\documentclass[oneside]{amsart}
\usepackage[left=2cm,top=2cm,right=3cm,bottom=1cm, footskip=.5cm]{geometry}
\usepackage{amsmath}
\usepackage{graphicx}
\usepackage{tikz}
\usepackage{fancyvrb}
\usepackage{pdfpages}
\usepackage[makeroom]{cancel}
\usepackage{amssymb} 
\usepackage[parfill]{parskip}
\usepackage{mathtools}
\usepackage{hyperref}
\usepackage{tikz-cd}
\usepackage{setspace}
\usepackage{amsthm}
\usepackage{cite}

\newtheoremstyle{colon}%
{}
{}
{\itshape}
{}
{\bfseries}
{:}
{ }
{}

\theoremstyle{colon}

\newtheorem{lemma}{Lemma}[section]
\newtheorem{theorem}[lemma]{Theorem}

\newtheorem{corollary}[lemma]{Corollary}
\newtheorem{definition}[lemma]{Definition}
\newtheorem{example}[lemma]{Example}

\title{A criterion for a Hurewicz cofibration to be a Quillen cofibration}
\author{Andrew Ronan}
\date{}

\begin{document}

	\maketitle
	
	\begin{abstract}
		In this paper, we prove that $h$-cofibrations between $q$-cofibrant spaces are $q$-cofibrations. We also present a number of applications, including a pushout-product property for symmetrizable cofibrations, a local-to-global gluing lemma for $q$-cofibrations, a proof that $q$-fibrations between $q$-cofibrant spaces are $h$-fibrations, and an alternative proof of Waner's theorem on $G$-spaces with the $G$-homotopy type of a $G$-CW complex in fibre sequences. Moreover, all of the above generalises readily to the equivariant context, and so we work in the more general equivariant setting throughout.
	\end{abstract}

\tableofcontents

\section{Introduction}
	
	In this paper, we prove that $hG$-cofibrations between $qG$-cofibrant $G$-spaces are $qG$-cofibrations and discuss a number of applications. Paired with Illman's theorem that a $G$-manifold admits the structure of a $G$-CW complex, \cite[Corollary 7.2]{I83}, our result proves to be a powerful recognition theorem for $qG$-cofibrations, as we will see in our applications. 
	
	At first glance, our result might seem surprising - after all, $hG$-cofibrations can often be determined by local information. For example, the inclusion of any subspace, $A$, admitting a neighbourhood homeomorphic to $A \times(-1,1)$, with $A$ corresponding to $A \times \{0\}$, is an $hG$-cofibration. On the other hand, the data of a $qG$-cofibration seems to require global information in the sense of a $G$-cell complex connecting the domain and codomain. Of course, $qG$-cofibrations are only defined as retracts of $G$-cell complexes which is perhaps the first sign of this intuition breaking down. For another perspective, note that we are in a situation where we have two model structures, the $hG$ and $qG$ model structures, satisfying the relations $\mathcal{W}_h \subset \mathcal{W}_q$ and $\mathcal{F}_h \subset \mathcal{F}_q$ between weak equivalences and fibrations. In such a situation we can form a mixed model structure as demonstrated by Cole in \cite[Theorem 2.1]{C06}, which in this case we call the $mG$-model structure. It is a general fact about mixed model structures that $h$-cofibrations between $m$-cofibrant objects are $m$-cofibrations, \cite[Proposition 3.11]{C06}. However, it is not true in general that an $h$-cofibration between $q$-cofibrant objects is a $q$-cofibration, as the following example shows:

	\begin{example}
		By \cite[Theorem 4.3]{AB25}, we have two model structures on the category of sets given by the triples of weak equivalences, cofibrations and fibrations, $(\mathcal{W}_h, \mathcal{C}_h, \mathcal{F}_h)$ and $(\mathcal{W}_q, \mathcal{C}_q, \mathcal{F}_q)$, defined as follows. We let both $\mathcal{W}_h$ and $\mathcal{W}_q$ be the class of all maps between sets. We define $C_h$ to be the class of bijections and $C_q$ to be the class of injections. We define $F_h$ to be the class of all maps between sets and $F_q$ to be the class of all surjections. We therefore have that $\mathcal{W}_h \subset \mathcal{W}_q, \mathcal{C}_h \subset \mathcal{C}_q$ and $\mathcal{F}_q \subset \mathcal{F}_h$. Then an $h$-fibration between $q$-fibrant objects is simply a map between non-empty sets, which need not be a $q$-fibration (that is, surjective). These model structures define dual model structures on the category $\textbf{Set}^{op}$ given by $(\mathcal{W}_h^{op}, \mathcal{C}_h^{op}, \mathcal{F}_h^{op})$ and $(\mathcal{W}_q^{op}, \mathcal{C}_q^{op}, \mathcal{F}_q^{op})$ where $\mathcal{W}_h^{op} = \mathcal{W}_h$, $\mathcal{C}_h^{op} = \mathcal{F}_h$, $\mathcal{F}_h^{op} = \mathcal{C}_h$, $\mathcal{W}_q^{op} = \mathcal{W}_q$, $\mathcal{C}_q^{op} = \mathcal{F}_q$ and $\mathcal{F}_q^{op} = \mathcal{C}_q$. Therefore, $\mathcal{W}_h^{op} \subset \mathcal{W}_q^{op}$ and $\mathcal{F}_h^{op} \subset \mathcal{F}_q^{op}$. However, an $h$-cofibration between $q$-cofibrant sets in $\textbf{Set}^{op}$ is dual to an $h$-fibration between $q$-fibrant sets in $\textbf{Set}$, which is not necessarily a $q$-fibration. So, an $h$-cofibration between $q$-cofibrant sets in $\textbf{Set}^{op}$ is not necessarily a $q$-cofibration.
	\end{example}

Our theory works just as well using the $\mathcal{C}$-model structure of \cite[Proposition B.7]{S18}, which is a generalisation of the $qG$-model structure, so we work with the $\mathcal{C}$-model structure throughout. An early application is showing that a $G$-equivariant open subspace of a $\mathcal{C}$-cofibrant space is $\mathcal{C}$-cofibrant, and this leads to the following sharpening of our main result:	
	
	\begin{theorem} \label{hco71}
		If $i: A \to B$ is an $hG$-cofibration and $B$ is $\mathcal{C}$-cofibrant, then $A$ is $\mathcal{C}$-cofibrant and $i$ is a $\mathcal{C}$-cofibration.	
	\end{theorem}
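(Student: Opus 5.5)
The plan is to deduce Theorem \ref{hco71} from two results the paper establishes before it: the main theorem, that an $hG$-cofibration between $\mathcal{C}$-cofibrant $G$-spaces is a $\mathcal{C}$-cofibration, and the application stated in the introduction, that a $G$-equivariant open subspace of a $\mathcal{C}$-cofibrant space is $\mathcal{C}$-cofibrant. Given these, it suffices to show that $A$ is $\mathcal{C}$-cofibrant; the main theorem applied to $i$ then shows that $i$ is a $\mathcal{C}$-cofibration.

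To see that $A$ is $\mathcal{C}$-cofibrant, I would exhibit $A$ as a $G$-homotopy retract of an open $G$-subspace of $B$. Since $i$ is an $hG$-cofibration, it is a closed inclusion, and there is a $G$-equivariant Str\o m presentation. That is, we have a $G$-invariant map $u: B \to I$ with $u^{-1}(0) = A$, together with a $G$-homotopy $h: B \times I \to B$ rel $A$ such that $h_0 = \mathrm{id}$ and $h(b,t) \in A$ whenever $t > u(b)$. Set $U = u^{-1}([0,1))$. This is an open $G$-invariant neighbourhood of $A$, and $r(b) = h(b,1)$ defines a $G$-retraction $r: U \to A$. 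Moreover $h$ restricted to $U \times I$ is a homotopy from $\mathrm{id}_U$ to $i r$ rel $A$, although it lands in $B$ rather than in $U$. By the open-subspace result, $U$ is $\mathcal{C}$-cofibrant.

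Next I would pass from $U$ to $A$. Since $A$ is a retract of $U$ via $r$, and the inclusion $A \to U$ has left inverse $r$, $A$ is a retract of $U$ in the category of $G$-spaces. Retracts of cofibrant objects are cofibrant: the map $\emptyset \to A$ is a retract of $\emptyset \to U$ in the arrow category. So $A$ is $\mathcal{C}$-cofibrant, and no homotopy information is needed for this step.

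The main obstacle is the equivariant Str\o m characterization, namely producing a $G$-invariant $u$ and a $G$-equivariant $h$. I would obtain these by applying the standard argument to a $G$-retraction $B \times I \to B \times \{0\} \cup A \times I$, which exists because $i$ is an $hG$-cofibration. Writing the retraction as $(h, \phi)$, set $u(b) = \sup_t (t - \phi(b,t))$. Both $u$ and $h$ are equivariant because the retraction is and $G$ acts trivially on $I$. Everything else is formal once the open-subspace result is available.
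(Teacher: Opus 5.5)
Your proposal is correct and is essentially the paper's own proof. The paper takes a $G$-NDR-pair $(H,\lambda)$ for $(B,A)$ and notes that $A$ is a retract of the open $G$-subspace $\lambda^{-1}([0,1))$, which is $\mathcal{C}$-cofibrant by Corollary \ref{hco11}; it then applies Lemma \ref{hco2} (with $C=\emptyset$) to $i$. The only difference is that you derive the equivariant Str\o m/NDR data from a $G$-retraction $B\times I\to B\times\{0\}\cup A\times I$, whereas the paper takes that data for granted.
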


In fact, some of our applications do need the full generality of the $\mathcal{C}$-model structure. For example, in Section \ref{hco81} we let $\mathcal{C}$ be the set of isotropy groups of $[0,1]^n$, viewed as a $\Sigma_n$-space, to obtain the following pushout-product property of symmetrizable cofibrations:
	
	\begin{theorem}
		Let $j: A \to B$ be a symmetrizable cofibration in a tensored and cotensored closed symmetric monoidal topological model category and let $i: C \to D$ be a $q$-cofibration of spaces.  Then the pushout product map:	
		\[
		j \widehat{\times} i : A \times D \cup_{A \times C} B \times C \to B \times D
		\]
		
		is a symmetrizable cofibration, which is a symmetrizable acyclic cofibration if either $i$ is acyclic or $j$ is a symmetrizable acyclic cofibration.
	\end{theorem}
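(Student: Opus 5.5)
The plan is to reduce to generating cells, identify the iterated pushout product of a cell with a concrete $\Sigma_n$-equivariant boundary inclusion, and then use Theorem \ref{hco71} to recognise that inclusion as a $\mathcal{C}$-cofibration for the family $\mathcal{C}$ of isotropy groups of $[0,1]^n$.

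\textbf{Setup.} I use the standard definition: $j$ is a symmetrizable (acyclic) cofibration if for every $n$ the map $j^{\square n}/\Sigma_n$ is an (acyclic) cofibration. Here $j^{\square n}$ is the $n$-fold iterated pushout product, with its $\Sigma_n$-action. I will show that $(j\widehat{\times} i)^{\square n}/\Sigma_n$ is an (acyclic) cofibration for all $n$.

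\textbf{Step 1: reduce to generating cells.} I take from the literature (Gorchinskiy--Guletskii, Pavlov--Scholbach) that the class of maps $i$ for which the conclusion holds is closed under cobase change, transfinite composition and retracts. Granting this, it suffices to treat $i\colon S^{k-1}\to D^k$, and $i\colon D^k\to D^k\times[0,1]$ in the acyclic case.

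\textbf{Step 2: rewrite the iterated pushout product.} By symmetry and associativity of the pushout product there is a $\Sigma_n$-equivariant isomorphism
\[
(j\widehat{\times} i)^{\square n}\cong j^{\square n}\,\widehat{\times}\, i^{\square n},
\]
where $\Sigma_n$ acts diagonally. For $i\colon S^{k-1}\to D^k$, identify $D^k\cong[0,1]^k$. Then $i^{\square n}$ is the boundary inclusion
\[
\partial\big(([0,1]^n)^k\big)\to([0,1]^n)^k,
\]
with $\Sigma_n$ permuting coordinates in each of the $k$ factors.

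\textbf{Step 3: $i^{\square n}$ is a $\mathcal{C}$-cofibration.} Let $\mathcal{C}$ be the set of isotropy groups of $[0,1]^n$. These are the Young subgroups $\Sigma_{n_1}\times\dots\times\Sigma_{n_r}$, and they are closed under intersection. Isotropy groups of a product are intersections of isotropy groups of the factors, so $([0,1]^n)^k$ also has isotropy in $\mathcal{C}$.
\begin{itemize}
\item The boundary inclusion is an $h\Sigma_n$-cofibration, because it has an equivariant collar given by radial pushing from the fixed centre point.
\item $([0,1]^n)^k$ is $\mathcal{C}$-cofibrant, since it is a $\Sigma_n$-manifold with corners and is triangulable equivariantly (Illman, or the explicit simplicial subdivision by coordinate orderings).
\end{itemize}
Theorem \ref{hco71} then gives that $i^{\square n}$ is a $\mathcal{C}$-cofibration of $\Sigma_n$-spaces. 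In the acyclic case I run the same argument and additionally note that the map is a homotopy equivalence.

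\textbf{Step 4: main obstacle.} The key lemma is: if $f$ is a $\Sigma_n$-map in the model category with $f/H$ a cofibration for every $H\in\mathcal{C}$, and $g$ is a $\mathcal{C}$-cofibration of $\Sigma_n$-spaces, then $(f\widehat{\times} g)/\Sigma_n$ is a cofibration, and it is acyclic if either input is. I prove this by cell induction on $g$. Quotienting by $\Sigma_n$ commutes with the pushouts and colimits involved, so it suffices to check a generator $g=\Sigma_n/H\times(S^{m-1}\to D^m)$ with $H\in\mathcal{C}$. For such $g$,
\[
(f\widehat{\times} g)/\Sigma_n\cong (f/H)\,\widehat{\times}\,(S^{m-1}\to D^m).
\]
This is a cofibration by the topological (SM7-type) pushout-product axiom of the tensored model category.

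\textbf{Step 5: apply the lemma to $f=j^{\square n}$.} I must check that $j^{\square n}/H$ is a cofibration for every Young subgroup $H=\Sigma_{n_1}\times\dots\times\Sigma_{n_r}$. The factorisation
\[
j^{\square n}/H\cong (j^{\square n_1}/\Sigma_{n_1})\,\square\cdots\square\,(j^{\square n_r}/\Sigma_{n_r})
\]
holds because quotients commute with the pushout product in a closed monoidal category. Each factor is a cofibration by symmetrizability, so the pushout-product axiom gives a cofibration, acyclic if $j$ is symmetrizable acyclic.

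\textbf{Where care is needed.} Verifying this factorisation of $j^{\square n}/H$ and the acyclicity bookkeeping is where I expect most of the work. The acyclic case with $i$ acyclic also needs Step 3 to produce an acyclic $\mathcal{C}$-cofibration; Theorem \ref{hco71} combined with the homotopy equivalence from Step 3 should give this.
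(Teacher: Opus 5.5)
Your proposal is correct and is essentially the paper's proof. It uses the same reduction to generating cells, the same recognition of $i^{\widehat{\times} n}$ as a $\mathcal{C}$-cofibration for the Young subgroups via the main theorem, and the same splitting of $j^{\widehat{\boxtimes} n}/H$ into a pushout product of symmetric quotients. Your Step 4 cell induction on orbits is the adjoint form of the paper's argument, which uses the tensor adjunction to check that the $H$-fixed points $\widehat{Map}(j^{\widehat{\boxtimes} n}/H,p)$ are $q$-(acyclic) fibrations. Your acyclic-generator step fills in a case the paper leaves implicit; it needs the homotopy equivalence to be $\Sigma_n$-equivariant, which it is.
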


In Section \ref{hco82}, we derive local-to-global gluing results for $\mathcal{C}$-fibrations and $\mathcal{C}$-cofibrations. We previously showed, in Lemma \ref{hco10}, that if $i: A \to B$ is a $\mathcal{C}$-cofibration and $U \subset B$ is a $G$-equivariant open subspace, then $i: A \cap U \to U$ is a $\mathcal{C}$-cofibration.  One consequence is the following local-to-global gluing result for $\mathcal{C}$-cofibrations:

\begin{theorem} \label{hco73}
	If $i: A \to B$ is a map of $G$-spaces and there is a $G$-numerable open cover $\mathcal{U}$ of $B$ such that for every $U \in \mathcal{U}$, $i|_{A \cap U}: A \cap U \to U$ is a $\mathcal{C}$-cofibration, then $i$ is a $\mathcal{C}$-cofibration.
\end{theorem}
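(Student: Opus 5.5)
The plan is to reduce the statement to the main result, that an $hG$-cofibration between $\mathcal{C}$-cofibrant $G$-spaces is a $\mathcal{C}$-cofibration. This means establishing three things: that $A$ and $B$ are $\mathcal{C}$-cofibrant, and that $i$ is an $hG$-cofibration. Once we have these, Theorem \ref{hco71} gives the conclusion directly. All three should be obtained by gluing local information over the $G$-numerable cover $\mathcal{U}$.

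\textbf{Step 1: $B$ is $\mathcal{C}$-cofibrant.} For each $U \in \mathcal{U}$, the map $A \cap U \to U$ is a $\mathcal{C}$-cofibration. If the domain $A\cap U$ were $\mathcal{C}$-cofibrant, then each $U$ would be $\mathcal{C}$-cofibrant. The hypothesis alone does not give this, so instead I would prove directly the weaker fact that $i$ satisfies the left lifting property against acyclic $\mathcal{C}$-fibrations $p: E \to X$.

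\textbf{Step 2: the lifting problem.} Take a square $A \to E$, $B \to X$. Restricted to each $U$, it has a lift $U \to E$, since $A\cap U \to U$ is a $\mathcal{C}$-cofibration. The task is to patch these local lifts into a global one. Following the standard Dold-type argument, I would:
\begin{itemize}
\item choose a $G$-invariant partition of unity subordinate to $\mathcal{U}$;
\item pass to a countable, locally finite refinement indexed by $n$, each member a disjoint union of sets contained in members of $\mathcal{U}$ (Dold's trick, done equivariantly);
\item build the lift inductively over the increasing open sets $V_n = \bigcup_{k\le n} W_k$.
\end{itemize}
The mismatch between lifts on overlaps is handled using the fact that acyclic $\mathcal{C}$-fibrations have contractible fibres in the appropriate fixed-point sense. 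The cleaner route is to replace lifting by the section-extension property. Set $F = B\times_X E \to B$. This is an acyclic $\mathcal{C}$-fibration over $B$, and we need to extend a given section over $A$ to a section over all of $B$.

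\textbf{Step 3: the main obstacle.} The hard part is gluing sections defined on overlapping opens while keeping them fixed on $A$. To handle it I would prove the relative statement: sections over $U \cap V$ that agree on $A$ are fibrewise homotopic rel $A$. This follows from lifting against $(A\cap U)\times I \cup U\times \partial I \to U\times I$, which is a $\mathcal{C}$-cofibration by the pushout-product property. The homotopy is then applied using the bump function $\phi_n$ to interpolate between the old and new sections, exactly as in the proof that fibrewise properties are local over numerable covers (Dold's theorem). The inductive limit is continuous by local finiteness.

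\textbf{Alternative route.} If this direct approach becomes messy, I would fall back on the statement's intended route. First show $i$ is an $hG$-cofibration using the local-to-global property of $hG$-cofibrations over numerable covers (a Dold/Lillig-type union theorem). Then show $B$ is $\mathcal{C}$-cofibrant by applying the same patching argument with $A$ replaced by $\emptyset \cup A$ and composing. Finally conclude via Theorem \ref{hco71}.
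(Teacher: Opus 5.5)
\textbf{There is a genuine gap.} Your direct route in Steps 2--3 is the right strategy and is essentially the paper's. The paper glues local lifts over the numerable cover using uniqueness up to homotopy rel $i$ and rel $p$ (Lemma \ref{hco19}, via Zorn's lemma rather than Dold's countable refinement, a cosmetic difference). The gap is in the one step that carries real content: uniqueness of lifts on overlaps.

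\textbf{Where Step 3 fails.} The two sections you must compare are the local lift over some $U$ and the section already built over $V$ (or $V_{n-1}$). The second is defined only over $U\cap V$. So lifting against $(A\cap U)\times I\cup U\times\partial I\to U\times I$ is unavailable, since you have no map on $U\times\partial I$. What you actually need is that $(A\cap U\cap V)\times I\cup(U\cap V)\times\partial I\to(U\cap V)\times I$ is a $\mathcal{C}$-cofibration. By the pushout-product property, this amounts to $A\cap U\cap V\to U\cap V$ being a $\mathcal{C}$-cofibration.

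\textbf{The missing ingredient.} That is exactly Lemma \ref{hco10}: restricting a $\mathcal{C}$-cofibration to a $G$-invariant open subspace of its target gives a $\mathcal{C}$-cofibration. This is not formal. Unlike fibrations, which restrict by pullback, cofibrations do not obviously restrict to open subsets. In the paper it is a substantive consequence of the main result. One reduces to a relative $\mathcal{C}$-cell complex and observes that intersecting with the open set turns each cell attachment into a pushout of $(G/H\times S^{n-1})\cap W\to(G/H\times D^n)\cap W$. This is a boundary inclusion of a smooth $G$-manifold, hence an $hG$-cofibration between $\mathcal{C}$-cofibrant spaces, hence a $\mathcal{C}$-cofibration by Lemma \ref{hco2} (Example \ref{hco5}). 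Your remark that acyclic fibrations have contractible fibres does not substitute for this, because it gives no relative homotopies over an open set not known to be suitably cofibrant. Once you have Lemma \ref{hco10}, your Step 3 works with $U\cap V$ in place of $U$, and the rest of your patching is fine.

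\textbf{The opening plan and fallback route cannot work.} The statement does not imply that $A$ or $B$ is $\mathcal{C}$-cofibrant. Take $i=1_B$ for any $B$ that is not $\mathcal{C}$-cofibrant, with $\mathcal{U}=\{B\}$: the hypotheses hold, yet $B$ is not cofibrant. So Theorem \ref{hco71} cannot be the endpoint, and no patching argument can show $B$ is $\mathcal{C}$-cofibrant. The first half of your fallback is correct: $i$ is an $hG$-cofibration by Theorem \ref{hco57}, since $\mathcal{C}$-cofibrations are $hG$-cofibrations. But without cofibrancy of the target it does not lead to the conclusion.
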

	
	In Section \ref{hco83}, we kick off a new chain of applications, by considering $n$-fold products and the diagonal map. In particular, we derive the following equivariant analogue of \cite[Corollary III.2]{DE72}:
	
	\begin{theorem} \label{hco74} If $X$ is $qG$-cofibrant, then the diagonal map $\Delta: X \to X^n$ is a $q(G \times \Sigma_n)$-cofibration.
	\end{theorem}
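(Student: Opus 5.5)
The plan is to apply Theorem \ref{hco71} to the diagonal $\Delta: X \to X^n$, regarded as a $(G \times \Sigma_n)$-equivariant map. Here $G$ acts diagonally on $X^n$ and $\Sigma_n$ permutes the factors; on $X$, the group $\Sigma_n$ acts trivially. Theorem \ref{hco71}, applied to the group $G\times\Sigma_n$ and the family $\mathcal{C}$ of all closed subgroups, reduces the claim to two facts:
\begin{enumerate}
\item $X^n$ is $q(G\times\Sigma_n)$-cofibrant;
\item $\Delta$ is an $h(G\times\Sigma_n)$-cofibration.
\end{enumerate}
Cofibrancy of the domain $X$ then comes for free from the theorem, although it is also clear directly: $X$ is a retract of a $G$-cell complex with trivial $\Sigma_n$-action, hence of a $(G\times\Sigma_n)$-cell complex.

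For (1), reduce to the case where $X$ is a $G$-cell complex. Retracts are preserved by $(-)^n$, since $X^n$ is then a $(G\times\Sigma_n)$-retract of the $n$-fold power of the complex. For a $G$-cell complex $X$, filter $X^n$ by products of skeleta. The standard argument, via the equivariant triangulation of products of orbits and cells (Illman), shows that $(G/H\times D^k)^n$ with its $\Sigma_n$-permutation action admits a $(G\times\Sigma_n)$-CW structure. Here I would use the fact quoted in the introduction that smooth $G$-manifolds (with corners) admit $G$-CW structures, applied to the compact $(G\times\Sigma_n)$-manifold $(G/H)^n\times (D^k)^n$ together with its boundary. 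Gluing these cells along the filtration gives $X^n$ as a $(G\times\Sigma_n)$-cell complex, up to the usual care with compactly generated products.

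For (2), the diagonal of $X$ is an $h$-cofibration whenever $X$ is, say, a $G$-CW complex. For an equivariant version I would again work cell by cell: $\Delta$ is a retract of the diagonal of a $G$-cell complex $Y$. By the pushout-product property for $h$-cofibrations and induction on cells, it suffices to show that the diagonal of $G/H\times D^k$ sits inside its $n$-fold power as an equivariant sub-CW complex. That is, we need to choose the triangulation in (1) so that the diagonal is a subcomplex. This is possible by Illman's relative triangulation of a $(G\times\Sigma_n)$-manifold containing the closed invariant submanifold $\Delta$.

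Alternatively, and more cleanly, one can invoke the neighbourhood-deformation characterization of $h$-cofibrations. Choose an equivariant metric-type function $u:X^n\to[0,1]$ with $u^{-1}(0)=\Delta$, for instance built from a $G$-invariant halo of the diagonal in a CW complex, which is $\Sigma_n$-invariant by symmetrization. Then construct a $(G\times\Sigma_n)$-homotopy retracting a neighbourhood onto $\Delta$ by averaging coordinates in local charts.

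I expect (2) to be the main obstacle. Symmetric averaging has no global meaning in a general $X$, so the honest route is the relative equivariant triangulation. There one must check carefully that the cellular structures on the $n$-fold powers of cells are compatible with the attaching maps, so that the subcomplex structure on the diagonal is preserved through the cell induction. I would try first to arrange everything at the level of the universal case $(D^k)^n$ with $\Sigma_n$-action, where the diagonal is a linear subspace and the required neighbourhood deformation retraction is explicit: linear projection onto the diagonal, which is $\Sigma_n$- and $O(k)$-equivariant. I would then propagate this up the skeletal filtration using the equivariant gluing lemma for $h$-cofibrations.
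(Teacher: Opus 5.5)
\textbf{Where your outline matches the paper.} Your architecture is the paper's. You apply Theorem \ref{hco71} (the paper invokes the equivalent Lemma \ref{hco2}) to the family of all closed subgroups of $G\times\Sigma_n$. You get cofibrancy of $X^n$ by attaching $\Sigma_n$-orbits of product cells, and $h$-cofibrancy of $\Delta$ by induction over cells. Part (1) is fine up to bookkeeping. A tuple of cells that are not all equal contributes $\Sigma_n\times_K\prod_i(G/H_i\times D^{k_i})$, with $K$ its stabiliser, not a copy of $(G/H\times D^k)^n$. Also, no relative triangulation is needed: the boundary inclusion of such a manifold is an $h$-cofibration between spaces that are cofibrant by Illman's \emph{absolute} theorem, so Theorem \ref{hco71} applies. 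This is what Lemmas \ref{hco59} and \ref{hco17} do.

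\textbf{The gap is in (2), where you expected it.} Write $X=Y\cup_\alpha M$ with $M=G/H\times D^k$, and $W=\bigcup_j X^{j-1}\times Y\times X^{n-j}$. The inductive step needs two facts. (i) $Y^n\to W$ is an $h(G\times\Sigma_n)$-cofibration; the paper gets this from Lemma \ref{hco56} and an equivariant Str{\o}m lemma. (ii) $\phi\colon \Delta_M\cup\partial(M^n)\to M^n$ is an $h(G\times\Sigma_n)$-cofibration, i.e.\ the diagonal \emph{together with} the boundary, not the diagonal alone. Fact (ii) is Lemma \ref{hco58}, proved from Kankaanrinta's results on the diagonal, collars, Dold's gluing Theorem \ref{hco57}, and the simplicial model $(\Delta^1)^n$ at the corner.

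Your justification of (ii) does not work as stated.
\begin{itemize}
\item The paper has only Illman's absolute theorem, and a relative version would need $\Delta_M$ to be a neat submanifold. In fact $\Delta_M$ meets $\partial(M^n)$ only inside the deepest corner stratum $(\partial M)^n$, which is exactly the delicate locus.
\item Linear projection onto the diagonal moves points of $\partial(M^n)$, so it gives no NDR-pair for this pair.
\item It also says nothing about the nonlinear factor $\Delta_{G/H}\subset(G/H)^n$.
\end{itemize}

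\textbf{How to complete your linear-model idea.} Let $V=(\mathbb{R}^k)^n$ with diagonal $\Delta_V$. The $\Sigma_n$-equivariant radial homeomorphisms $(D^k)^n\cong D(V)\cong D(\Delta_V)\times D(\Delta_V^{\perp})$ preserve the diagonal and the boundary. They turn $\phi$ into the pushout-product of $S(\Delta_V)\to D(\Delta_V)$ with
\[
(\Delta_{G/H}\times\{0\})\sqcup((G/H)^n\times S(\Delta_V^{\perp}))\to(G/H)^n\times D(\Delta_V^{\perp}).
\]
This reduces (ii) to the slice theorem for the orbit $\Delta_{G/H}$.

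\textbf{Passing to the colimit.} This step is not handled by a gluing lemma: Dold's gluing concerns numerable covers, and a colimit of $h$-cofibrations need not be one. You need the latching maps $X\cup_Y Y^n\to X^n$ to be $h$-cofibrations. They are, being composites of pushouts of the maps in (i) and (ii). Alternatively, as in the paper, choose NDR-pairs compatibly via Lemmas \ref{hco51} and \ref{hco52}. Conversely, the compatibility of cell structures with attaching maps that you worry about is never needed; only closure of $h$-cofibrations under pushouts and composites is used.
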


In Section \ref{hco84}, we use our results from Section \ref{hco83} to help prove that a $q$-fibration between $q$-cofibrant spaces is an $h$-fibration, or more generally:

\begin{theorem} \label{hco75}
	Suppose that $\mathcal{C}$ is closed under conjugacy and intersections.	If $p: E \to B$ is a $\mathcal{C}$-fibration between $\mathcal{C}$-cofibrant spaces, then $p$ is an $hG$-fibration.
\end{theorem}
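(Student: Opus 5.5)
Our approach is to reduce the claim to the path-lifting characterisation of $hG$-fibrations. The map $p: E \to B$ is an $hG$-fibration exactly when it has the $G$-homotopy lifting property for all $G$-spaces. This in turn holds exactly when the map
\[
\rho: E^{I} \to E \times_B B^{I}, \qquad \gamma \mapsto (\gamma(0), p\circ\gamma),
\]
admits a $G$-equivariant section, i.e.\ a $G$-equivariant Hurewicz connection. So it suffices to produce a $G$-map $s: E\times_B B^I \to E^I$ with $\rho s = \mathrm{id}$.

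We will get $s$ as a lift in a square. The target is $\rho$, and the source is an inclusion built from a piece of $E\times_B B^I$ on which a section is obvious. On the subspace of pairs $(e,\omega)$ with $\omega$ constant, we set $s(e,\omega)=c_e$.

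The key input is that $\rho$ is a $\mathcal{C}$-fibration which is also acyclic. The map $E^I \to E\times_B B^I$ is the pullback-power (cotensor) of $p$ along the $q$-cofibration $\{0\}\to I$, which is acyclic. Since the $\mathcal{C}$-model structure is topological, $\rho$ is an acyclic $\mathcal{C}$-fibration. Hence $\rho$ has the right lifting property against $\mathcal{C}$-cofibrations.

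It therefore remains to show that the inclusion $j: E\times_B B \hookrightarrow E\times_B B^I$, via constant paths, is a $\mathcal{C}$-cofibration. Here $E\times_B B\cong E$. Once this holds, lifting in the square
\[
\begin{tikzcd} E \arrow[r,"c"] \arrow[d,"j"'] & E^I \arrow[d,"\rho"] \\ E\times_B B^I \arrow[r,equal] & E\times_B B^I \end{tikzcd}
\]
gives the required section.

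To prove $j$ is a $\mathcal{C}$-cofibration we use Theorem \ref{hco71}. It suffices that $j$ is an $hG$-cofibration and that $E\times_B B^I$ is $\mathcal{C}$-cofibrant.

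\textbf{$j$ is an $hG$-cofibration.} The constant-path inclusion $B\to B^I$ is an $hG$-cofibration; it is a strong deformation retract with a Strøm-type function, and this does not depend on $B$. Pulling back along $E\times_B B^I\to B^I$ preserves this structure. The Strøm pair data (function $u$ and homotopy) are defined fibrewise in terms of the path coordinate, so the data pull back.

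\textbf{$E\times_B B^I$ is $\mathcal{C}$-cofibrant.} This is the main obstacle. The space $E\times_B B^I$ is homotopy equivalent to $E$, but a homotopy equivalence alone does not give cofibrancy. We would write it as the pullback of $E\times B^I \to B\times B$, via $(e,\omega)\mapsto (p(e),\omega(0))$, along the diagonal $\Delta: B\to B\times B$. Generalising Theorem \ref{hco74}, the diagonal is a $\mathcal{C}$-cofibration for $\mathcal{C}$-cofibrant $B$; this is where closure of $\mathcal{C}$ under conjugacy and intersections should enter, through the isotropy of products. It is also an $hG$-cofibration.

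Alternatively, $E\times_B B^I$ is a $G$-invariant retract-like subspace of $E\times B^I$, cut out as the preimage of the diagonal. We would then show:
\begin{itemize}
\item $E\times B^I$ is $\mathcal{C}$-cofibrant, using that $B^I$ has the homotopy type of $B$. This needs care: we would instead replace $B^I$ by Moore-type paths of $B$, or use that $B^I\simeq B$ together with the open-subspace and Theorem \ref{hco71} machinery.
\item The preimage of the diagonal under a $\mathcal{C}$-fibration is controlled by Theorem \ref{hco71} applied to an $hG$-cofibration into $E\times B^I$.
\end{itemize}

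If this route fails, the fallback is to avoid needing cofibrancy of the total space. Instead we would build the lift directly over a $\mathcal{C}$-cell structure on $B$ and glue locally using Theorem \ref{hco73}-style numerable arguments.
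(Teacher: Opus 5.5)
Your first two steps are sound. The map $p$ is an $hG$-fibration iff $\rho\colon E^I\to E\times_B B^I$ has a $G$-section, and $\rho$ is an acyclic $\mathcal{C}$-fibration because the $\mathcal{C}$-model structure is topological. The normalisation through $j$ is not even needed, since any section is a connection. But the argument then stands or falls with $E\times_B B^I$ being $\mathcal{C}$-cofibrant, and that is not just the hard step: it is false in general.

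To see this, suppose $X$ is a retract of a cell complex $Y$ via $X\xrightarrow{i}Y\xrightarrow{r}X$. Then $i$ embeds every compact $K\subset X$ into a finite subcomplex of $Y$, so compact subsets of $\mathcal{C}$-cofibrant spaces are finite-dimensional. Now take $G$ trivial, $\mathcal{C}=\{e\}$, $E=B=I$ and $p=\mathrm{id}$. Then $E\times_B B^I\cong I^I$, which contains the Hilbert cube $\{\sum_n a_n\phi_n : 0\le a_n\le 1/n\}$ for bump functions $\phi_n$ with disjoint supports.

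This has several consequences for your plan. Your map $j$ can never be a $\mathcal{C}$-cofibration: its source $E$ is cofibrant, so its target would be too. The same objection rules out $E\times B^I$ and Moore-path variants. Pulling back along the diagonal does not create cofibrancy. Finally, $E\times_B B^I\simeq E$ only gives $m\mathcal{C}$-cofibrancy, and acyclic $\mathcal{C}$-fibrations need not lift against $m\mathcal{C}$-cofibrations. So the $\mathcal{C}$-model structure gives you no global lift against $\rho$.

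The missing idea is to replace the infinite-dimensional path parameter by a cofibrant one, locally, using that $B$ is equivariantly LEC. The paper proceeds as follows.
\begin{enumerate}
\item Take an NDR function $\lambda$ for the diagonal of $B$ (Theorem \ref{hco15}).
\item By Lemma \ref{hco14}, each $b$ with isotropy $H$ has an $H$-invariant open $U\ni b$ and an $H$-homotopy $K\colon U\times U\times I\to B$ between the two projections, stationary after time $\lambda(u,v)$.
\item Lift $K\circ(1\times p\times 1)$ against $p$ with initial map $\pi_2$. This is legitimate because $U\times p^{-1}(U)$ is $(\mathcal{C}\cap H)$-cofibrant, using open subspaces (Corollary \ref{hco11}), restriction (Lemma \ref{hco17}) and products; products are where closure under intersections enters.
\item The resulting slicing function $\tilde K$ gives lifts $\tilde L(x,t)=\tilde K(L(x,t),f(x),\lambda(L(x,t),pf(x)))$, so $p^{-1}(U)\to U$ is an $hH$-fibration.
\item The slice theorem and Lemma \ref{hco18} upgrade this to an $hG$-fibration over a $G$-neighbourhood $G\times_H S$ of $b$.
\item Dold-type gluing over the paracompact $B$ finishes.
\end{enumerate}
Your fallback has the right local-to-global shape. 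However, without this slicing-function construction and the passage from $H$- to $G$-equivariance, it does not say how the local lifts arise.

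Separately, your claim that the constant-path inclusion $B\to B^I$ is an $hG$-cofibration independently of $B$ is not justified. You need a function vanishing exactly on constant paths, and without a metric one builds it from that same diagonal NDR function.
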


Non-equivariantly, and when both $E$ and $B$ are CW-complexes, Theorem \ref{hco75} is due to Cauty, \cite{C92}, Steinberger and West, \cite[Theorem 1]{SW84}. However, the generalisation to when $E$ and $B$ are $q$-cofibrant, or indeed cell complexes, is a new consequence of our result that open subspaces of $q$-cofibrant spaces are $q$-cofibrant. Equivariantly, and when $E$ and $B$ are both $G$-CW complexes, the result is due to Gevorgyan and Jimenez, \cite[Theorem 6]{GJ19}.

We close in Section \ref{hco85} with one of the most interesting applications of our theory, namely a slick proof of Waner's theorem and its converse:

\begin{theorem} \label{hco76}
	Let $\mathcal{C}$ be closed under conjugacy and intersections. Let $p: E \to B$ be an $hG$-fibration such that $B$ is $m \mathcal{C}$-cofibrant. Then $E$ is $m \mathcal{C}$-cofibrant iff for every $H$ and $b \in B$ with isotropy group $H$, $p^{-1}(b)$ is $m (H \cap \mathcal{C})$-cofibrant. \end{theorem}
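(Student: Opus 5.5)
Throughout, I take the $m\mathcal{C}$-model structure to be Cole's mixed structure built from the $hG$- and $\mathcal{C}$-structures. So a $G$-space is $m\mathcal{C}$-cofibrant iff it is $G$-homotopy equivalent to a $\mathcal{C}$-cofibrant space, and I use this characterisation freely. The plan is to reduce Theorem \ref{hco76} to Theorem \ref{hco75} and the main result, Theorem \ref{hco71}, via the classical mechanism behind Waner's theorem: replace $B$ by a $\mathcal{C}$-cofibrant space and $p$ by a $\mathcal{C}$-fibration, then compare.

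First, since $hG$-fibrations pull back and are invariant under $G$-homotopy equivalence of the base, I may assume $B$ is $\mathcal{C}$-cofibrant. Fix a $G$-homotopy equivalence $f: B' \to B$ with $B'$ $\mathcal{C}$-cofibrant. The pullback $f^*E \to E$ is then a $G$-homotopy equivalence, and the fibres over $b'$ and $f(b')$ are $H$-homotopy equivalent when the isotropy groups agree. Handling isotropy is routine: a point $b\in B$ with isotropy $H$ can be moved along the homotopy inverse to a point whose isotropy contains $H$, and $H$-homotopy invariance of fibres then applies.

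For the forward direction, suppose $E$ is $m\mathcal{C}$-cofibrant.
\begin{itemize}
\item Factor $p$ in the $\mathcal{C}$-model structure after replacing $E$ by a $\mathcal{C}$-cofibrant $E'$. I get a $\mathcal{C}$-fibration $p': E'' \to B$ with $E''$ $\mathcal{C}$-cofibrant, together with a weak equivalence $E \to E''$ over $B$.
\item By Theorem \ref{hco75}, $p'$ is an $hG$-fibration. So both $p$ and $p'$ are $hG$-fibrations between $m\mathcal{C}$-cofibrant spaces, and the map $E\to E''$ is a $\mathcal{C}$-equivalence between $m$-cofibrant objects, hence a $G$-homotopy equivalence.
\item By Dold's theorem, it is then a fibre homotopy equivalence. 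Hence for $b\in B$ with isotropy $H$, the fibres $p^{-1}(b)$ and $p'^{-1}(b)$ are $H$-homotopy equivalent.
\item It remains to see that the fibre of $p'$ is $(H\cap\mathcal{C})$-cofibrant up to homotopy. Here I use the main theorem. The inclusion of the orbit $Gb \cong G/H$ into $B$ should be an $hG$-cofibration after thickening: take a $G$-invariant open neighbourhood that $G$-deformation retracts onto $Gb$, which exists for $\mathcal{C}$-cofibrant $B$ by the cellular structure. Then $p'^{-1}(U)$ is $\mathcal{C}$-cofibrant, being open in a $\mathcal{C}$-cofibrant space by Lemma \ref{hco10}.
\item The space $p'^{-1}(U)$ is fibre-homotopy equivalent to $G\times_H p'^{-1}(b)$. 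Restricting to $H$ and using that $G\times_H F$ is $\mathcal{C}$-cofibrant iff $F$ is $(H\cap\mathcal{C})$-cofibrant then gives the claim.
\end{itemize}

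For the converse, suppose all fibres are $m(H\cap\mathcal{C})$-cofibrant. I would build $E$ up to homotopy cell by cell over $B$.
\begin{itemize}
\item Write $B$ as a retract of a $\mathcal{C}$-cell complex. By Theorem \ref{hco71}-type arguments, retracts are harmless, so I may take $B$ to be a cell complex with skeleta $B_n$.
\item Over a cell $G/K\times D^n$, the fibration is fibre-homotopy trivial, with total space $G\times_K (D^n\times F)$. This is $m\mathcal{C}$-cofibrant because $F$ is $m(K\cap\mathcal{C})$-cofibrant.
\item The restriction $p^{-1}(B_{n-1})\to p^{-1}(B_n)$ is an $hG$-cofibration, since $hG$-fibrations pull back $hG$-cofibrations to $hG$-cofibrations. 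It is a homotopy pushout of $m$-cofibrant pieces, so $m$-cofibrancy propagates up the skeleta.
\item At the colimit I use that $m$-cofibrant objects are closed under sequential colimits along $h$-cofibrations.
\end{itemize}

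The step I expect to be the main obstacle is the forward direction's passage from the total space to a single fibre: producing the invariant neighbourhood of an orbit and identifying its preimage with $G\times_H F$ up to $G$-homotopy. This is exactly where the new results, that open subspaces are cofibrant and Theorem \ref{hco71}, must do the work.
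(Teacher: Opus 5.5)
Your converse direction is essentially the paper's argument: reduce to a cell complex, trivialise over each cell up to fibre homotopy via Corollary~\ref{hco62}, use Lemma~\ref{hco63}, observe that an $hG$-cofibration between $m\mathcal{C}$-cofibrant spaces is an $m\mathcal{C}$-cofibration, and pass to the colimit. The first half of your forward direction also essentially matches the paper's proof of Theorem~\ref{hco54}, with Dold's theorem in place of right properness.

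\textbf{The gap is the final passage from total space to fibre.}
\begin{itemize}
\item What you actually obtain is that $p'^{-1}(U)$ is $\mathcal{C}$-cofibrant and $G$-homotopy equivalent to $G\times_H F$. So you only know that $G\times_H F$ is $m\mathcal{C}$-cofibrant, and you then apply a criterion about \emph{strict} cofibrancy.
\item The direction you need, ``$G\times_H F$ cofibrant $\Rightarrow$ $F$ cofibrant'', is not Lemma~\ref{hco63}, which gives the converse.
\item In the $m$-form you need, that implication is essentially a special case of the statement you are proving: the forward direction for the $hG$-fibration $G\times_H F\to G/H$ of Lemma~\ref{hco18}.
\item You cannot get it by passing to an $H$-invariant open neighbourhood of $F$ inside $G\times_H F$ either, because $m$-cofibrancy is not inherited by open subspaces. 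For example, the cone on the Warsaw circle is contractible, but the complement of its cone point is not of CW homotopy type.
\end{itemize}
Two smaller points. The existence of a $G$-neighbourhood $U$ that deformation retracts onto $Gb$ is only asserted. And although you announce that you use the main theorem, you never apply Theorem~\ref{hco71}; you only use Lemma~\ref{hco10}.

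\textbf{The missing idea} is to realise the fibre itself as the domain of an $h$-cofibration into a strictly $\mathcal{C}$-cofibrant space, so that Theorem~\ref{hco71} gives strict cofibrancy. Concretely:
\begin{itemize}
\item Restrict to $H$ at the outset (Lemmas~\ref{hco60} and~\ref{hco17}), so that $b$ is a fixed point and $E''$ is $(H\cap\mathcal{C})$-cofibrant.
\item Then $\{b\}\to B$ is an $hH$-cofibration. For instance, for a $\mathcal{C}$-CW base, subdivide so that $Gb$ is a $0$-cell, and use that $eH$ is a fixed point of the smooth $H$-manifold $G/H$.
\item Its pullback $p'^{-1}(b)\to E''$ along the $hH$-fibration $p'$ is an $hH$-cofibration.
\item Theorem~\ref{hco71}, applied to the group $H$, then shows that $p'^{-1}(b)$ is $(H\cap\mathcal{C})$-cofibrant.
\end{itemize}
This is exactly the paper's route via Theorem~\ref{hco54}.

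If you want to keep your open-set idea, apply it $H$-equivariantly around the point rather than $G$-equivariantly around the orbit. Take an $H$-invariant open $W\ni b$ that $H$-contracts onto $b$. Then $p'^{-1}(W)$ is genuinely $(H\cap\mathcal{C})$-cofibrant by Lemma~\ref{hco17} and Corollary~\ref{hco11}. It is $H$-homotopy equivalent to $p'^{-1}(b)$ because $p'$ is an $hH$-fibration. This way $G\times_H F$ never enters.
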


Non-equivariantly, Theorem \ref{hco76} is due to Stasheff, \cite{S63}, with an alternative proof given by Sch{\"o}n, \cite{S77}. Equivariantly, the forward direction is due to Waner, \cite[Corollary 4.14]{W80}.

	\textbf{Notations and Conventions}
	
	All spaces are assumed to be CGWH, all groups are assumed to be compact Lie, and all subgroups of compact Lie groups under discussion are assumed to be closed. When working in the category of $G$-spaces, all maps, homotopies, spaces etc. are assumed to be $G$-equivariant, even if we neglect to say so.
	
	Throughout we let $\mathcal{C}$ denote a set of subgroups of $G$ which we take to be closed under conjugacy, and we then work with the $\mathcal{C}$-model structure on the category of $G$-spaces, \cite[Proposition B.7]{S18}. Recall that the $\mathcal{C}$-model structure is a topological model structure cofibrantly generated by:	
\begin{gather*}
	I = \{G/H \times S^{n-1} \to G/H \times D^n | n \geq 0, H \in \mathcal{C}\}, \\
	J = \{G/H \times D^n \to G/H \times D^n \times I | n \geq 0, H \in \mathcal{C}\}
\end{gather*}
	
	Moreover, the $\mathcal{C}$-fibrations (resp. $\mathcal{C}$-equivalences) are the maps $p$ such that $p^H$ is a $q$-fibration (resp. $q$-equivalence) for every $H \in \mathcal{C}$. Note that our assumption that $\mathcal{C}$ is closed under conjugacy results in no loss of generality, but is convenient for proofs.

	\section{Hurewicz cofibrations between $q$-cofibrant spaces are $q$-cofibrations}
	
We now move onto the proof of our main theorem.	If $f,g: X \to Y$ are maps and $i: A \to X$ is a map, we say that a homotopy $H$ between $f$ and $g$ is rel $i$ if $H \circ (i \times 1)$ is the constant homotopy between $fi$ and $gi (= fi)$. Similarly, if $p: Y \to Z$ is a map we say that $H$ is rel $p$ if $pH$ is the constant homotopy between $pf$ and $pg$. We leave it to the reader to determine from context whether a homotopy is relative to precomposition or postcomposition.

 The key point is the following generalisation of \cite[Proposition 17.1.4]{MP12}, which admits the same proof:
	
	\begin{lemma} \label{hco1} Let $i: A \to B$ be an $hG$-cofibration and $C$ a subspace of $A$. Let $p: X \to Y$ be a map which has the RLP wrt $B \times \{0\} \cup_{C \times \{0\}} C \times I \to B \times I$. Suppose that we are given a commutative square:
		
		\[	\begin{tikzcd}
			A \arrow[swap]{d}{i} \arrow{r}{v} & X \arrow{d}{p} \\
			B \arrow[swap]{r}{w} & Y 		
		\end{tikzcd} \]
		
		and a map $\sigma: B \to X$ such that $p \sigma i = pv$, $\sigma i \simeq v$ rel $p$ and $p \sigma \simeq w$ rel $i$. Then $\sigma$ is homotopic to some $\tau$ with $\tau i = v$ and $p \tau = w$.
		
	\end{lemma}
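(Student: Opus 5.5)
Following the proof of \cite[Proposition 17.1.4]{MP12}, I will correct $\sigma$ in two stages. First I make it agree with $v$ on $A$ while keeping $p\sigma$ unchanged. Then I correct it to lie over $w$, which is where the lifting property of $p$ enters.

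\textbf{Step 1 (fixing the restriction to $A$).} Let $K: A \times I \to X$ be a homotopy from $\sigma i$ to $v$ rel $p$, so $pK$ is the constant homotopy at $p\sigma i = pv$. Since $i$ is an $hG$-cofibration, $B \times \{0\} \cup A \times I$ is a retract of $B \times I$. Applying the homotopy extension property to $\sigma$ and $K$ gives $\widetilde K: B \times I \to X$ with $\widetilde K_0 = \sigma$ and $\widetilde K \circ (i \times 1) = K$. Set $\sigma' = \widetilde K_1$, so $\sigma' i = v$ and $\sigma' \simeq \sigma$. The composite $p\widetilde K$ is no longer constant on all of $B$, only on $A$. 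Hence $p\sigma'$ is homotopic to $p\sigma$ rel $i$, and therefore $p\sigma' \simeq w$ rel $i$ by concatenating with the given homotopy. So I may assume $\sigma i = v$, with $L: B \times I \to Y$ a homotopy from $p\sigma$ to $w$ that is constant on $A$.

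\textbf{Step 2 (fixing the projection).} I want to lift $L$ to a homotopy starting at $\sigma$. I need to control it on $C$ (and ideally on $A$), but the hypothesis only gives the RLP against $B \times \{0\} \cup_{C \times 0} C \times I \to B \times I$. I use the square whose top map is $\sigma$ on $B \times 0$ and the constant homotopy at $v|_C$ on $C \times I$, and whose bottom map is $L$. This square commutes because $L$ is constant on $A \supseteq C$ with value $pv$. A lift $M: B \times I \to X$ has $M_0 = \sigma$ and $pM = L$, so $\tau := M_1$ satisfies $p\tau = w$ and $\tau \simeq \sigma$.

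\textbf{Step 3 (the main obstacle).} The lift $M$ is only constant on $C$, not on all of $A$, so $\tau i$ need not equal $v$. It does satisfy $\tau i \simeq v$ rel $p$ via $M\circ(i\times 1)$, since $pM$ is constant on $A$. To remove this defect I redo the correction in the other direction. Set $N := M \circ (i \times 1)$, reversed so that it runs from $\tau i$ to $v$; $pN$ is constant. Since $N$ is constant on $C$, I want to extend it over $B$ while keeping the projection constant, and this is where I would expect to need the MP12 trick. The idea is to use a single lift of a suitably reparametrized $I \times I$ homotopy, built from a retraction $r: B \times I \to B \times 0 \cup A \times I$, so that constancy on $A$ comes for free. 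Concretely, consider the map $B \times I \to B \times I$ given by $(b,t) \mapsto$ the homotopy $L$ precomposed with $r$ in a way that fixes $A \times I$ pointwise. I would first try lifting $L$ against the prescribed data $\sigma$ on $B\times 0$ and constant $v$ on $A \times I$ directly. Extending this from $C \times I$ to $A \times I$ is then handled by composing with $r$ and the $hG$-cofibration structure of $i$. The rel-$C$ RLP is applied to the reparametrized square, which makes $p\tau = w$ and $\tau i = v$ hold simultaneously. Verifying that this reparametrization really yields a square with domain $B\times 0\cup C\times I$ and that the restriction to $A$ is exactly $v$ is the delicate point.
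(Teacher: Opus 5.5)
Your Steps 1 and 2 are correct. They coincide with the paper's reduction to $\sigma i = v$ and with its single use of the lifting property, taking $\sigma$ on $B\times 0$ and the constant homotopy on $C\times I$.

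The gap is Step 3, which is not an argument, and the mechanisms you sketch there fail:
\begin{itemize}
\item Lifting $L$ with constant data on all of $A\times I$ needs the RLP against $B\times 0\cup A\times I\to B\times I$, which is not assumed. You only have the RLP for the one map built from $C$ and this particular $B$, so you cannot lift auxiliary homotopies over $B\times I$ either.
\item Precomposing $L$ with a retraction $r\colon B\times I\to B\times 0\cup A\times I$ gives $L\circ r=p\circ(\sigma\cup \mathrm{const}_v)\circ r$. This has an obvious lift but no longer ends at $w$.
\item Re-correcting $\tau$ on $A$ via the homotopy extension property destroys $p\tau=w$ again. That returns you to the situation of Step 1, so it is a regress.
\end{itemize}

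The missing idea is to let the stopping time depend on $b$.
\begin{itemize}
\item Let $(H,\lambda)$ represent $(B,A)$ as a $G$-NDR pair, so $A=\lambda^{-1}(0)$.
\item Since $L$ is constant on each $\{a\}\times I$, replace it by $(b,t)\mapsto L(b,\min(1,t/\lambda(b)))$ for $b\notin A$, and by $L(b,1)$ for $b\in A$. This is continuous by Lemma~\ref{hco50}, is still a homotopy $p\sigma\simeq w$ rel $i$, and satisfies $L(b,t)=w(b)$ for $t\ge\lambda(b)$.
\item Lift this $L$ exactly as in your Step 2 to get $M$, and set $\tau(b)=M(b,\lambda(b))$.
\end{itemize}
Then $\tau(a)=M(a,0)=\sigma(a)=v(a)$ for $a\in A$, and $p\tau(b)=L(b,\lambda(b))=w(b)$. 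The map $(b,s)\mapsto M(b,s\lambda(b))$ is a homotopy from $\sigma$ to $\tau$.

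This is the paper's proof. It makes your ``main obstacle'' disappear, because on $A$ the lift $M$ is only ever evaluated at time $0$. The constancy on $C\times I$ serves only to put the square in the form against which $p$ has the RLP.
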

	
	\begin{proof}
		Since $i$ is an $hG$-cofibration, $\sigma$ is homotopic to some $\sigma^{'}$ with $\sigma^{'} i = v$ in such a way that the composite homotopy $p \sigma^{'} \simeq w$ is still relative to $i$. So we have reduced to the case where $\sigma i = v$. Let $(H, \lambda)$ represent $(B,A)$ as an $NDR$-pair. Let $K: B \times I \to Y$ be a homotopy between $p \sigma$ and $w$, rel $i$ and normalised so that $K(b,t) = K(b,1)$ for $t \geq \lambda(b)$ (see Lemma \ref{hco50}). Since $p$ has the RLP wrt $B \times \{0\} \cup_{C \times \{0\}} C \times I \to B \times I$, and using the constant homotopy on $C \times I$, we can lift this homotopy to $\tilde{K}: B \times I \to X$ starting at $\sigma$. If we define $\tilde{\sigma} (b) = \tilde{K}(b,\lambda(b))$ we solve the lifting problem as desired.
	\end{proof}

We can now prove that an $hG$-cofibration between $\mathcal{C}$-cofibrant $G$-spaces is a $\mathcal{C}$-cofibration, or more generally:

\begin{lemma} \label{hco2}
	If we have a commutative triangle:
	
\[	\begin{tikzcd}
		& C \arrow[swap]{dl}{j} \arrow{dr}{k} & \\
		A \arrow[swap]{rr}{i} & & B
	\end{tikzcd} \]

such that $i$ is an $hG$-cofibration and both $j$ and $k$ are $\mathcal{C}$-cofibrations, then $i$ is a $\mathcal{C}$-cofibration.
\end{lemma}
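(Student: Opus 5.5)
The plan is to verify the left lifting property directly. Fix a $\mathcal{C}$-acyclic fibration $p: X \to Y$ and a commutative square $v: A \to X$, $w: B \to Y$ with $pv = wi$. We want $\tau: B \to X$ with $\tau i = v$ and $p\tau = w$. The idea is to build an approximate lift using the $\mathcal{C}$-cofibrations $j$ and $k$, and then correct it with Lemma \ref{hco1}, taking for the subspace there (the image of) $C$.

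\textbf{Step 1: an approximate lift.} The square with top $vj: C \to X$ and bottom $w: B \to Y$ commutes, since $pvj = wij = wk$. Because $k$ is a $\mathcal{C}$-cofibration and $p$ is a $\mathcal{C}$-acyclic fibration, there is a lift $\sigma: B \to X$ with $\sigma k = vj$ and $p\sigma = w$. Then $p\sigma i = wi = pv$, and $p\sigma \simeq w$ rel $i$ holds trivially via the constant homotopy.

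\textbf{Step 2: a homotopy $\sigma i \simeq v$ rel $p$.} The maps $\sigma i$ and $v$ agree on $C$, since $\sigma i j = \sigma k = vj$, and they have the same image under $p$. Because the $\mathcal{C}$-model structure is topological, the pushout-product of $j$ with $\partial I \to I$,
\[ A \times \partial I \cup_{C \times \partial I} C \times I \to A \times I, \]
is a $\mathcal{C}$-cofibration. Consider the square whose top map is $\sigma i$ on $A \times \{0\}$, $v$ on $A \times \{1\}$, and the constant homotopy at $vj$ on $C \times I$. Its bottom map is the constant homotopy at $pv$ on $A \times I$. This square commutes, so lifting against $p$ gives a homotopy $\sigma i \simeq v$ whose image under $p$ is constant, i.e.\ a homotopy rel $p$ (and also rel $C$).

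\textbf{Step 3: apply Lemma \ref{hco1}.} Since $\mathcal{C}$-cofibrations are retracts of relative $G$-cell complexes, they are $hG$-cofibrations and hence closed inclusions. So we may regard $C$ as a subspace of $A$, and via $k = ij$ also of $B$. The map
\[ B \times \{0\} \cup_{C \times \{0\}} C \times I \to B \times I \]
is the pushout-product of $k$ with $\{0\} \to I$. It is therefore a $\mathcal{C}$-acyclic cofibration, so the $\mathcal{C}$-fibration $p$ has the RLP with respect to it. All hypotheses of Lemma \ref{hco1} are now satisfied, and it yields $\tau$ with $\tau i = v$ and $p\tau = w$. Hence $i$ has the LLP against all $\mathcal{C}$-acyclic fibrations, so it is a $\mathcal{C}$-cofibration.

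\textbf{Main obstacle.} The delicate point is the bookkeeping that lets $C$ serve as the subspace in Lemma \ref{hco1}. We need $j$ and $k$ to be genuine closed inclusions compatible with $i$, and the pushout-product maps to be $\mathcal{C}$-(acyclic) cofibrations; both rest on the $\mathcal{C}$-model structure being topological and cofibrantly generated by $I$ and $J$. I would check this first. Everything after it is routine lifting.
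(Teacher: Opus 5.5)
Your proof is correct. Its core matches the paper's: build an approximate lift $\sigma$ with $p\sigma = w$, $\sigma k = vj$ and $\sigma i \simeq v$ rel $p$, then correct it with Lemma \ref{hco1} using the subspace $C$, relying on $k \,\widehat{\times}\, (\{0\} \to I)$ being a $\mathcal{C}$-acyclic cofibration.

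Where you differ is in how the approximate lift is produced.

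\textbf{The paper's route.} It passes to the induced model structure on $(C \downarrow \textbf{G-Sp})$. By the retract argument it only has to treat acyclic fibrations $p: (X,\alpha) \to (Y,\beta)$ between cofibrant objects there. Cofibrancy of $\beta$ gives a section $s$ with $ps = 1$ and $s\beta = \alpha$. Cofibrancy of $\alpha$ gives a homotopy $sp \simeq 1$ rel $p$ and under $C$. Then $sw$ serves as the approximate lift for every lifting problem at once. Note that $sw$ is a particular solution of your Step 1 square, since $psw = w$ and $swk = s\beta = \alpha = vj$.

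\textbf{Your route.} You lift directly against $k$ and against $j \,\widehat{\times}\, (\partial I \to I)$. This works for an arbitrary $\mathcal{C}$-acyclic fibration, with no cofibrancy needed on $X$ or $Y$. So you need neither the under-category model structure nor the retract argument. The hypothesis that $j$ is a $\mathcal{C}$-cofibration also enters your argument directly, in Step 2. In the paper it is used only to make $A$ cofibrant under $C$, so that the retract argument applies.

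\textbf{Comparison.} The paper's version packages the homotopical input as a general fact: acyclic fibrations between cofibrant objects under $C$ admit a section that is a deformation inverse over $Y$ and under $C$. That is a reusable model-categorical pattern. Your version does the same work per lifting problem, and it is the shorter and more elementary argument.
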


\begin{proof}
In any model category, to prove that a map $i$ between cofibrant objects is a cofibration, it suffices to show that $i$ has the LLP wrt all acyclic fibrations between cofibrant objects, by the retract argument. So consider the $\mathcal{C}$-model structure on $(C \downarrow \textbf{G-Sp})$ induced by the $\mathcal{C}$-model structure on $\textbf{G-Sp}$, \cite[Theorem 7.6.5]{H03}. Let $p: (X, \alpha: C \to X) \to (Y, \beta: C \to Y)$ be a $\mathcal{C}$-acyclic $\mathcal{C}$-fibration between $\mathcal{C}$-cofibrant objects. Then, working in $\textbf{G-Sp}$, we can solve  a lifting problem of $\beta$ against $p$ to find a map $s:(Y, \beta) \to (X, \alpha)$ in $(C \downarrow \textbf{G-Sp})$ with $ps=1$. Solving a lifting problem of $X \times \{0,1\} \cup C \times I \to X \times I$ against $p$, shows that $sp$ is homotopic to $1$ rel $p$. Now consider a lifting problem of $i$ against $p$ in $(C \downarrow \textbf{G-Sp})$. It suffices to find a solution to the underlying lifting problem in $\textbf{G-Sp}$ since any lift will automatically define a map in $(C \downarrow \textbf{G-Sp})$. We can solve this lifting problem as in Lemma \ref{hco1}, with the initial up to homotopy lift as $sw$.
\end{proof}

\begin{example} \label{hco5} 
	Suppose that $H \in \mathcal{C}$. If $U$ is an equivariant open subspace of $G/H \times D^n$, then $i: U \cap (G/H \times \partial D^n) \to U \cap (G/H \times D^n)$ is the inclusion of the boundary of a smooth $G$-manifold and so is an $hG$-cofibration between $\mathcal{C}$-cofibrant spaces by the equivariant collar neighbourhood theorem, \cite[Theorem 3.5]{K07}. Therefore, Lemma \ref{hco2} tells us that $i$ is a $\mathcal{C}$-cofibration. 
\end{example}

\begin{lemma} \label{hco10}
	If $i: A \to B$ is a $\mathcal{C}$-cofibration and $U \subset B$ is a $G$-equivariant open subspace, then $i: A \cap U \to U$ is a $\mathcal{C}$-cofibration.
\end{lemma}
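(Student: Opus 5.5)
The plan is to reduce to the case of a relative $\mathcal{C}$-cell complex and then handle one cell at a time, using Example \ref{hco5} for the basic case. $\mathcal{C}$-cofibrations are retracts of relative $I$-cell complexes. Suppose $i$ is a retract of $i': A \to B'$, with maps $B \to B' \to B$ under $A$ whose composite is the identity. Let $r: B' \to B$ be the retraction and put $U' = r^{-1}(U)$. Then $U'$ is an open $G$-subspace of $B'$, and $A \cap U' = A \cap U$. The retraction data restrict to exhibit $A \cap U \to U$ as a retract of $A \cap U' \to U'$. So it suffices to treat a relative $I$-cell complex $A = B_0 \to B_1 \to \cdots \to B_\lambda = B$.

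Next I will show that each $U \cap B_\beta \to U \cap B_{\beta+1}$ is a $\mathcal{C}$-cofibration. For limit ordinals I also need $U \cap B_\gamma = \operatorname{colim}_{\beta<\gamma} U \cap B_\beta$. In CGWH, colimits along closed inclusions restrict well to open subspaces, so this should hold. The composite then is a $\mathcal{C}$-cofibration, since a transfinite composite of cofibrations is a cofibration.

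For the successor step, $B_{\beta+1}$ is a pushout of $B_\beta$ along a coproduct of cells $G/H \times S^{n-1} \to G/H \times D^n$ with $H \in \mathcal{C}$. A coproduct of maps is handled cell by cell, so I will assume a single cell with attaching map $\phi$. Let $V$ be the preimage of $U \cap B_{\beta+1}$ in $G/H \times D^n$; it is an open $G$-subspace. I claim that
\[ U \cap B_{\beta+1} = (U \cap B_\beta) \cup_{V \cap (G/H \times S^{n-1})} V \]
is a pushout in $G$-spaces. Granting this, Example \ref{hco5} says that $V \cap (G/H \times S^{n-1}) \to V$ is a $\mathcal{C}$-cofibration. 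Cofibrations are stable under pushout, so $U \cap B_\beta \to U \cap B_{\beta+1}$ is a $\mathcal{C}$-cofibration.

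The main obstacle is verifying the topological claim that restricting a pushout (or a transfinite colimit) to an open subspace gives the corresponding pushout of restricted pieces, in the CGWH category. I would argue as follows. The quotient map $q: B_\beta \sqcup G/H \times D^n \to B_{\beta+1}$ restricts over the open saturated set $q^{-1}(U)$ to a quotient map. In compactly generated spaces, restrictions of quotient maps to open saturated subsets remain quotients. The $k$-ification subtleties are harmless because open subspaces of $k$-spaces are $k$-spaces, and the weak Hausdorff property is inherited by subspaces. The colimit statement for limit ordinals is handled by the same argument applied to the quotient from the disjoint union.
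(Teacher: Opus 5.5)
Your proposal is correct and is essentially the paper's proof: it uses the same retract reduction via $r^{-1}(U)$, the same decomposition of $A\cap U\to U$ into restricted cell attachments and transfinite colimits, and Example \ref{hco5} for each restricted cell. The only difference is that you check directly, by restricting quotient maps to open saturated subsets, that restriction to an open subspace preserves these pushouts and colimits, whereas the paper takes this from a cited lemma together with Lemma \ref{hco9}.
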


\begin{proof}
	Firstly, $i$ is a retract of a relative $\mathcal{C}$-cell complex, $j: A \to C$, as shown below:
	
		\[	\begin{tikzcd}
		A  \arrow{d}{i} \arrow{r} & A \arrow{d}{j} \arrow{r} & A \arrow{d}{i} \\
		B \arrow[swap]{r}{i} & C \arrow[swap]{r}{r} & B 		
	\end{tikzcd} \]

If we let $V = r^{-1}(U)$, then $i: A \cap U \to U$ is a retract of $j: A \cap V \to V$. So we have reduced to the case where $i$ is a relative $\mathcal{C}$-cell complex, which we now assume. 

We can express $i$ as a transfinite composite of maps $A_{\lambda} \to A_{\lambda + 1}$, each of which is a pushout of a map of the form $G/H \times S^{n-1} \to G/H \times D^n$, with $H \in \mathcal{C}$. By \cite[Lemma 6.2.3]{R23} and Lemma \ref{hco9}, the map $i: A \cap U \to U$ can be expressed as a transfinite composite of the maps $A_{\lambda} \cap U \to A_{\lambda + 1} \cap U$, each of which is a pushout of a map of the form $(G/H \times S^{n-1}) \cap \alpha^{-1}(U) \to (G/H \times D^n) \cap\alpha^{-1}(U)$, where $\alpha$ is the inclusion of the cell into $B$. By Example \ref{hco5}, $A_{\lambda} \cap U \to A_{\lambda + 1} \cap U$ is therefore a $\mathcal{C}$-cofibration, and, hence, so is the transfinite composite $i: A \cap U \to U$.
\end{proof}

\begin{corollary} \label{hco11}
	An equivariant open subspace of a $\mathcal{C}$-cofibrant $G$-space is $\mathcal{C}$-cofibrant.
\end{corollary}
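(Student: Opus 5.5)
This follows from Lemma \ref{hco10} applied to the map from the initial $G$-space. Let $X$ be a $\mathcal{C}$-cofibrant $G$-space and $U \subset X$ a $G$-equivariant open subspace. By definition, the map $i: \emptyset \to X$ is a $\mathcal{C}$-cofibration. Taking $A = \emptyset$ and $B = X$ in Lemma \ref{hco10}, we get that $\emptyset \cap U \to U$ is a $\mathcal{C}$-cofibration. Since $\emptyset \cap U = \emptyset$, this map is the unique map $\emptyset \to U$, so $U$ is $\mathcal{C}$-cofibrant.

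Nothing else needs checking. The only point to confirm is that Lemma \ref{hco10} allows $A = \emptyset$. Its proof places no nonemptiness assumption on $A$, and when $A = \emptyset$ the relative cell complex $j$ appearing there is simply a $\mathcal{C}$-cell complex. So there is no real obstacle here: the substantive work, namely the cell-by-cell reduction together with Example \ref{hco5} and Lemma \ref{hco2}, was already done in Lemma \ref{hco10}.
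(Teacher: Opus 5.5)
Your proof is correct and is exactly the paper's approach: the corollary is stated immediately after Lemma \ref{hco10} without a separate proof, as the special case $A = \emptyset$. You are also right that the proof of Lemma \ref{hco10} makes no nonemptiness assumption on $A$.
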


Combining Lemma \ref{hco2} and Corollary \ref{hco11}, we can rephrase Lemma \ref{hco2} as follows:

\begin{theorem} \label{hco6}
	If $i: A \to B$ is an $hG$-cofibration and $B$ is $\mathcal{C}$-cofibrant, then $A$ is $\mathcal{C}$-cofibrant and $i$ is a $\mathcal{C}$-cofibration.	
\end{theorem}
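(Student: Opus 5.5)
The strategy is to show that $A$ is $\mathcal{C}$-cofibrant, and then to apply Lemma \ref{hco2} with $C = \emptyset$. In that triangle $j: \emptyset \to A$ and $k: \emptyset \to B$ are the structure maps, and $i$ is an $hG$-cofibration by hypothesis. So everything comes down to proving that $A$ is $\mathcal{C}$-cofibrant, and the only tool available for producing cofibrant objects out of $B$ is Corollary \ref{hco11}. The plan is therefore to realise $A$ as a retract of an equivariant open subspace of $B$.

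First I would use the $NDR$-pair description of the $hG$-cofibration $i$. Since $i$ is an $hG$-cofibration, it is a closed inclusion (we work in CGWH). We may therefore take $(H, \lambda)$ representing $(B,A)$ as a $G$-equivariant $NDR$-pair: $\lambda: B \to I$ is $G$-invariant with $A = \lambda^{-1}(0)$, and $H: B \times I \to B$ is an equivariant homotopy rel $A$ with $H(b,0)=b$ and $H(b,1) \in A$ whenever $\lambda(b) < 1$. This is the same normalisation used in the proof of Lemma \ref{hco1}, via Lemma \ref{hco50}.

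Next, set $U = \lambda^{-1}([0,1))$. This is a $G$-equivariant open subspace of $B$ containing $A$, so Corollary \ref{hco11} shows that $U$ is $\mathcal{C}$-cofibrant. The map $r = H(-,1)|_U : U \to A$ is an equivariant retraction of the inclusion $A \to U$, since $H$ is rel $A$. Hence $A$ is a retract of $U$ in $\textbf{G-Sp}$, and retracts of cofibrant objects are cofibrant, so $A$ is $\mathcal{C}$-cofibrant. Lemma \ref{hco2} applied to $\emptyset \to A \to B$ then shows that $i$ is a $\mathcal{C}$-cofibration.

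The main obstacle is the first step: ensuring the $NDR$ data can be chosen equivariantly, with $\lambda$ $G$-invariant and $H$ equivariant. This is what makes $U$ a $G$-subspace and $r$ a $G$-map. I would get it from the standard equivariant characterisation of $hG$-cofibrations through an equivariant retraction $B \times I \to B \times \{0\} \cup A \times I$. The usual formulas $\lambda(b) = \sup_t (t - \pi_I R(b,t))$ and $H = \pi_B R$ are natural and hence equivariant, with $G$ acting trivially on $I$. The retraction lands in $A$ on the region where $\lambda < 1$ after the normalisation of Lemma \ref{hco50}. If that lemma only gives $H(b,1)\in A$ for $\lambda(b)<1$ up to rescaling, I would replace $U$ by $\lambda^{-1}([0,\tfrac12))$ and rescale accordingly.
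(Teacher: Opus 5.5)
Your proposal is correct and matches the paper's proof: you realise $A$ as a retract of the $G$-invariant open set $\lambda^{-1}([0,1))$, which is $\mathcal{C}$-cofibrant by Corollary \ref{hco11}, and then apply Lemma \ref{hco2} with $C=\emptyset$ to conclude that $i$ is a $\mathcal{C}$-cofibration. Your rescaling worry at the end is unnecessary, because the standard $G$-NDR condition already gives $H(b,1)\in A$ whenever $\lambda(b)<1$.
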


\begin{proof} If $(H, \lambda)$ represents $(B,A)$ as a  $G$-NDR-pair, then $A$ is a retract of $\lambda^{-1}([0,1))$ which is $\mathcal{C}$-cofibrant by Corollary \ref{hco11}. 	
	\end{proof}
	
	To close this section, we record the following straightforward lemma concerning the restriction of the group action to a subgroup. Let $\mathcal{C}$ be a set of subgroups of $G$ which is closed under conjugacy and let $H$ be a subgroup of $G$.  Define $\mathcal{C} \cap H = \{K \cap H | K \in \mathcal{C}\}$. We then have:
	
	\begin{lemma} \label{hco17}
		If $X$ is $\mathcal{C}$-cofibrant as a $G$-space, then $X$ is $(\mathcal{C} \cap H)$-cofibrant as an $H$-space.
	\end{lemma}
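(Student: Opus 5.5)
Since every $\mathcal{C}$-cofibrant $G$-space is a retract of a $\mathcal{C}$-cell complex, and restriction of the action to $H$ commutes with retracts, colimits, pushouts and transfinite composites (the forgetful functor from $G$-spaces to $H$-spaces is a left adjoint, with right adjoint $\mathrm{Map}_H(G,-)$), it suffices to show that each generating cell $G/K \times S^{n-1} \to G/K \times D^n$ with $K \in \mathcal{C}$ becomes a $(\mathcal{C}\cap H)$-cofibration of $H$-spaces. Then every $\mathcal{C}$-cell complex restricts to a transfinite composite of pushouts of $(\mathcal{C}\cap H)$-cofibrations, hence is a $(\mathcal{C}\cap H)$-cofibrant $H$-space, and so are its retracts.

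Since the map is $G/K \times (S^{n-1} \to D^n)$, and the $(\mathcal{C}\cap H)$-model structure is topological (so the pushout-product of a cofibrant object with the $q$-cofibration $S^{n-1}\to D^n$ of spaces is a cofibration), it suffices to show that $G/K$, regarded as an $H$-space, is $(\mathcal{C}\cap H)$-cofibrant. For this I use the standard fact that $G/K$ is a smooth compact $H$-manifold, so by Illman's theorem \cite[Corollary 7.2]{I83} it admits the structure of an $H$-CW complex. Its cells have the form $H/L \times D^m$ with $L$ an isotropy group of the $H$-action on $G/K$.

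Finally, I identify these isotropy groups: the $H$-stabiliser of $gK \in G/K$ is $H \cap gKg^{-1}$. Since $\mathcal{C}$ is closed under conjugacy, $gKg^{-1}\in\mathcal{C}$, so $H \cap gKg^{-1} \in \mathcal{C}\cap H$. Hence every cell of the $H$-CW structure is a $(\mathcal{C}\cap H)$-cell, and $G/K$ is $(\mathcal{C}\cap H)$-cofibrant. The step I regard as the main one is this appeal to Illman's triangulation theorem together with the isotropy check; it is the only place where genuine topology enters, the rest being formal manipulation of cell structures.
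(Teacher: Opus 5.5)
Your proof is correct. It departs from the paper at the one step where work is needed.

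After the same reduction to a single cell $G/K\times S^{n-1}\to G/K\times D^n$, the paper argues differently. It notes that this map is an $hH$-cofibration between smooth $H$-manifolds with isotropy in $\mathcal{C}\cap H$. Illman's theorem then makes both ends $(\mathcal{C}\cap H)$-cofibrant, and the main result (Lemma~\ref{hco2}, in the form of Theorem~\ref{hco6}) upgrades the $hH$-cofibration to a $(\mathcal{C}\cap H)$-cofibration.

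You instead apply Illman only to the closed $H$-manifold $G/K$. You then obtain the cell map from the topological pushout-product axiom of the $(\mathcal{C}\cap H)$-model structure (the property of \cite[Proposition B.5]{S18} the paper itself uses in Theorem~\ref{hco21}). Equivalently, the product of an $H$-CW complex with $(D^n,S^{n-1})$ is a relative $H$-CW complex.

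What each route buys:
\begin{itemize}
\item Your route is more elementary and independent of the paper's main result. It needs Illman's theorem only for $G/K$ rather than for $G/K\times D^n$ and $G/K\times S^{n-1}$.
\item The paper's route is a one-liner given what is already proved. It is deliberately presented as an instance of Theorem~\ref{hco6} serving as a recognition principle for cofibrations.
\end{itemize}

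Your explicit isotropy computation, $H\cap gKg^{-1}\in\mathcal{C}\cap H$, is exactly what the paper compresses into ``isotropy groups in $\mathcal{C}\cap H$''. The same computation also shows that $\mathcal{C}\cap H$ is closed under $H$-conjugacy, as the paper's conventions require for the $(\mathcal{C}\cap H)$-model structure.
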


\begin{proof}
	We can reduce to the case where $X$ is a $\mathcal{C}$-cell complex via a retract argument. In this case, $X$ is a transfinite composite of pushouts of maps of the form $i: G/K \times S^{n-1} \to G/K \times D^n$ with $K \in \mathcal{C}$. Therefore, it suffices to show that $i$ is a $(\mathcal{C} \cap H)$-cofibration when viewed as an $H$-map. Indeed, $i$ is an $hH$-cofibration between $H$-manifolds with isotropy groups in $\mathcal{C} \cap H$, so is a $(\mathcal{C} \cap H)$-cofibration by \cite[Corollary 7.2]{I83} and Theorem \ref{hco6}.
\end{proof}

\section{A pushout-product property of symmetrizable cofibrations} \label{hco81}

Let $\mathcal{E}$ be a closed symmetric monoidal, tensored topological model category with monoidal product $\boxtimes$. If $f: A \to B$ and $g: C \to D$ are maps in $\mathcal{C}$, let $f \widehat{\boxtimes} g$ denote the pushout-product of $f$ and $g$, $f \widehat{\boxtimes} g: A \boxtimes D \cup_{A \boxtimes C} B \boxtimes C \to B \boxtimes D$. In general, a hat above an operation denotes either the corresponding pushout-product or the dual notion involving pullbacks. The $n$-fold composite $f \widehat{\boxtimes} ...  \widehat{\boxtimes} f$ is denoted by $f^{\widehat{\boxtimes} n}$. The notion of a symmetrizable cofibration is then defined as follows:

\begin{definition}
	A map $i: A \to B$ in $\mathcal{E}$ is said to be a symmetrizable (acyclic) cofibration if for every $n \geq 1$, $h^{\widehat{\boxtimes} n} / \Sigma_n$ is an (acyclic) cofibration.
\end{definition}
	
	\begin{example} \label{hco4}
		Let $G = \Sigma_n$ and let $\mathcal{C}$ be the set of subgroups of $G$, each isomorphic to a product of symmetric groups $\Sigma_{n_1} \times ... \times \Sigma_{n_l}$, corresponding to partitions of $\{1,...,n\}$ into sets of size $n_1, ..., n_l$. Let $i: S^{k-1} \to D^k$ be the boundary inclusion. If we consider the iterated pushout product map $j: S^{k-1} \times (D^k)^{n-1} \cup ... \cup (D^k)^{n-1} \times S^{k-1} := Q_n(i) \to (D^k)^n$, then $j$ is an $h \Sigma_n$-cofibration by \cite[Lemma A.4]{M72}, and the domain and codomain are $\mathcal{C}$-cofibrant since they are $G$-manifolds, hence $G$-CW complexes, \cite[Corollary 7.2]{I83}, with isotropy groups in $\mathcal{C}$. Therefore, Lemma $\ref{hco2}$ tells us that $j$ is a $\mathcal{C}$-cofibration.
	\end{example}
	
	We then have the following application to the theory of symmetrizable cofibrations, which is a generalisation of \cite[Proposition 2.1.12]{S18}:
	
	\begin{theorem}
		Let $j: A \to B$ be a symmetrizable cofibration in a tensored and cotensored closed symmetric monoidal topological model category and let $i: C \to D$ be a $q$-cofibration of spaces.  Then the pushout product map:	
		\[
		j \widehat{\times} i : A \times D \cup_{A \times C} B \times C \to B \times D
		\]
		
		is a symmetrizable cofibration, which is a symmetrizable acyclic cofibration if either $i$ is acyclic or $j$ is a symmetrizable acyclic cofibration.
	\end{theorem}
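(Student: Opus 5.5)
We want to show that for every $n \geq 1$ the map $(j \widehat{\times} i)^{\widehat{\boxtimes} n}/\Sigma_n$ is a cofibration, acyclic under either hypothesis. The plan is to reduce to generating cofibrations, identify the symmetric power with a tensoring, and then invoke Example \ref{hco4}.

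\textbf{Reduction to generating cells.} Since $\mathcal{E}$ is tensored over spaces, $j \widehat{\times} i$ means the pushout-product with respect to the tensoring $-\times-$. First we use the standard identity
\[
(j \widehat{\times} i)^{\widehat{\boxtimes} n} \cong j^{\widehat{\boxtimes} n} \,\widehat{\times}\, i^{\widehat{\times} n},
\]
which is $\Sigma_n$-equivariant for the diagonal action. It holds because $(B\times D)^{\boxtimes n} \cong B^{\boxtimes n}\times D^n$ and pushout-products are associative and commutative up to coherent isomorphism. Next we reduce in $i$: the class of $q$-cofibrations $i$ for which the conclusion holds, for a fixed $j$, should be closed under pushouts, transfinite composites and retracts. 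Retracts are clear. For pushouts and composites we use the filtration argument for symmetric powers: for a pushout or composite, $(-)^{\widehat{\boxtimes} n}/\Sigma_n$ is built by a finite filtration whose layers are pushouts of maps of the form
\[
(j\widehat\times i_1)^{\widehat\boxtimes k}/\Sigma_k \ \widehat\boxtimes\ (\ldots)^{\widehat\boxtimes (n-k)}/\Sigma_{n-k},
\]
as in the proof of \cite[Proposition 2.1.12]{S18}. This lets us assume $i$ is a generating map $S^{k-1}\to D^k$, or $D^k \to D^k\times I$ in the acyclic case.

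\textbf{The key step.} With $i: S^{k-1}\to D^k$, the map $i^{\widehat\times n}$ is exactly the map $Q_n(i)\to (D^k)^n$ of Example \ref{hco4}, which is a $\mathcal{C}$-cofibration of $\Sigma_n$-spaces for $\mathcal{C}$ the Young subgroups. So we must show that
\[
\bigl(j^{\widehat\boxtimes n}\,\widehat\times\, c\bigr)/\Sigma_n
\]
is a cofibration for every $\mathcal{C}$-cofibration $c$ of $\Sigma_n$-spaces. By the same cell-induction (colimits commute with $(-)/\Sigma_n$ and with $\widehat\times$), it suffices to treat $c = \Sigma_n/H \times (S^{m-1}\to D^m)$ with $H = \Sigma_{n_1}\times\cdots\times\Sigma_{n_l}$. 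In that case
\[
(j^{\widehat\boxtimes n}\widehat\times c)/\Sigma_n \cong \bigl(j^{\widehat\boxtimes n}/H\bigr)\widehat\times(S^{m-1}\to D^m),
\]
and restricted to $H$ we have
\[
j^{\widehat\boxtimes n} \cong j^{\widehat\boxtimes n_1}\widehat\boxtimes\cdots\widehat\boxtimes j^{\widehat\boxtimes n_l},
\]
so $j^{\widehat\boxtimes n}/H \cong \widehat\boxtimes_r \bigl(j^{\widehat\boxtimes n_r}/\Sigma_{n_r}\bigr)$. Each factor is a cofibration because $j$ is symmetrizable. The pushout-product axiom of the monoidal model category then makes their pushout-product a cofibration, and the topological (SM7-type) axiom makes its pushout-product with $S^{m-1}\to D^m$ a cofibration.

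\textbf{Acyclicity.} If $j$ is symmetrizable acyclic, the factors above are acyclic and the pushout-product axioms preserve acyclicity. If instead $i: D^k\to D^k\times I$ is acyclic, then $i^{\widehat\times n}$ is $(D^k)^n\times(\{0\}\cup\ldots)\to (D^k)^n\times I^n$, which is an $h\Sigma_n$-cofibration and a homotopy equivalence between $\Sigma_n$-manifolds. By Lemma \ref{hco2} it is a $\mathcal{C}$-cofibration, and it is $\Sigma_n$-equivariantly acyclic, i.e. an acyclic $\mathcal{C}$-cofibration. The cell argument above then runs with generating acyclic cells $\Sigma_n/H\times(D^m\to D^m\times I)$.

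The main obstacle is the first reduction: the symmetric-power filtration for pushouts and transfinite composites of $i$, together with the careful $\Sigma_n$-equivariant identification of iterated pushout-products. For this I would follow \cite[Proposition 2.1.12]{S18} closely.
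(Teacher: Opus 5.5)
Your proposal is correct and is essentially the paper's argument. It uses the same reduction to generating $q$-cofibrations (the paper simply cites the closure properties of symmetrizable cofibrations from \cite{GG16} instead of redoing the filtration), the same use of Example \ref{hco4}, and the same identification $j^{\widehat{\boxtimes} n}/H \cong \widehat{\boxtimes}_r\, (j^{\widehat{\boxtimes} n_r}/\Sigma_{n_r})$. The one difference is presentation: you run the cell induction over $\mathcal{C}$-cells and compute orbits, while the paper passes to the adjoint lifting problem and checks that the $H$-fixed points $\widehat{Map}(j^{\widehat{\boxtimes} n}/H,p)$ of $\widehat{Map}(j^{\widehat{\boxtimes} n},p)$ are (acyclic) $q$-fibrations.

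Your explicit check that $i^{\widehat{\times} n}$ is an acyclic $\mathcal{C}$-cofibration when $i$ is $D^k \to D^k \times I$ fills in the case where only $i$ is acyclic. The paper's write-up leaves that case implicit.
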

	
	\begin{proof}
		 The assumption that $\mathcal{E}$ is tensored and cotensored implies that the bifunctor $- \times -$ preserves colimits in both variables. It follows that we can reduce to the case where $i$ is a generating $q$-cofibration via \cite[Proposition C.2.9]{RV22}, and using the closure properties of symmetrizable cofibrations described in \cite[Theorem 7]{GG16}. Now, for any $n \geq 1$, we want to show that $(j \widehat{\times} i)^{\widehat{\boxtimes} n} / \Sigma_n$ is an (acyclic) cofibration. Equivalently, we want to show that $(j \widehat{\times} i)^{\widehat{\boxtimes} n}$ has the $\Sigma_n$-equivariant LLP with respect to (acyclic) fibrations $p: X \to Y$, where $\Sigma_n$ acts trivially on $X$ and $Y$. Using the tensor adjunction, it suffices to show that $i^{\widehat{\times} n}$ has the equivariant LLP wrt $\widehat{Map}(j^{\widehat{\boxtimes n}},p)$. In Example \ref{hco4}, we saw that $i^{\widehat{\times} n}$ is a $\mathcal{C}$-cofibration, with respect to the specified set of subgroups $\mathcal{C}$. So it suffices to show that $\widehat{Map}(j^{\widehat{\boxtimes n}},p)$ is a $\mathcal{C}$-acyclic $\mathcal{C}$-fibration. We need to show that for every $H \in \mathcal{C}$,   $\widehat{Map}(j^{\widehat{\boxtimes n}} / H,p)$ is a $q$-acyclic $q$-fibration. Since $H \in \mathcal{C}$ it is isomorphic to a group of the form $\Sigma_{n_1} \times ... \times \Sigma_{n_l}$, so $j^{\widehat{\boxtimes n}} / H$ is isomorphic to $(j^{\widehat{\boxtimes} n_1} / \Sigma_{n_1}) \widehat{\boxtimes} ... \widehat{\boxtimes} (j^{\widehat{\boxtimes}n_l} / \Sigma_{n_l})$. Therefore, the result follows from the fact that the model structure is topological and monoidal, $j$ is a symmetrizable cofibration and either $p$ is acyclic or $j$ is a symmetrizable acyclic cofibration.
	\end{proof}

\section{Local gluing results for fibrations and cofibrations} \label{hco82}

In this subsection, we explain how Theorem \ref{hco6} allows us to prove gluing results for $\mathcal{C}$-fibrations and $\mathcal{C}$-cofibrations analogous to Dold's gluing results for $h$-fibrations and $h$-cofibrations, \cite[Theorem 4.8]{D63} and \cite[Satz 2]{D68}. First recall that:

\begin{definition}
	A $G$-numerable open cover $\{U_j\}_{j \in J}$ of a $G$-space, $B$, is a locally finite open cover of $B$ such that for every $j \in J$ there exists an equivariant map $\lambda_j : B \to I$ with $U_j = \lambda_j^{-1}((0,1])$.
\end{definition}

The next lemma is central to the proofs of all four gluing theorems:

\begin{lemma} \label{hco19}
	Consider a lifting problem of $G$-spaces as below:	
	\begin{equation} \label{hco24}	
		\begin{tikzcd}
			A \arrow{r}{f} \arrow[swap]{d}{i} & X \arrow{d}{p} \\
			B \arrow[swap]{r}{g} & Y 
		\end{tikzcd}
	\end{equation}
	
	Suppose that there exists a $G$-numerable open cover $\{U_i\}_{i \in I}$ of $B$ such that if $i \in I$ and $J \subset I$, then there exists a solution to the induced lifting problem:
	
	\[
	\begin{tikzcd}
		i^{-1}(U_i \cap (\cup_{j \in J} U_j))  \arrow{r}{f} \arrow[swap]{d}{i} & X \arrow{d}{p} \\
		U_i \cap (\cup_{j \in J} U_j) \arrow[swap]{r}{g} & Y 
	\end{tikzcd}
	\]
	
	and given two solutions $\psi_1, \psi_2$ of the induced lifting problem there exists a $G$-homotopy from $\psi_1$ to $\psi_2$ which is relative to both $p$ and $i$.
	
	Then there is a solution to the original lifting problem \ref{hco24}.
\end{lemma}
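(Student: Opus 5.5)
We glue local solutions together in the style of Dold's proof for $h$-fibrations, using the numerable partition of unity and the homotopies between solutions. Write $\lambda_j : B \to I$ for the functions with $U_j = \lambda_j^{-1}((0,1])$. The cover is locally finite, so we can normalise: $\mu_j = \lambda_j / \max_k \lambda_k$ is a continuous equivariant function, and $\mu_j = 1$ on a region that still carries the indexing needed below. Well-order the index set. For an ordinal $\alpha$, set $W_\alpha = \bigcup_{j < \alpha} U_j$. Define $\nu_\alpha = \max_{j<\alpha}\mu_j$; by local finiteness this is continuous, and $W_\alpha = \nu_\alpha^{-1}((0,1])$.

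By transfinite induction on $\alpha$ we construct solutions $\phi_\alpha$ of the lifting problem restricted to $W_\alpha$. These will be compatible in the sense that $\phi_\beta = \phi_\alpha$ on $\nu_\alpha^{-1}(\{1\})$ for $\alpha<\beta$. To avoid the shrinking problem, we strengthen this to the requirement that $\phi_\beta$ agrees with $\phi_\alpha$ on the open set where only indices $<\alpha$ are ``active'', which local finiteness makes manageable.

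\textbf{Successor step.} Suppose $\phi_\alpha$ is given on $W_\alpha$, and let $j=\alpha$ be the next index. On $U_j$ the hypothesis (with $J=\{j\}$, i.e.\ $U_j$ itself) provides a solution $\psi$ on $U_j$. On the overlap $U_j\cap W_\alpha$ we have two solutions, $\psi$ and $\phi_\alpha$. These are covered by the hypothesis with $J=\{k: k<\alpha\}$, so there is a homotopy $K:\psi\simeq\phi_\alpha$ on $U_j \cap W_\alpha$, relative to $p$ and $i$. Define $\phi_{\alpha+1}$ on $W_{\alpha+1}=W_\alpha\cup U_j$ as follows. It equals $\phi_\alpha$ where $\mu_j=0$, i.e.\ off $U_j$. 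It equals $\psi$ where $\nu_\alpha=0$. On the overlap it equals $K(x, \theta(x))$, where $\theta$ is built from $\mu_j$ and $\nu_\alpha$, for instance $\theta=\min(1,\nu_\alpha/\max(\mu_j,\nu_\alpha)\cdot 2)$ suitably adjusted, so that $\theta=1$ near the boundary of $U_j$ inside $W_\alpha$ and $\theta=0$ near the boundary of $W_\alpha$ inside $U_j$. Because $K$ is rel $p$, we get $p\phi_{\alpha+1}=g$. Because $K$ is rel $i$, we get $\phi_{\alpha+1} i=f$. Continuity follows from the two closed conditions on $\theta$, and equivariance from the equivariance of all the data.

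\textbf{Limit step and conclusion.} At limit ordinals we take the union of the $\phi_\alpha$. Continuity of the union is where the compatibility statement is essential. Local finiteness gives each point a neighbourhood meeting only finitely many $U_j$, and on such a neighbourhood the $\phi_\alpha$ stabilise after the last relevant index, provided each modification at stage $j$ is supported in $U_j$. The final $\phi$ on $W = B$ then solves the original lifting problem \ref{hco24}.

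\textbf{Main obstacle.} The delicate point is arranging the interpolation function $\theta$ so that modifications are supported inside $U_j$, while the region where earlier solutions are kept is large enough to preserve continuity at the boundaries. I would handle this as in Dold's argument. One replaces the cover by the shrunken sets $\{\mu_j > \nu_{j}/2\}$-type regions (or uses $\mu_j$ with the standard ``$\max$'' normalisation), so that at each point some index has $\mu_j=1$ on a neighbourhood. One then interpolates using $\theta = \min(1, 2\mu_j)$-style cutoffs, which makes $\theta$ identically $1$ near where the homotopy is undefined.
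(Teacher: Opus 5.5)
Your proposal is correct and is essentially the paper's argument. Your transfinite induction along a well-ordering, with each $\phi_\beta$ agreeing with $\phi_\alpha$ off the newly added $U_j$ and local finiteness handling limit stages, is the paper's Zorn's lemma argument on partial solutions $(A,\psi)$ ordered by agreement off the newly added sets. Your successor step is the paper's interpolation $K(u, \tfrac{3}{2} - 2\lambda_1)$ with $\lambda_1 + \lambda_2 = 1$.

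The shrinking you float in your last paragraph is unnecessary: cutting off $\nu_\alpha/(\nu_\alpha + \mu_j)$ at $\tfrac14$ and $\tfrac34$ already gives the required $\theta$. It is also best avoided, since the uniqueness-up-to-homotopy hypothesis is only available on sets $U_i \cap \bigcup_{j \in J} U_j$ built from the original cover.
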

\begin{proof}
	Let:
	\[ S = \{(A, \psi) | A \subset I, \psi: \cup_{\alpha \in A} U_\alpha \to X, p \psi = g, \psi i = f\}. \]
	
	We define a partial order on $S$ by:
	\[ (A, \psi_1) \leq (B, \psi_2)\]
	
	iff $A \subset B$ and $\psi_1 = \psi_2$ on $\cup_{\alpha \in A} U_{\alpha} \setminus \cup_{\beta \in B \setminus A} U_{\beta}$. 
	
	We may as well assume that $B$ is non-empty, in which case taking $J = \{i\}$ in the condition of the lemma shows that $S$ is non-empty. Since $\{U_i\}$ is a locally finite cover, every chain in $S$ has an upper bound. Hence, by Zorn's lemma, $S$ contains a maximal element, say $(A, \psi_1)$. If $A \neq I$, there exists some $\beta \in I \setminus A$. By the condition in the lemma, there exists some $(\{\beta\}, \psi_2) \in S$. By the uniqueness up to relative homotopy condition, there exists a $G$-homotopy $K: (U_{\beta} \cap (\cup_{\alpha \in A} U_{\alpha})) \times I \to X$ between $\psi_1$ and $\psi_2$, relative to $p$ and $i$. Pick $G$-maps $\lambda_1, \lambda_2: U_{\beta} \cup (\cup_{\alpha \in A} U_{\alpha}) \to I$ such that $\cup_{\alpha \in A} U_\alpha = \lambda_1^{-1}((0,1])$, $U_{\beta} = \lambda_2^{-1}((0,1])$ and $\lambda_1 + \lambda_2 = 1$. Define:
	
	\begin{equation*}
		\psi(u) = 
		\begin{cases}
			\psi_1(u) & \text{if} \ \lambda_1 > \frac{3}{4} \\
			\psi_2(u) & \text{if} \ \lambda_1 < \frac{1}{4} \\
			K(u, \frac{3}{2} - 2 \lambda_1) & \text{otherwise} 
		\end{cases}
	\end{equation*}
	
	Then $(A \cup \{\beta\}, \psi) \in S$ with $(A, \psi_1) \leq (A \cup \{\beta\}, \psi)$, a contradiction. So $A = I$ and $\psi_1$ is a solution to the original lifting problem.	
\end{proof}

As an immediate consequence, we have the following gluing result for $\mathcal{C}$-cofibrations:

\begin{theorem} \label{hco21}
	If $i: A \to B$ is a map of $G$-spaces and there is a $G$-numerable open cover $\mathcal{U}$ of $B$ such that for every $U \in \mathcal{U}$, $i|_{A \cap U}: A \cap U \to U$ is a $\mathcal{C}$-cofibration, then $i$ is a $\mathcal{C}$-cofibration.
\end{theorem}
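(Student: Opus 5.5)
To show that $i$ is a $\mathcal{C}$-cofibration, we check that it has the LLP with respect to every $\mathcal{C}$-acyclic $\mathcal{C}$-fibration $p: X \to Y$, and we do this with Lemma \ref{hco19}. Take a lifting square as in (\ref{hco24}) with such a $p$, and use the given $G$-numerable cover $\mathcal{U} = \{U_i\}_{i \in I}$ of $B$. This leaves two hypotheses of Lemma \ref{hco19} to verify for each $i \in I$ and $J \subset I$: existence of a lift on $W = U_i \cap (\cup_{j \in J} U_j)$, and uniqueness of such lifts up to homotopy relative to both $p$ and $i$.

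\textbf{Existence.} The set $W$ is a $G$-equivariant open subspace of $U_i$, and $i|_{A \cap U_i}: A \cap U_i \to U_i$ is a $\mathcal{C}$-cofibration by assumption. Lemma \ref{hco10} therefore shows that $A \cap W \to W$ is a $\mathcal{C}$-cofibration. Note that $i^{-1}(W) = A \cap W$, viewing $A$ as a subspace via the inclusion; the map $i$ is a $\mathcal{C}$-cofibration on each piece, hence injective there, hence an inclusion. The induced lifting problem against the $\mathcal{C}$-acyclic $\mathcal{C}$-fibration $p$ then has a solution.

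\textbf{Uniqueness up to relative homotopy.} Let $\psi_1, \psi_2: W \to X$ be two solutions. We form the lifting problem
\[
W \times \{0,1\} \cup_{(A\cap W) \times \{0,1\}} (A \cap W) \times I \longrightarrow W \times I
\]
against $p$. The top map is $\psi_1 \sqcup \psi_2$ on the ends and $f \circ \mathrm{pr}$ on $(A \cap W) \times I$. The bottom map is $g \circ \mathrm{pr}: W \times I \to Y$. This map is the pushout product of the $\mathcal{C}$-cofibration $A \cap W \to W$ with the $q$-cofibration $\{0,1\} \to I$. Because the $\mathcal{C}$-model structure is topological, it is a $\mathcal{C}$-cofibration, so a lift exists. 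That lift is precisely a $G$-homotopy from $\psi_1$ to $\psi_2$ which is constant on $A \cap W$ (rel $i$) and lies over the constant homotopy of $g$ (rel $p$).

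Lemma \ref{hco19} then produces a solution to the original lifting problem, so $i$ has the LLP against all $\mathcal{C}$-acyclic $\mathcal{C}$-fibrations and is a $\mathcal{C}$-cofibration. The only point needing care is the identification of $i^{-1}(U)$ with $A \cap U$. I expect this to be a non-issue: $\mathcal{C}$-cofibrations are closed inclusions, and the cover covers $B$, so $i$ is an inclusion locally and hence globally.
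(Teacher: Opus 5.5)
Your proposal is correct and follows the paper's proof. Both arguments reduce to Lemma \ref{hco19}: local lifts on $U_i \cap (\cup_{j \in J} U_j)$ come from Lemma \ref{hco10}, and uniqueness up to homotopy rel $p$ and $i$ comes from the $\mathcal{C}$-model structure being topological. Your explicit lifting problem for the pushout-product of $A \cap W \to W$ with $\{0,1\} \to I$ is exactly the detail the paper leaves implicit.
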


\begin{proof}
	We consider a lifting problem for $i$ with respect to a $\mathcal{C}$-acyclic $\mathcal{C}$-fibration, $p: X \to Y$. The $G$-numerable open cover $\mathcal{U}$ ensures the existence of local lifts as in Lemma \ref{hco19}. Uniqueness follows from Lemma \ref{hco10} and the fact that the $\mathcal{C}$-model structure is topological, \cite[Proposition B.5]{S18}, so solutions to such lifting problems are unique up to relative homotopy. Therefore, the result follows from Lemma \ref{hco19}. 
\end{proof}

The corresponding gluing theorem for $h$-cofibrations is \cite[Satz 2]{D68}, whose proof uses the fact that the restriction of an $h$-cofibration to a numerable open subspace is an $h$-cofibration, \cite[Satz 1]{D68}, in place of Lemma \ref{hco10}:

\begin{theorem} \label{hco57}
	Suppose that $\{U_j\}_{j \in J}$ is a $G$-numerable open cover of $B$. If $f:A \to B$ is a map such that $f: f^{-1}(U_j) \to U_j$ is an $hG$-cofibration for all $j \in J$, then $f:A \to B$ is an $hG$-cofibration.
\end{theorem}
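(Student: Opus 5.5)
We prove Theorem \ref{hco57} with Lemma \ref{hco19}, as in Theorem \ref{hco21}, replacing the $\mathcal{C}$-model structure by the $hG$-model structure (Strøm's model structure on $G$-spaces). There the cofibrations are the $hG$-cofibrations and the acyclic fibrations are the $hG$-fibrations that are $G$-homotopy equivalences. An $hG$-cofibration is exactly a closed inclusion with the LLP against all such maps, so it suffices to solve a lifting problem \eqref{hco24} with $i=f$ against an $hG$-acyclic $hG$-fibration $p: X \to Y$. (Closedness of $f(A)$ is local, and each $f^{-1}(U_j)\to U_j$ is a closed embedding, so $f$ is a closed embedding. Alternatively, we could first check that the $hG$-fibration property is what Dold's argument needs.)

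To apply Lemma \ref{hco19}, first set $W = U_j \cap (\cup_{k \in K} U_k)$. This is open in $U_j$, and it is $G$-numerable in $U_j$, since $W = \mu^{-1}((0,1])$ with $\mu = \min(1,\sum_{k\in K}\lambda_k)$ restricted to $U_j$; the sum is locally finite, hence continuous and equivariant. The equivariant form of \cite[Satz 1]{D68} then says that the restriction of the $hG$-cofibration $f^{-1}(U_j) \to U_j$ to the numerable open subspace $W$ is an $hG$-cofibration $f^{-1}(W) \to W$. Concretely, multiply the NDR function $u$ by a function vanishing off $W$ to get an NDR representation of $(W, f^{-1}(W))$, adjusting the homotopy with the standard rescaling.

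Next, existence of local lifts follows because $f^{-1}(W)\to W$ is an $hG$-cofibration and $p$ is an $hG$-acyclic $hG$-fibration. For uniqueness, let $\psi_1,\psi_2$ be two lifts. They give a lifting problem of $f^{-1}(W)\times I \cup W \times \partial I \to W \times I$ against $p$, with the bottom map the constant homotopy of $g$ and the top given by $f$ constant in time on $f^{-1}(W)\times I$ and by $\psi_1,\psi_2$ on the ends. The left map is an $hG$-cofibration, being the pushout-product of an $hG$-cofibration with $\partial I \to I$, because the Strøm model structure is topological. So a lift exists, giving a homotopy from $\psi_1$ to $\psi_2$ rel $p$ and rel $f$. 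Lemma \ref{hco19} now produces a global lift, so $f$ has the LLP against all $hG$-acyclic $hG$-fibrations and is therefore an $hG$-cofibration.

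The main obstacle is the equivariant restriction lemma, Dold's Satz 1. Replacing $u$ by $\max(u, 1-\mu)$-type expressions keeps $f^{-1}(W) = u'^{-1}(0)$, but the retraction homotopy $H$ must be stopped where $\mu$ is small so it stays inside $W$. I would first try reparametrising time by $\min(1, \mu/u)$-type factors, as in Dold's argument. Equivariance is automatic because all auxiliary functions are invariant.
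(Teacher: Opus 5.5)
Your proposal is correct and follows essentially the paper's route: you run the argument of Theorem \ref{hco21} through Lemma \ref{hco19} in the $hG$-model structure, with the equivariant form of Dold's Satz~1 taking the place of Lemma \ref{hco10}, and you correctly check that it applies to $U_j \cap \bigcup_{k\in K} U_k$. Uniqueness of local lifts comes from the pushout-product with $\partial I \to I$. The one caveat is that you should simply cite Satz~1, as the paper does, because your ad hoc formulas for it do not work as written: $\max(u,1-\mu)$ vanishes only on $f(A)\cap\mu^{-1}(1)$ rather than on $f(A)\cap W$, and replacing $u$ by $u\mu$ enlarges the set that must be deformed into $f(A)$.
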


As for $h$-fibrations, \cite[Theorem 4.8]{D63}, with a little more work we can use Lemma \ref{hco19} to prove a gluing result for $\mathcal{C}$-fibrations. In fact, as in the non-equivariant setting, \cite[Theorem 6.3.3]{D08}, we could use a more direct argument using cubical subdivisions of $I^n$ to prove the theorem below. However, the proof we give works in exactly the same way in the $hG$-fibration case, provided we assume that the open cover is $G$-numerable:

\begin{theorem} \label{hco22}
	If $p: X \to Y$ is a map and there exists a $G$-equivariant open cover $\{U_i\}_{i \in I}$ of $Y$ such that $p: p^{-1}(U_i) \to U_i$ is a $\mathcal{C}$-fibration for all $i \in I$, then $p$ is a $\mathcal{C}$-fibration.
\end{theorem}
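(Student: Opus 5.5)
We must show that $p$ has the RLP with respect to the generating $\mathcal{C}$-acyclic cofibrations $J$, that is, against $G/H \times D^n \to G/H \times D^n \times I$ for $H \in \mathcal{C}$. Such a lifting problem consists of $f: G/H \times D^n \to X$ and a homotopy $g: G/H \times D^n \times I \to Y$ with $pf = g|_0$. The plan is to apply Lemma \ref{hco19} with $i: A = G/H \times D^n \times \{0\} \to B = G/H \times D^n \times I$ and the cover $\{g^{-1}(U_i)\}_{i\in I}$ of $B$.

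\textbf{Numerability.} The cover $\{g^{-1}(U_i)\}$ need not be locally finite or $G$-numerable. However, $B$ is $G$-compact: $B/G \cong D^n \times I$ is compact metrisable and $B$ is a $G$-manifold with boundary. So I would first refine the cover.
\begin{itemize}
\item Choose a finite subcover.
\item Use a $G$-invariant partition of unity on $B$ subordinate to it, obtained by pulling back a partition of unity from $B/G$, which is compact Hausdorff and hence paracompact.
\item This yields a finite $G$-numerable cover $\{V_k\}$ with each $g(V_k) \subset U_{i(k)}$.
\end{itemize}
Since a $\mathcal{C}$-fibration is in particular a fibration, and the problem over $V_k$ maps into $U_{i(k)}$, it suffices to solve lifting problems for the restricted map $p: p^{-1}(U_{i(k)}) \to U_{i(k)}$.

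\textbf{Local hypotheses of Lemma \ref{hco19}.} For each $k$ and finite union $W = V_k \cap \bigcup_{j\in J} V_j$, we need two things.
\begin{itemize}
\item \emph{Existence:} a solution to the restricted lifting problem of $i_W: W \cap (G/H\times D^n\times\{0\}) \to W$ against $p^{-1}(U_{i(k)}) \to U_{i(k)}$.
\item \emph{Uniqueness:} any two solutions are homotopic rel $i_W$ and rel $p$.
\end{itemize}
Lemma \ref{hco10}, applied to the $\mathcal{C}$-acyclic cofibration $G/H \times D^n \to G/H \times D^n \times I$ and the open set $W$, shows that $i_W$ is a $\mathcal{C}$-cofibration. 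I also need it to be acyclic. For this I would argue directly: $W$ is $G$-homotopy equivalent rel $W\cap(\cdot\times\{0\})$ to that subspace, via pushing down along the $I$-coordinate. That pushing is not obviously possible for an arbitrary open $W$.

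\textbf{Main obstacle.} I expect the acyclicity just described to be the main obstacle. To get around it, I would first replace the cover by one adapted to the product structure. Using a Lebesgue-number argument on the compact orbit space, subdivide $I$ into intervals $[t_m, t_{m+1}]$ and cover $G/H\times D^n$ by $G$-open sets $O_l$ such that each $O_l \times [t_m,t_{m+1}]$ maps into some $U_i$. This reduces to lifting $g$ on one time slab at a time. Within a slab I apply Lemma \ref{hco19} to the cover $\{O_l \times [t_m, t_{m+1}]\}$. The relevant restricted maps are now
\[ O \times \{t_m\} \to O \times [t_m,t_{m+1}] \]
with $O$ open in $G/H \times D^n$. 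These are $\mathcal{C}$-cofibrations by Lemma \ref{hco10} together with the pushout-product property in a topological model category, and they are visibly $G$-homotopy equivalences, hence $\mathcal{C}$-acyclic.

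\textbf{Finishing.} With acyclicity in hand, existence of local lifts follows because $p$ restricted over $U_i$ is a $\mathcal{C}$-fibration. Uniqueness up to homotopy rel $i$ and rel $p$ follows because the $\mathcal{C}$-model structure is topological, exactly as in the proof of Theorem \ref{hco21}. Lemma \ref{hco19} then gives a lift on each slab, and concatenating the slabs in $m$ gives the required lift.
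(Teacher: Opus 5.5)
Your proof is correct. It differs from the paper's in how the time direction is subdivided.

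\textbf{The paper's route.} It tests against $A\to A\times I$ for an arbitrary $\mathcal{C}$-cofibrant $A$. It first pulls the problem back so that the base is paracompact and the cover is $G$-numerable. It then uses the functions $\mu_S$, indexed by finite sequences $S=(i_0,\dots,i_{n-1})$, to build a single $G$-numerable cover $\{V_S\times I\}$ of $A\times I$ with $H(V_S\times[\frac{j}{n},\frac{j+1}{n}])\subset U_{i_j}$. So the subdivision of $I$ varies from point to point of $A$. Lemma \ref{hco19} is applied once, and existence and relative uniqueness on each $V_S\times I$ are checked by induction over the $n$ intervals, with Corollary \ref{hco11} supplying cofibrancy of open subsets of $A$.

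\textbf{Your route.} You reduce to the maps in $J$ and use that $G/H\times D^n\times I$ has compact metric orbit space $D^n\times I$, so a Lebesgue number gives one uniform subdivision $t_0<\dots<t_N$. You then apply Lemma \ref{hco19} slab by slab to a finite cover. Each local problem is a single acyclic $\mathcal{C}$-cofibration $W\times\{t_m\}\to W\times[t_m,t_{m+1}]$, so existence and relative uniqueness each take one lift. Finally you concatenate the slabs.

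\textbf{What each buys.} Your argument is more elementary. It needs no $\mu_S$ construction and no reduction to a paracompact base, and the Zorn argument in Lemma \ref{hco19} becomes a finite induction. You also only need cofibrancy of open $G$-subsets of a single cell, essentially the input of Example \ref{hco5}, rather than of open subsets of an arbitrary cofibrant space. The paper's argument does not depend on compactness of the test objects. That is why it transfers verbatim to $hG$-fibrations over $G$-numerable covers (the theorem that follows), where one must lift against arbitrary $X\times\{0\}\to X\times I$ and no Lebesgue number is available.

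Your preliminary attempt with a general finite refinement can be dropped. Your diagnosis there is right: for an arbitrary open $W$, the map $W\cap(\cdot\times\{0\})\to W$ need not be a weak equivalence. Both proofs avoid this the same way, by working only with covers of product form $V\times I$ or $O\times[t_m,t_{m+1}]$.
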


\begin{proof}
	
	Consider a lifting problem:
	
	\[
	\begin{tikzcd}
		A \arrow{r}{f} \arrow[swap]{d}{i} & X \arrow{d}{p} \\
		A \times I \arrow[swap]{r}{H} & Y 
	\end{tikzcd}
	\]
	
	where $A$ is $\mathcal{C}$-cofibrant and $i: A \to A \times I$ is the inclusion of $A \times \{0\}$. If $Y$ is not paracompact Hausdorff then we can pull the lifting problem back along $H$ to reduce to the case of a lifting problem involving $i$ and $q$, where $q$ is the pullback of $p$. Note that there is an open cover $\{V_i := H^{-1}(U_i)\}$ of $A \times I$ such that  $q: q^{-1}(U_i) \to U_i$ is a $\mathcal{C}$-fibration for all $i \in I$. Moreover, the base of $q$ is paracompact Hausdorff, so $\{V_i\}$ has a $G$-numerable refinement. Therefore, we assume from now on that the open cover $\{U_i\}$ is $G$-numerable.
	
	Choose functions, $\lambda_i: Y \to I$, for every $i \in I$, such that $U_i = \lambda_i^{-1}((0,1])$ and $\sum_{i \in I} \lambda_i = 1$. Suppose that $S = (i_0,...,i_{n-1})$ is a finite sequence of elements of $I$, so, in particular, the members of the sequence need not be distinct. Define a function $\mu_S: A \to I$ by:
	
	\begin{equation} \label{hco23} 
		\mu_S(a) = \max \{\min_{j \in \{0...,n-1\}} \left(\inf_{t \in [\frac{j}{n}, \frac{j+1}{n}]} \lambda_{i_j}(H(a,t)) \right) - n \sum_{T| |T| < n} \mu_T, 0\}
	\end{equation}
	
	Then $\{V_S := \mu_S^{-1}((0,1])\}$ is a $G$-numerable open cover of $A$, indexed over finite sequences of elements of $I$. Therefore, $\{V_S \times I\}$ is a $G$-numerable open cover of $A \times I$. Moreover, by inspection of equation (\ref{hco23}), we see that $H(V_S \times [\frac{j}{n}, \frac{j+1}{n}]) \subset U_{i_j}$ for every $j$. It follows that the existence condition of Lemma \ref{hco19} is satisfied, since $V_S$ is $\mathcal{C}$-cofibrant by Corollary \ref{hco11} and so we can construct a lift inductively on these intervals. Similarly, we can construct relative homotopies between any two lifts inductively, and so the uniqueness condition of Lemma \ref{hco19} is also satisfied, which gives the result.
\end{proof}

Finally, we record the corresponding gluing result for $hG$-fibrations, which follows as above or as in \cite[Theorem 4.8]{D63}:

\begin{theorem}
	If $p: X \to Y$ is a map and there exists a $G$-numerable open cover $\{U_i\}_{i \in I}$ of $Y$ such that $p: p^{-1}(U_i) \to U_i$ is an $hG$-fibration for all $i \in I$, then $p$ is an $hG$-fibration.
\end{theorem}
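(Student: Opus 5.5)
We argue exactly as in the proof of Theorem \ref{hco22}, with the $\mathcal{C}$-cofibrancy input replaced by the covering homotopy property for $hG$-fibrations. Take a lifting problem of $i: A \to A \times I$, the inclusion at $0$, against $p$, with $f: A \to X$, $H: A \times I \to Y$, and $A$ an arbitrary $G$-space. Pulling back along $H$ reduces us to a map $q$ over $A \times I$. The cover $\{H^{-1}(U_i)\}$ is $G$-numerable, being the pullback of a $G$-numerable cover, and the restrictions of $q$ to its members are pullbacks of $hG$-fibrations, hence $hG$-fibrations. We do not need paracompactness here, because $G$-numerability of the original cover is part of the hypothesis.

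Next, choose a partition of unity $\{\lambda_i\}$ subordinate to the cover and define $\mu_S$ by equation (\ref{hco23}). As before, $\{V_S \times I\}$ is a $G$-numerable open cover of $A \times I$, and $H(V_S \times [\frac{j}{n},\frac{j+1}{n}]) \subset U_{i_j}$. We then verify the two hypotheses of Lemma \ref{hco19} for the inclusion $i$ and this cover, indexing by $V_S$. The relevant pieces have the form $W \times I$, where $W = V_S \cap \bigcup_{T \in J} V_T$ is an open $G$-subspace of $A$, and we have $i^{-1}(W \times I) = W \times \{0\}$.

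\emph{Existence.} On $W \times [\frac{j}{n},\frac{j+1}{n}]$ the homotopy lands in $U_{i_j}$, so we lift interval by interval. At each stage we use that $p^{-1}(U_{i_j}) \to U_{i_j}$ has the homotopy lifting property for every $G$-space, including $W$ (after reparametrising the interval), starting from the endpoint of the previous lift.

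\emph{Uniqueness up to relative homotopy.} Let $\psi_1, \psi_2$ be two lifts on $W \times I$ agreeing on $W \times \{0\}$. We need a homotopy between them rel $i$ and rel $p$. This amounts to a lifting problem for $W \times (I \times \{0,1\} \cup \{0\} \times I) \to W \times I \times I$, where the square is given by $H \circ \mathrm{pr}$ and is constant in the homotopy coordinate. We again work slab by slab, over $W \times [\frac{j}{n},\frac{j+1}{n}] \times I$, inductively in $j$. Each step is a lifting problem of $W \times J' \to W \times I^2$ against an $hG$-fibration, where $J'$ is the union of three sides of a square. Since $(I^2, J')$ is homeomorphic to $(I \times I, I \times \{0\})$ and the homeomorphism is compatible with the constant-in-one-direction data, the covering homotopy property solves it. Lemma \ref{hco19} then produces a global lift.

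The main obstacle is the uniqueness step. The homotopy must be stationary over $Y$ (rel $p$), and it must glue across consecutive slabs, so each slab problem has to include the face already constructed at $t = \frac{j}{n}$. I would handle both points by using the standard retraction $I^2 \to J'$ and checking that the prescribed data on the bottom face and the two side faces agree on their intersections. Everything else, namely the bookkeeping of $\mu_S$ and the numerability of $\{V_S \times I\}$, is exactly as in the proof of Theorem \ref{hco22}.
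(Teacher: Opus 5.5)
Your proposal is correct and follows the paper's own route. The paper proves this theorem by running the argument of Theorem \ref{hco22} again: the $\mathcal{C}$-cofibrancy of the sets $V_S$ (Corollary \ref{hco11}) is replaced by the homotopy lifting property of $hG$-fibrations for arbitrary $G$-spaces, and the paracompactness reduction is dropped because $G$-numerability is assumed. That is exactly what you do, and your slab-by-slab treatment of the relative-homotopy condition of Lemma \ref{hco19} simply spells out what the paper summarises as ``similarly''. One minor point: the homeomorphism $(I^2,J')\cong(I\times I, I\times\{0\})$ needs no compatibility with the constant-in-$s$ base data, since the covering homotopy property applies to any base homotopy of $W\times I$.
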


\section{Applications to $n$-fold products and the diagonal map} \label{hco83}

Recall that the wreath product $G \wr \Sigma_n$ is defined to be the semidirect product $G^n \rtimes \Sigma_n$ induced by the permutation action of $\Sigma_n$ on $G^n$.
\begin{definition}
	Let $\mathcal{D}_n$ be the set of isotropy groups of elements of $I^n$ under the action of the symmetric group, $\Sigma_n$, given by coordinate permutation. Let $\mathcal{C}$ be a set of subgroups of $G$ closed under conjugacy. Define $\mathcal{C} \wr \mathcal{D}_n$ to be the conjugate closure of the set of subgroups of $G \wr \Sigma_n$ of the form $(H_1 \times ... \times H_n) \rtimes Q$, where $Q \in \mathcal{D}_n$, each $H_i \in \mathcal{C}$ and, for all $i$, if $\sigma \in Q$ then $H_i = H_{\sigma(i)}$.
\end{definition}

\begin{lemma} \label{hco59}
	If $X$ is a $\mathcal{C}$-cofibrant $G$-space, then $X^n$ is $(\mathcal{C} \wr \mathcal{D}_n)$-cofibrant as a $(G \wr \Sigma_n)$-space.
\end{lemma}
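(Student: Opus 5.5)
By a retract argument, we may assume that $X$ is a $\mathcal{C}$-cell complex. Indeed, if $X$ is a retract of a cell complex $Z$, then $X^n$ is a $(G\wr\Sigma_n)$-equivariant retract of $Z^n$. We then write $X$ as a transfinite composite $\emptyset = X_0 \to X_1 \to \cdots$, where each $X_\lambda \to X_{\lambda+1}$ is a pushout of a map $G/H \times S^{k-1} \to G/H \times D^k$ with $H \in \mathcal{C}$. Our plan is to filter $X^n$ equivariantly. For a map $f: A \to B$ we use the iterated pushout-product $f^{\widehat{\times} n}: Q_n(f) \to B^n$, and more generally the filtration of $B^n$ by the subspaces $Q^j_n(f)$ consisting of points with at least $n-j$ coordinates in $A$. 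Each step of this filtration is a $(G\wr\Sigma_n)$-equivariant pushout of maps of the form
\[
(G \wr \Sigma_n) \times_{(G\wr\Sigma_j)\times(G\wr\Sigma_{n-j})} \bigl( f^{\widehat{\times} j} \times A^{n-j} \bigr).
\]
This is the standard Day/symmetric-power filtration. Applied to $f = (X_\lambda \to X_{\lambda+1})$ and iterated transfinitely, it reduces the problem to two things. First, the products $X_\lambda^{n-j}$ must be cofibrant for smaller wreath products, which we get by induction on $n$ together with transfinite induction. Second, we need to show the key case: for a single generating cell $c: G/H \times S^{k-1} \to G/H \times D^k$, the map $c^{\widehat{\times} j}$ is a $(\mathcal{C}\wr\mathcal{D}_j)$-cofibration of $(G\wr\Sigma_j)$-spaces. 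We also need to check that pushout-products and inductions along subgroup inclusions preserve the relevant cofibrations. Since the families are closed under products and induction is left adjoint to restriction, this follows from the generating-cell computation $G/K\times G/K' = $ a union of orbits whose isotropy lies in the product family.

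For the key case we follow Example \ref{hco4}. The map $c^{\widehat{\times} j}$ is $(G/H)^j \times \bigl(Q_j(S^{k-1}\to D^k) \to (D^k)^j\bigr)$. The inclusion $Q_j \to (D^k)^j$ is an $h\Sigma_j$-cofibration by \cite[Lemma A.4]{M72}, and the factor $(G/H)^j$ is compatible with this, so $c^{\widehat{\times} j}$ is an $h(G\wr\Sigma_j)$-cofibration between smooth $(G\wr\Sigma_j)$-manifolds (with corners) that are compact Lie group actions. By \cite[Corollary 7.2]{I83}, both spaces are $(G\wr\Sigma_j)$-CW complexes. We then compute isotropy groups: a point $((g_1H,x_1),\dots,(g_jH,x_j))$ is fixed by $(h_1,\dots,h_j;\sigma)$ exactly when $\sigma$ fixes the tuple $(g_iH, x_i)$ in the appropriate sense. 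Its stabilizer is therefore conjugate to $(g_1Hg_1^{-1}\times\cdots\times g_jHg_j^{-1}) \rtimes Q$, with $Q$ the stabilizer of a partition of $\{1,\dots,j\}$, and equal conjugates of $H$ occur along each block. Since $\mathcal{D}_j$ consists precisely of the Young subgroups, these stabilizers lie in $\mathcal{C}\wr\mathcal{D}_j$, up to conjugating to make the blocked factors literally equal. Hence both spaces are $(\mathcal{C}\wr\mathcal{D}_j)$-cofibrant, and Lemma \ref{hco2} (or Theorem \ref{hco6}) shows that $c^{\widehat{\times} j}$ is a $(\mathcal{C}\wr\mathcal{D}_j)$-cofibration.

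We expect the main obstacle to be the bookkeeping in the filtration step. Specifically, we must check that inducing up from $(G\wr\Sigma_j)\times(G\wr\Sigma_{n-j})$ to $G\wr\Sigma_n$, and taking the product of a $(\mathcal{C}\wr\mathcal{D}_j)$-cofibration with a $(\mathcal{C}\wr\mathcal{D}_{n-j})$-cofibrant space, yields a $(\mathcal{C}\wr\mathcal{D}_n)$-cofibration. On generating cells this again amounts to showing that the isotropy groups $K\times K'$, with $K\in\mathcal{C}\wr\mathcal{D}_j$ and $K'\in\mathcal{C}\wr\mathcal{D}_{n-j}$, lie in $\mathcal{C}\wr\mathcal{D}_n$. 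This holds because a product of Young subgroups is a Young subgroup. If the filtration argument becomes too messy, the fallback is to avoid it altogether. We would note directly that $X^n$, for $X$ a cell complex, has a $(G\wr\Sigma_n)$-cell-like decomposition into products of cells. Its successive inclusions are $h(G\wr\Sigma_n)$-cofibrations between spaces that are locally manifolds of the above type, and we would then apply Lemma \ref{hco2} cell by cell.
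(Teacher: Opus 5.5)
Your proposal is correct, but it is organized differently from the paper's proof.

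\textbf{The paper's route.} The paper never filters $X$ one cell at a time. It regards $X^n$ directly as built from ``pseudocells'' $G/H_1\times\cdots\times G/H_n\times D^m$, which are products of $n$ cells of $X$. It runs a Zorn's lemma argument over chains of $\Sigma_n$-stable subpseudocomplexes and attaches the whole $\Sigma_n$-orbit of one such product at a time. Each attachment is an $h(G\wr\Sigma_n)$-cofibration between $(G\wr\Sigma_n)$-manifolds with isotropy in $\mathcal{C}\wr\mathcal{D}_n$, so \cite[Corollary 7.2]{I83} and Theorem \ref{hco6} apply. That is essentially your fallback.

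\textbf{Your route.} Your symmetric-power filtration confines all the geometric input (\cite[Lemma A.4]{M72}, Illman's theorem, Theorem \ref{hco6}) to the single case $c^{\widehat{\times} j}=(G/H)^j\times i^{\widehat{\times} j}$. This is Example \ref{hco4} with an extra orbit factor. Everything else reduces to formal facts:
\begin{itemize}
\item pushouts and products of cofibrations;
\item induction along $(G\wr\Sigma_j)\times(G\wr\Sigma_{n-j})\subset G\wr\Sigma_n$, which preserves cofibrations because a product of Young subgroups is a Young subgroup, so $K\times K'\in\mathcal{C}\wr\mathcal{D}_n$;
\item transfinite composition, using $X^n=\operatorname{colim}_\lambda X_\lambda^n$. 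This holds in CGWH since products commute with colimits in each variable and the diagonal of a directed set is cofinal.
\end{itemize}

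\textbf{Trade-off.} You pay with an induction on $n$ and the filtration lemma. In return, the order of attachments is automatic: there is no Zorn's lemma and no well-foundedness argument for pseudocells. You also never have to handle a stabilizer of a tuple of cells that permutes repeated factors of $D^m$. The paper has to absorb that into its description of the attached orbit.

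\textbf{Two small corrections.}
\begin{itemize}
\item The stabilizer of $((g_1H,x_1),\dots,(g_jH,x_j))$ is conjugate via $(g_1,\dots,g_j;1)$ to $H^j\rtimes Q$, where $Q=\mathrm{Stab}_{\Sigma_j}(x_1,\dots,x_j)$ is a Young subgroup. Your intermediate group $(g_1Hg_1^{-1}\times\cdots\times g_jHg_j^{-1})\rtimes Q$ need not be a subgroup.
\item $(G\wr\Sigma_j)/K\times(G\wr\Sigma_{n-j})/K'$ is a single orbit of the product group with isotropy $K\times K'$, not a union of orbits.
\end{itemize}
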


\begin{proof}
	Since $X$ is a retract of a $\mathcal{C}$-cell complex, we can reduce to the case where $X$ is a $\mathcal{C}$-cell complex. For the purposes of this proof, define a $\mathcal{C}$-pseudocomplex to be a complex built out of pushouts of maps of the form $A \times S^{n-1} \to A \times D^n$ where $A$ is a finite product of $G$-spaces of the form $G / H$ with $H \in \mathcal{C}$. In particular, if $X$ is a $\mathcal{C}$-cell complex, then $X^n$ inherits the structure of a $\mathcal{C}$-pseudocomplex. Let $\mathcal{E}$ denote the category of $(\mathcal{C} \wr \mathcal{D}_n)$-cofibrant $\Sigma_n$-equivariant $\mathcal{C}$-subpseudocomplexes of $X^n$ and inclusions which are $(\mathcal{C} \wr \mathcal{D}_n)$-cofibrations. Let $S_{\lambda}$ denote the set of $\lambda$-sequences in $\mathcal{E}$ such that none of the successor maps are identity cofibrations, \cite[Definition 10.2.1]{H03}, and let $S = \cup{S_\lambda}$ which is still a set due to the restriction on identity maps. We can define a partial order on $S$ by $F \leq G$ iff $F = G$ or $F$ is the restriction of $G$ to some lesser ordinal. Then $S$ is non-empty and every chain has an upper bound, so $S$ has a maximal element by Zorn's lemma, say $F: \lambda \to \mathcal{E}$. Note that $\lambda$ will be a successor ordinal since otherwise we could take a union which contradicts maximality. 
	
	Suppose that $M := F_{\lambda - 1} \neq X^{n-1}$. Then we can attach a $\mathcal{C}$-pseudocell to $M$ along an attaching map $\alpha: G/H_1 \times ... \times G/H_n \times S^{m-1} \to M$, corresponding to a pushout-product of $n$-cells from $X$. Now $\Sigma_n$ acts on $n$-tuples of cells of $X$, and therefore on the orbit $\Sigma_n / K$ of the $n$-tuple inducing $\alpha$, where $K$ is the stabiliser of the $n$-tuple inducing $\alpha$. Since $M$ is $\Sigma_n$-equivariant, none of the permuted pseudocells are contained in $M$, so the pushout below defines a $\Sigma_n$-equivariant $\mathcal{C}$-subpseudocomplex, $M^{'}$, of $X^n$: 
	
		\[
	\begin{tikzcd}
		\sqcup_{\sigma \in \Sigma_n / K} G / H_{\sigma^{-1}(1)} \times ... \times G / H_{\sigma^{-1}(n)} \times S^{m-1} \arrow{r}{} \arrow[swap]{d}{i} & M \arrow{d}{} \\
		\sqcup_{\sigma \in \Sigma_n / K} G / H_{\sigma^{-1}(1)} \times ... \times G / H_{\sigma^{-1}(n)} \times D^m \arrow[swap]{r}{} & M^{'} 
	\end{tikzcd}
	\]
	
	Since the domain and codomain of $i$ are both $(G \wr \Sigma_n)$-manifolds with isotropy groups in $\mathcal{C} \wr \mathcal{D}_n$, they are $(\mathcal{C} \wr \mathcal{D}_n)$-cofibrant by \cite[Corollary 7.2]{I83}. Moreover $i$ is an $h(G \wr \Sigma_n)$-cofibration since it is the product of the $(G \wr \Sigma_n)$-space $\sqcup_{\sigma \in \Sigma_n / K} G/H_{\sigma^{-1}(1)} \times ... \times G/H_{\sigma^{-1}(n)}$ with the $h(G \wr \Sigma_n)$-cofibration $S^{m-1} \to D^m$, which is an $h(G \wr \Sigma_n)$-cofibration by \cite[Lemma A.4]{M72} and the fact that $G$ acts trivially on $S^{m-1}$ and $D^m$. Therefore, $i$ is a $(\mathcal{C} \wr \mathcal{D}_n)$-cofibration by Theorem \ref{hco6}, so $M \to M^{'}$ is a $(\mathcal{C} \wr \mathcal{D}_n)$-cofibration, a contradiction. So, in fact, $X^{n}$ is $(\mathcal{C} \wr \mathcal{D}_n)$-cofibrant.
\end{proof}

If we forget the symmetric action and allow $G$ to act diagonally on $X^n$, then Lemma \ref{hco17} has the following consequence:

 \begin{corollary}
 	If $\mathcal{C}$ is closed under conjugacy and finite intersections and $X$ is a $\mathcal{C}$-cofibrant $G$-space, then $X^n$ is $\mathcal{C}$-cofibrant as a $G$-space.
 \end{corollary}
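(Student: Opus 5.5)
The plan is to restrict the group action along the diagonal inclusion $\Delta_G: G \to G \wr \Sigma_n$, $g \mapsto ((g,\dots,g), 1)$, and then apply Lemma \ref{hco17}. By Lemma \ref{hco59}, $X^n$ is $(\mathcal{C} \wr \mathcal{D}_n)$-cofibrant as a $(G \wr \Sigma_n)$-space. The diagonal copy of $G$ is a closed subgroup of the compact Lie group $G \wr \Sigma_n$, and its restricted action on $X^n$ is exactly the diagonal $G$-action. Lemma \ref{hco17} therefore shows that $X^n$ is $((\mathcal{C} \wr \mathcal{D}_n) \cap \Delta_G(G))$-cofibrant as a $G$-space. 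Lemma \ref{hco17} needs its family of subgroups to be closed under conjugacy, and $\mathcal{C} \wr \mathcal{D}_n$ is defined to be conjugation closed, so this hypothesis holds.

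It then remains to identify the family $(\mathcal{C} \wr \mathcal{D}_n) \cap \Delta_G(G)$ and show that, as a set of subgroups of $G$, it is contained in $\mathcal{C}$. Once that is done we are finished, because a $\mathcal{C}'$-cofibrant space is $\mathcal{C}$-cofibrant whenever $\mathcal{C}' \subset \mathcal{C}$: the generating cofibrations for $\mathcal{C}'$ are among those for $\mathcal{C}$.

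To compute the intersection, first take a generator $K = (H_1 \times \dots \times H_n) \rtimes Q$ of $\mathcal{C} \wr \mathcal{D}_n$. An element $((g,\dots,g),1)$ lies in $K$ exactly when $g \in H_1 \cap \dots \cap H_n$, since the permutation component is trivial and $1 \in Q$. So the intersection is $\Delta_G(H_1 \cap \dots \cap H_n)$, and $H_1 \cap \dots \cap H_n \in \mathcal{C}$ because $\mathcal{C}$ is closed under finite intersections.

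Next consider a conjugate $cKc^{-1}$ with $c = ((c_1,\dots,c_n),\tau)$. The intersection with the diagonal corresponds to the set of $g$ such that $c^{-1}((g,\dots,g),1)c \in K$. Conjugating a diagonal element by the permutation $\tau$ leaves it diagonal, and conjugating by $(c_1,\dots,c_n)$ turns it into $((c_1^{-1}gc_1, \dots, c_n^{-1}gc_n),1)$, possibly with the indices permuted by $\tau$. Such an element lies in $K$ iff $c_{\tau(i)}^{-1} g c_{\tau(i)} \in H_i$ for each $i$ (with a suitable indexing convention). Hence the intersection is $\bigcap_i c_{\tau(i)} H_i c_{\tau(i)}^{-1}$, which is a finite intersection of conjugates of members of $\mathcal{C}$ and so lies in $\mathcal{C}$.

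The only step needing care is this second computation. One has to check that the permutation part of $c$ does not produce any non-diagonal contributions and that the indexing convention for the semidirect product is handled correctly. Everything else is a formal combination of Lemma \ref{hco59} and Lemma \ref{hco17}.
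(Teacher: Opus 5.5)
Your proposal is correct and follows the paper's route: the paper obtains this corollary by restricting the $(G \wr \Sigma_n)$-action from Lemma \ref{hco59} to the diagonal copy of $G$ and applying Lemma \ref{hco17}. Your explicit check that each $K \cap \Delta_G(G)$ is a finite intersection of conjugates of members of $\mathcal{C}$, and your remark that $\mathcal{C}' \subset \mathcal{C}$ implies $\mathcal{C}'$-cofibrant spaces are $\mathcal{C}$-cofibrant, correctly fill in the steps the paper leaves implicit.
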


Before moving on to consider the diagonal map, we record the following lemma which we will need later on:

\begin{lemma} \label{hco58}
	If $M$ is a smooth $G$-manifold with boundary, then the inclusion, $i$, of the union of the (topological) boundary of $M^n$ and the diagonal $\Delta_M$ into $M^n$ is an $h(G \times \Sigma_n)$-cofibration.
\end{lemma}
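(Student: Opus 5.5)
We will prove that $(M^n, \partial(M^n) \cup \Delta_M)$ is an $h(G \times \Sigma_n)$-cofibration by producing a $(G\times\Sigma_n)$-equivariant NDR-pair structure, or equivalently an equivariant retraction of $M^n \times I$ onto $M^n \times \{0\} \cup (\partial(M^n)\cup\Delta_M)\times I$. The union of two closed subspaces $A_1 = \partial(M^n)$ and $A_2 = \Delta_M$ is an $h$-cofibration provided each inclusion $A_1 \to M^n$, $A_2 \to M^n$ and $A_1 \cap A_2 \to A_2$ is an $h$-cofibration. This is the standard union lemma for NDR-pairs: take $\lambda = \min(\lambda_1,\lambda_2)$ and glue the homotopies. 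Its proof is natural, so it holds equivariantly whenever all the data are $(G\times \Sigma_n)$-equivariant. We therefore reduce to three separate claims.

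\emph{The boundary.} The inclusion $\partial(M^n) \to M^n$ is, by \cite[Lemma A.4]{M72}, the iterated pushout-product $j^{\widehat{\times} n}$ of the $hG$-cofibration $j: \partial M \to M$, and it is a $h(G\times\Sigma_n)$-cofibration. Here $j$ itself is an $hG$-cofibration by the equivariant collar theorem \cite[Theorem 3.5]{K07}. This is exactly the argument used in Example \ref{hco4}.

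\emph{The intersection.} $A_1 \cap A_2 = \Delta_{\partial M}$, and under the $\Sigma_n$-invariant homeomorphism $\Delta: M \cong \Delta_M$ (with $\Sigma_n$ acting trivially) the inclusion $\Delta_{\partial M} \to \Delta_M$ becomes $\partial M \to M$. This is again an $hG$-cofibration by the collar theorem, and trivially $\Sigma_n$-equivariant.

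\emph{The diagonal.} The main obstacle is showing that $\Delta_M \to M^n$ is an $h(G\times\Sigma_n)$-cofibration. My plan is to use a $G$-invariant Riemannian metric on $M$, which exists since $G$ is compact; its product metric on $M^n$ is $G\times\Sigma_n$-invariant. With respect to this metric, $\Delta_M$ is a closed submanifold (with boundary), and the normal bundle of $\Delta_M$ is identified with $TM\otimes(\mathbb{R}^n/\mathbb{R})$, which carries an equivariant action. The first approach I would try is to take an equivariant tubular neighbourhood. To handle the boundary of $M$, I would first enlarge $M$ by an equivariant collar to an open $G$-manifold $M^+ = M\cup_{\partial M}\partial M\times[0,1)$, where the equivariant tubular neighbourhood theorem applies to $\Delta_{M^+}\subset (M^+)^n$. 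Radial deformation in the tube then gives an equivariant function $\lambda$ and homotopy $H$ representing $(\ (M^+)^n, \Delta_{M^+})$ as an NDR-pair.

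The delicate point is that this deformation need not preserve $M^n$. To restrict it back to $M^n$, I would instead use the fibrewise convex structure near the diagonal, sending $(x_i)$ to points along geodesics toward a barycentre. If this proves awkward near the boundary, the fallback is a cleaner route: observe that $\Delta_M \to M^n$ is a closed $G\times\Sigma_n$-invariant smooth submanifold after the corners are smoothed. One could then apply Illman \cite[Corollary 7.2]{I83} to triangulate the pair $(M^n, \partial(M^n)\cup\Delta_M)$ equivariantly as a $G\times\Sigma_n$-CW pair, whose inclusion is automatically an $h$-cofibration. I expect this triangulation fallback to be the most robust way to finish.
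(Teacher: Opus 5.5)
Your decomposition via a union theorem is a reasonable alternative to the paper's argument. The paper instead localises using Dold's Satz~1 and the gluing theorem (Theorem~\ref{hco57}), then treats points of $\partial(M^n)\cap\Delta_M$ in the collar model $\partial M\times[0,1)$ by factoring $i$ as $k\circ j$. Your boundary and intersection steps are fine.

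The union lemma, however, is not proved by taking $\lambda=\min(\lambda_1,\lambda_2)$ and gluing the homotopies. $H_1$ does not fix $A_2$, $H_2$ does not fix $A_1$, and switching between them across $\{\lambda_1=\lambda_2\}$ is discontinuous. What you need is Lillig's union theorem: if $A_1$, $A_2$ and $A_1\cap A_2$ are closed cofibrations in $X$, so is $A_1\cup A_2$. You should cite it and check that its construction, which is built from the given NDR data, stays equivariant.

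The genuine gap is the step you yourself call the main obstacle: you never establish that $\Delta_M\to M^n$ is an $h(G\times\Sigma_n)$-cofibration.
\begin{itemize}
\item The geodesic-to-barycentre deformation stays in $M^n$ only if the invariant metric is a product near $\partial M$, and you do not arrange this. For a general metric, geodesics and barycentres of nearby points of $M$ leave $M$; think of the complement of an open disc in $\mathbb{R}^2$ with the flat metric.
\item The Illman fallback is not supported by \cite[Corollary 7.2]{I83}. That result gives a $G$-CW structure on a smooth $G$-manifold, not a CW pair structure on the manifold with corners $M^n$ in which the non-smooth set $\partial(M^n)\cup\Delta_M$ is a subcomplex.
\item Smoothing corners is delicate precisely along $\Delta_{\partial M}\subset(\partial M)^n$, the deepest corner stratum, which is where all the difficulty lies. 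If that fallback did work, it would prove the lemma outright and make your decomposition redundant.
\end{itemize}

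The missing idea is a retraction. Let $\rho\colon M^+\to M$ be the $G$-map collapsing the external collar $\partial M\times[0,1)$ onto $\partial M$. Then $\rho^n$ is $(G\times\Sigma_n)$-equivariant, is the identity on $M^n$, and maps $\Delta_{M^+}$ onto $\Delta_M$. So $\Delta_M\to M^n$ is a retract of $\Delta_{M^+}\to(M^+)^n$. Concretely, take an equivariant NDR-pair $(H^+,\lambda^+)$ for $((M^+)^n,\Delta_{M^+})$, obtained from \cite[Theorem 4.4, Lemma 4.2]{K07} exactly as the paper does for the interior. Then $H=\rho^n\circ H^+|_{M^n\times I}$ and $\lambda=\lambda^+|_{M^n}$ represent $(M^n,\Delta_M)$. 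With this and Lillig's theorem your route goes through, and it avoids the paper's localisation and corner analysis at the cost of importing the union theorem.
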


\begin{proof}
	If $U$ is the interior of $M$, then the diagonal map $\Delta: U \to U^n$ is an $h(G \times \Sigma_n)$-cofibration by \cite[Theorem 4.4, Lemma 4.2]{K07}. Similarly, $\partial M \to M$ is an $hG$-cofibration by \cite[Theorem 3.5]{K07} and so $\partial(M^n) \to M^n$ is an $h(G \times \Sigma_n)$-cofibration by Lemma \ref{hco56}. It follows that for any point, $x$, not in the intersection of $\Delta_M$ and $\partial(M^n)$, there is a $(G \times \Sigma_n)$-equivariant open neighbourhood $V$ of $x$ such that $i \cap V$ is an $h(G \times \Sigma_n)$-cofibration, by \cite[Satz 1]{D68}. Since $M^n$ is paracompact Hausdorff any $(G \times \Sigma_n)$-equivariant open cover has a $(G \times \Sigma_n)$-numerable refinement, so by Theorem \ref{hco57} it suffices to find such $V$ around points in $\partial(M^n) \cap \Delta_M$. By \cite[Theorem 3.5]{K07}, we can reduce to the case where $M = \partial M \times [0,1)$. In this case $i$ is the composite $\partial (M^n) \cup \Delta_M \xrightarrow{j} \partial(M^n) \cup ((\partial M)^n \times \Delta_{[0,1)}) \xrightarrow{k} (\partial M)^n \times [0,1)^n$. Note that $j$ is a pushout of the pushout-product of the maps $\Delta_{\partial M} \to (\partial M)^n$ and $\{0\} \to \Delta_{[0,1)}$. Therefore, $j$ is an $h(G \times \Sigma_n)$-cofibration by \cite[Theorem 4.4, Lemma 4.2]{K07}. Moreover, $k$ is the product of $(\partial M)^n$ with $\partial([0,1)^n) \cup \Delta_{[0,1)} \xrightarrow{\alpha} [0,1)^n$, so we just need to show that $\alpha$ is an $h \Sigma_n$-cofibration. This follows from the fact that $\partial((\Delta^1)^n) \cup \Delta_{\Delta^1}$ is a $\Sigma_n$-equivariant subset of the simplicial set $(\Delta^1)^n$ with the permutation action of $\Sigma_n$, so upon passage to geometric realisation, and restriction to a $\Sigma_n$-numerable open subset, we obtain that $\alpha$ is an $h(\Sigma_n)$-cofibration as desired.
\end{proof}

Now consider the diagonal map $\Delta: X \to X^n$. Since $G \wr \Sigma_n$ does not act on $X$, we will not be able to show that $\Delta$ is an $h(\mathcal{C} \wr \mathcal{D}_n)$-cofibration. Instead we consider the $(G \times \Sigma_n)$-action induced by restriction along the diagonal map $G \to G^n$, in which case we have the following equivariant analogue of \cite[Corollary III.2]{DE72}:

\begin{theorem} \label{hco15}	If $X$ is $\mathcal{C}$-cofibrant, then $\Delta : X \to X^n$ is a $((\mathcal{C} \wr \mathcal{D}_n) \cap (G \times \Sigma_n))$-cofibration, see Lemma \ref{hco17}. In particular, if $X$ is $qG$-cofibrant, then $\Delta$ is a $q(G \times \Sigma_n)$-cofibration.
\end{theorem}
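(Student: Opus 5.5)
Write $\mathcal{C}' = (\mathcal{C} \wr \mathcal{D}_n) \cap (G \times \Sigma_n)$. The plan is to apply Lemma \ref{hco2}, or rather Theorem \ref{hco6}, to the diagonal $\Delta: X \to X^n$ viewed as a map of $(G \times \Sigma_n)$-spaces. This requires two things. First, $X^n$ must be $\mathcal{C}'$-cofibrant as a $(G\times\Sigma_n)$-space. Second, $\Delta$ must be an $h(G \times \Sigma_n)$-cofibration. Theorem \ref{hco6} then also gives $\mathcal{C}'$-cofibrancy of the domain for free.

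The first point is immediate. By Lemma \ref{hco59}, $X^n$ is $(\mathcal{C}\wr\mathcal{D}_n)$-cofibrant as a $(G\wr\Sigma_n)$-space. Restricting along the embedding $G\times\Sigma_n \to G\wr\Sigma_n$ (diagonal on $G$) and applying Lemma \ref{hco17} gives that $X^n$ is $\mathcal{C}'$-cofibrant.

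The main work is showing $\Delta$ is an $h(G\times\Sigma_n)$-cofibration. I would first reduce to $X$ a $\mathcal{C}$-cell complex. If $X$ is a retract of $Y$, then $\Delta_X$ is a retract of $\Delta_Y$, and $h$-cofibrations are closed under retracts; alternatively, $\mathcal{C}'$-cofibrations are closed under retracts, so one can retract at the end. Then I would filter $X^n$ relative to the diagonal. Let $X_\lambda$ be the cell filtration and set $Z_\lambda = \Delta_X \cup X_\lambda^n$. Each step $Z_\lambda \to Z_{\lambda+1}$ is obtained by attaching the new products of cells, glued along the boundary of $M^n$ union the diagonal of $M$, where $M = G/H\times D^m$ is the new cell (together with mixed products). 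The key input is Lemma \ref{hco58}: for the smooth $G$-manifold with boundary $M = G/H \times D^m$, the inclusion $\partial(M^n)\cup\Delta_M \to M^n$ is an $h(G\times\Sigma_n)$-cofibration. Its domain and codomain are $(G\times\Sigma_n)$-manifolds or unions thereof, hence $\mathcal{C}'$-cofibrant by Illman's theorem and Theorem \ref{hco6}. So this inclusion is a $\mathcal{C}'$-cofibration directly, bypassing the $h$-cofibration statement for $\Delta$ itself.

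Accordingly, I would organise the argument to show directly that $\Delta_X\to Z_\lambda$ is a $\mathcal{C}'$-cofibration by transfinite induction. Limit steps are unions, which preserve cofibrations. At a successor step, $Z_\lambda \to Z_{\lambda+1}$ is a pushout of $\Sigma_n$-saturated families of maps of the following kinds:
\begin{itemize}
\item products of the $\Sigma_n$-orbit of pseudocells not meeting the diagonal, treated as in Lemma \ref{hco59};
\item the map $\partial(M^n)\cup\Delta_M \to M^n$ for the new cell $M$.
\end{itemize}
Both kinds are $h(G\times\Sigma_n)$-cofibrations between $\mathcal{C}'$-cofibrant spaces, so Lemma \ref{hco2} or Theorem \ref{hco6} applies. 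The main obstacle is this bookkeeping: checking that the mixed pseudocells, which involve both the new cell and old cells, attach correctly and $\Sigma_n$-equivariantly, and that the relevant isotropy groups lie in $\mathcal{C}'$. I expect to handle the mixed cells by grouping them into $\Sigma_n$-orbits, mirroring the Zorn's lemma argument in Lemma \ref{hco59}. Finally, for $\mathcal{C}$ the family of all subgroups, $\mathcal{C}'$ is all subgroups of $G\times\Sigma_n$, which yields the $q(G\times\Sigma_n)$ statement.
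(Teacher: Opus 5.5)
Your strategy works, but it is organised differently from the paper's proof.

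\textbf{How the two routes compare.} The paper inducts over the cells of the source. Assuming $\Delta_Y$ is an $h(G\times\Sigma_n)$-cofibration and $X=Y\cup e$, it writes $\Delta_X$ as a map of pushouts. It then factors $\Delta_X$ as a pushout of $Y\to W:=Y\times X^{n-1}\cup\dots\cup X^{n-1}\times Y$, followed by a pushout of the map $\partial(M^n)\cup\Delta_M\to M^n$ of Lemma \ref{hco58}. The first map is an $h$-cofibration by the inductive hypothesis, Lemma \ref{hco56} and \cite[Lemma 5]{S72}. Since $\Delta_X=\operatorname{colim}_\lambda\Delta_{X_\lambda}$ is a colimit in the arrow category rather than a transfinite composite, the paper must then choose compatible NDR pairs (Lemmas \ref{hco50}, \ref{hco51}, \ref{hco52}) to show $\Delta_X$ is an $h(G\times\Sigma_n)$-cofibration. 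Only at the end does it apply Lemma \ref{hco2}, once, using that $X^n$ is $\mathcal{C}'$-cofibrant.

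Your filtration $Z_\lambda=\Delta_X\cup X_\lambda^n$ works relative to the whole diagonal. It turns $\Delta_X\to X^n$ into an honest transfinite composite. Applying Theorem \ref{hco6} (with Illman) to each local attaching map then lets closure of $\mathcal{C}'$-cofibrations under pushouts and transfinite composites replace the NDR-pair bookkeeping entirely. The $h$-cofibration property of $\Delta$, which Lemma \ref{hco14} later uses, follows because $\mathcal{C}'$-cofibrations are $h$-cofibrations. What the paper's route buys is that $W$ absorbs all mixed products at once, so no pseudocell decomposition is needed. Both routes rest on Lemma \ref{hco58} as the one geometric input.

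\textbf{First correction: the successor step.} As stated, $Z_\lambda\to Z_{\lambda+1}$ is not a single pushout of your two families. Take a point of $\partial(M^n)$ with some coordinates in $\partial M$ and others in the interior. It maps to a point with some coordinates in the new open cell and some in $X_\lambda$, which lies neither in $X_\lambda^n$ nor in $\Delta_X$. So the mixed pieces must already be present before $M^n$ is attached.

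You can avoid pseudocell ordering altogether by borrowing the paper's $W$. Set $W_{\lambda+1}=\bigcup_i X_{\lambda+1}^{i-1}\times X_\lambda\times X_{\lambda+1}^{n-i}$ and factor $Z_\lambda\to Z_{\lambda+1}$ through $\Delta_X\cup W_{\lambda+1}$.
\begin{itemize}
\item Since $W_{\lambda+1}\cap\Delta_X=\Delta_{X_\lambda}$, the first map is a pushout of $X_\lambda^n\to W_{\lambda+1}$.
\item That map is an $h(G\times\Sigma_n)$-cofibration, exactly as the paper argues for $Y^n\to W$.
\item Both ends are $\mathcal{C}'$-cofibrant. $X_{\lambda+1}^n$ is $\mathcal{C}'$-cofibrant by Lemmas \ref{hco59} and \ref{hco17}. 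Since $W_{\lambda+1}\to X_{\lambda+1}^n$ is a pushout of the $h$-cofibration $\partial(M^n)\to M^n$, Theorem \ref{hco6} makes $W_{\lambda+1}$ $\mathcal{C}'$-cofibrant.
\item So the first map is a $\mathcal{C}'$-cofibration by Lemma \ref{hco2}.
\item The second map is exactly a pushout of $\partial(M^n)\cup\Delta_M\to M^n$.
\end{itemize}

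\textbf{Second correction: the final sentence.} It is false that $\mathcal{C}'$ is the family of all closed subgroups of $G\times\Sigma_n$ when $\mathcal{C}$ is. For $G$ trivial and $n=3$, $\mathcal{C}'$ consists of the Young subgroups and misses $A_3$. The $q(G\times\Sigma_n)$ statement follows for a different reason. The generating cofibrations of the $\mathcal{C}'$-model structure are among those of the $q(G\times\Sigma_n)$-model structure. Hence every $\mathcal{C}'$-cofibration is a $q(G\times\Sigma_n)$-cofibration.
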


\begin{proof}
We know that $X^n$ is $((\mathcal{C} \wr \mathcal{D}_n) \cap (G \times \Sigma_n))$-cofibrant by Lemma \ref{hco59} and Lemma \ref{hco17}. Since any $\mathcal{C}$-cofibrant $G$-space is a retract of a $\mathcal{C}$-cell complex, by Lemma \ref{hco2} it suffices to show that if $Z$ is a $\mathcal{C}$-cell complex then $\Delta : Z \to Z^n$ is an $h(G \times \Sigma_n)$-cofibration. To this end, assume that $\Delta: Y \to Y^n$ is an $h(G \times \Sigma_n)$-cofibration and $X$ is obtained from $Y$ by attaching a single cell along $\alpha: G/H \times S^{k-1} \to Y$. We will show that if $(H, \lambda)$ represents $(Y^n, Y)$ a $(G \times \Sigma_n)$-NDR-pair, then there is a $(G \times \Sigma_n)$-NDR pair $(K, \mu)$ for $(X^n, X)$ such that $K|_{Y^n \times I} = H$ and $\mu|_{Y^n} = \lambda$. Firstly, the composite $Y \to Y^n \to Y \times X^{n-1} \cup ... \cup X^{n-1} \times Y$ is an $h(G \times \Sigma_n)$-cofibration, which can be represented by a $(G \times \Sigma_n)$-NDR pair that extends $(H, \lambda)$ by Lemma \ref{hco51}, where note $Y^n \to Y \times X^{n-1} \cup ... \cup X^{n-1} \times Y$ is an $h(G \times \Sigma_n)$-cofibration by Lemma \ref{hco56} and the equivariant analogue of \cite[Lemma 5]{S72}. We then consider the map between pushouts:
	
\[	\begin{tikzcd}
		G/H \times D^k  \arrow{d}{\Delta} & G/H \times S^{k-1} \arrow{l} \arrow{d}{\Delta} \arrow{r} & Y \arrow[tail]{d}{\alpha} \\
		(G/H)^n \times D^{kn} & (G/H)^n \times \partial D^{kn} \arrow{l} \arrow{r} & Y \times X^{n-1} \cup ... \cup  X^{n-1} \times Y
	\end{tikzcd} \]

We've seen that $\alpha$ is an $h(G \times \Sigma_n)$-cofibration. The map between pushouts is $\Delta : X \to X^n$ and to show this is an $h(G \times \Sigma_n)$-cofibration it suffices to show the pushout-product map, $\phi$, below is an $h(G \times \Sigma_n)$-cofibration:
\[
\phi: G/H \times D^k \cup_{G/H \times S^{k-1}} (G/H)^n \times \partial D^{kn} \to (G/H)^n \times D^{kn} 
\]

This follows from Lemma \ref{hco58}, and so $\Delta$ is an $h(G \times \Sigma_n)$-cofibration, being the composite of a pushout of $\alpha$ and a pushout of $\phi$. Moreover, the $h(G \times \Sigma_n)$-cofibration $X \to X \times X$ can be represented by a $(G \times \Sigma_n)$-NDR-pair that extends $(H, \lambda)$ by Lemmas \ref{hco51} and \ref{hco52}. The lemma now follows by passage to colimits.
\end{proof}

\begin{corollary}
	If $\mathcal{C}$ is closed under conjugacy and finite intersections and $X$ is $\mathcal{C}$-cofibrant, then $\Delta: X \to X^n$ is a $\mathcal{C}$-cofibration.
\end{corollary}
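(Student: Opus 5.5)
The plan is to obtain this corollary from Theorem \ref{hco15} by restricting the $(G \times \Sigma_n)$-action to $G$, embedded as $G \times \{e\}$. We then have to identify which family of subgroups of $G$ appears after this restriction, and check that it sits inside $\mathcal{C}$.

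\textbf{Step 1.} By Theorem \ref{hco15}, $\Delta: X \to X^n$ is a $\mathcal{C}'$-cofibration of $(G \times \Sigma_n)$-spaces, where $\mathcal{C}' = (\mathcal{C} \wr \mathcal{D}_n) \cap (G \times \Sigma_n)$.

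\textbf{Step 2.} We need a relative form of Lemma \ref{hco17}. It should hold with the same argument: if $i$ is a $\mathcal{C}'$-cofibration of $K$-spaces and $L \le K$, then $i$ is a $(\mathcal{C}' \cap L)$-cofibration of $L$-spaces. By the retract argument we reduce to relative $\mathcal{C}'$-cell complexes. Each generating cell $K/J \times S^{m-1} \to K/J \times D^m$ is then an $hL$-cofibration between $L$-manifolds, which are $L$-CW complexes with isotropy groups in $\mathcal{C}' \cap L$ by \cite[Corollary 7.2]{I83}. Theorem \ref{hco6} then gives that each cell is a $(\mathcal{C}' \cap L)$-cofibration. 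Pushouts and transfinite composites preserve this.

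Applying Step 2 with $K = G \times \Sigma_n$ and $L = G$ shows that $\Delta$ is a $(\mathcal{C}' \cap G)$-cofibration of $G$-spaces, with $G$ acting diagonally on $X^n$.

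\textbf{Step 3 (main obstacle).} We must show $\mathcal{C}' \cap G \subset \mathcal{C}$. A conjugate of a subgroup $(H_1 \times \dots \times H_n) \rtimes Q$ intersected with the diagonal copy of $G$ has the form $g_1^{-1}H_1g_1 \cap \dots \cap g_n^{-1}H_ng_n$. This is because the conjugating element lies in $G^n \rtimes \Sigma_n$, and a diagonal element $(g,\dots,g)$ lies in $(H_1 \times \dots \times H_n)\rtimes Q$ exactly when $g \in \bigcap H_i$. The permutation part only reindexes the $H_i$ and may be absorbed.

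We then intersect further with $G \times \Sigma_n$ and with $G \times \{e\}$. Each step only keeps elements of the above form, so every resulting subgroup is a finite intersection of conjugates of members of $\mathcal{C}$. This lies in $\mathcal{C}$ because $\mathcal{C}$ is closed under conjugacy and finite intersections. The care needed is in tracking the conjugation by elements of $G^n \rtimes \Sigma_n$, which is not in $G \times \Sigma_n$, so I would write this computation out explicitly.

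\textbf{Step 4.} Since the generating cells for $\mathcal{C}' \cap G$ are among those for $\mathcal{C}$, every $(\mathcal{C}' \cap G)$-cofibration is a $\mathcal{C}$-cofibration, which proves the corollary. If $\mathcal{C}' \cap G$ is not literally contained in $\mathcal{C}$ because of indexing subtleties, the fallback is to argue directly. The domain and codomain of $\Delta$ are $\mathcal{C}$-cofibrant $G$-spaces by Lemma \ref{hco59}, Lemma \ref{hco17} and the preceding corollary. Moreover $\Delta$ is an $hG$-cofibration, by restricting the $h(G\times\Sigma_n)$-cofibration from the proof of Theorem \ref{hco15}. Lemma \ref{hco2}, or Theorem \ref{hco6}, then applies.
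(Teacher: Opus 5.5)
Your proposal is correct and is essentially the argument the paper leaves implicit. You restrict the conclusion of Theorem \ref{hco15} along the diagonal $G \hookrightarrow G \times \Sigma_n$ via the relative form of Lemma \ref{hco17}, and then use closure of $\mathcal{C}$ under conjugacy and finite intersections to see that every resulting isotropy group $g_1 H_1 g_1^{-1} \cap \dots \cap g_n H_n g_n^{-1}$ lies in $\mathcal{C}$. Your fallback is an equally valid shortcut that avoids the relative version of Lemma \ref{hco17}: it applies Theorem \ref{hco6} to the $hG$-cofibration $\Delta$, whose codomain $X^n$ is $\mathcal{C}$-cofibrant by the preceding corollary.
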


\section{A $q$-fibration between $q$-cofibrant spaces is an $h$-fibration} \label{hco84}

As in \cite{DE72}, we make the following definition:

\begin{definition} \label{hco12}
	We call a $G$-space, $X$, $G$-equivariantly locally equiconnected (or $G$-LEC) if $\Delta: X \to X \times X$ is an $hG$-cofibration.
\end{definition}

The next lemma is the equivariant analogue of \cite[Theorem II.3]{DE72}:

\begin{lemma} \label{hco13}
	Let $X$ be $G$-LEC and suppose that $x \in X$ has isotropy group $H$. Then there exists an $H$-equivariant open neighbourhood $U$ of $x$ such that $\pi_1, \pi_2:  U \times U \to X$ are $H$-homotopic. Here, $\pi_i$ denotes the projection onto the $i$th factor followed by the inclusion into $X$.
\end{lemma}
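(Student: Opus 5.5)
The plan is to follow the non-equivariant argument of \cite[Theorem II.3]{DE72}, restricting the group action to $H$ at the start. Since $\Delta: X \to X \times X$ is an $hG$-cofibration, it is also an $hH$-cofibration: restriction along $H \subset G$ preserves the defining retraction $X \times X \times I \to X \times X \times \{0\} \cup \Delta(X) \times I$. So we may choose an $H$-equivariant NDR-pair $(K, \lambda)$ representing $(X \times X, \Delta(X))$. Here $\lambda: X \times X \to I$ is $H$-invariant with $\lambda^{-1}(0) = \Delta(X)$. The map $K: X \times X \times I \to X \times X$ is an $H$-homotopy rel $\Delta(X)$ from the identity, with $K(y,1) \in \Delta(X)$ whenever $\lambda(y) < 1$.

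Next, set $W = \lambda^{-1}([0,1))$. This is an $H$-equivariant open neighbourhood of the diagonal, and $r := K(-,1)|_W : W \to \Delta(X) \cong X$ is an $H$-equivariant retraction. For $y \in W$ and $t \in I$ write $K(y,t) = (K_1(y,t), K_2(y,t))$. Define $F_i : W \times I \to X$ by $F_i = K_i$. Then $F_1$ is an $H$-homotopy from $\pi_1|_W$ to $r$, and $F_2$ is an $H$-homotopy from $\pi_2|_W$ to the same map $r$, because $K(y,1)$ lies on the diagonal. Concatenating $F_1$ with the reverse of $F_2$ gives an $H$-homotopy $\pi_1|_W \simeq \pi_2|_W$, which is equivariant since each piece is.

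It remains to find an $H$-equivariant open $U \ni x$ with $U \times U \subset W$; restricting the homotopy to $U \times U$ then finishes the proof. Because $(x,x) \in W$ and $W$ is open, there is some open $V \ni x$ with $V \times V \subset W$. We need care with the product topology in CGWH, but since the product is the $k$-ification of the usual product, openness in the product topology implies openness in the $k$-topology, and conversely a product of open sets is open. I would argue with $W$ open in the $k$-product and, if necessary, use the tube lemma on compact neighbourhoods, or simply work with sequential neighbourhoods. To make the neighbourhood $H$-invariant, set $U = \bigcap_{h \in H} hV$. This is an $H$-invariant set containing $x$, since $hx = x$. The issue is showing that it is open.

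I expect this openness to be the main obstacle, and would handle it by compactness of $H$. The set $\{(h,u) : h^{-1}u \in V\}$ is open in $H \times X$ and contains $H \times \{x\}$. Since $H$ is compact, the tube lemma gives an open $O \ni x$ with $H \times O$ inside it, so that $O \subset hV$ for all $h$. Then $U := HO$ is open, since it is a union of translates of $O$, and $H$-invariant. It is contained in $V$, because $hO \subset h h^{-1} V$ by applying the previous inclusion with $h^{-1}$. Hence $U \times U \subset V \times V \subset W$ and $x \in U$, as required. If the tube lemma in the CGWH product needs care, I would note that $H$ is compact Hausdorff, so $H \times X$ carries the ordinary product topology and the classical tube lemma applies.
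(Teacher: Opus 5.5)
Your proof has a genuine gap at the sentence ``Because $(x,x)\in W$ and $W$ is open, there is some open $V\ni x$ with $V\times V\subset W$.'' Under the paper's CGWH conventions, $W=\lambda^{-1}([0,1))$ is open in the $k$-ified product, which can be strictly finer than the product topology. The implication you need, that a $k$-open neighbourhood of $(x,x)$ contains a box $V\times V$, is exactly the one that fails. The two facts you quote are true but point the other way, and neither compact nor sequential neighbourhoods are available in general.

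A Dowker-type counterexample shows this. Let $G$ be trivial and $X=A\vee B$, where $A$ is a wedge of intervals $I_n$ ($n\geq 1$), $B$ is a wedge of intervals $I_f$ ($f\in\mathbb{N}^{\mathbb{N}}$), and $x$ is the wedge point.
\begin{itemize}
\item $X$ is a simplicial complex, and its diagonal is a subcomplex of the product triangulation, so $X$ is LEC.
\item Let $D$ be the set of points $(1/f(n)\in I_n,\,1/n\in I_f)$. It meets each closed cell of $X\times X$ in at most one point, so it is closed, and it misses the diagonal.
\item If $\mu$ is an Urysohn function with $\mu|_{\Delta}=0$ and $\mu|_D=1$, then $(K,\max(\lambda,\mu))$ is again an NDR-pair for the diagonal, and its $W$ misses $D$.
\item Every box $V\times V$ with $V\ni x$ open meets $D$: choose $f$ with $1/f(n)\in V\cap I_n$ for all $n$, then choose $n$ with $1/n\in V\cap I_f$.
\end{itemize}
So for this legitimate choice of NDR-pair, no $U$ of the kind you require exists. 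Your later step, making $V$ invariant via the tube lemma over the compact group $H$, is fine; the problem lies upstream of it.

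The missing idea is to use only the slice of $W$ through $x$, which is what the paper does. Put $U=\{u\in X:\lambda(x,u)<1\}$, the preimage of $W$ under the continuous map $u\mapsto(x,u)$. It is open, and it is $H$-invariant because $hx=x$ and $\lambda$ is invariant. For $u\in U$, the paths $t\mapsto\pi_2K(x,u,t)$ and $t\mapsto\pi_1K(x,u,t)$ give $H$-homotopies from the inclusion $U\hookrightarrow X$ to some map $f$, and from the constant map $c_x$ to the same $f$, since $K(x,u,1)$ lies on the diagonal. Hence the inclusion of $U$ is $H$-homotopic to $c_x$. Precomposing with the two projections $U\times U\to U$ gives $\pi_1\simeq c_x\simeq\pi_2$.

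The cost is that this homotopy is not constant on the diagonal. That is why the paper needs Lemma~\ref{hco14}, with its cofibrancy hypothesis, to upgrade it. Your box argument, where it is valid (for instance when $X\times X$ carries the ordinary product topology, as for first countable or locally compact $X$), would give that rel-diagonal homotopy directly.
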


\begin{proof}
	Let $(H, \lambda)$ represent the inclusion of the diagonal $(X \times X, \Delta)$ as a $G$-NDR pair. Identify $X$ with $\{x\} \times X$ and let $U = \lambda^{-1}([0,1)) \cap (\{x\} \times X)$. Consider $H: \{x\} \times U \times I \to X \times X$. Then $\pi_2 H$ defines an $H$-homotopy from the inclusion $i: U \to X$ to some map $f$. Moreover, $\pi_1 H$ defines an $H$-homotopy from $f$ to the constant map to $x$. If we now consider $\pi_1, \pi_2 :  U \times U \to X$, we see that $\pi_1$ is the composite $U \times U \to X \times U \xrightarrow{\pi_1} X$, which is $H$-homotopic to the constant map to $x$. Similarly, $\pi_2$ is $H$-homotopic to the constant map to $x$, so $\pi_1$ and $\pi_2$ are $H$-homotopic.
\end{proof}

\begin{lemma} \label{hco14}
	Let $X$ be $\mathcal{C}$-cofibrant and suppose that $x \in X$ has isotropy group $H$. Then there exists an $H$-equivariant open neighbourhood $U$ of $x$ such that $\pi_1, \pi_2: U \times U \to X$ are $H$-homotopic via an $H$-homotopy which is constant on $\Delta$.
\end{lemma}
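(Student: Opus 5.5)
Our plan is to upgrade the argument of Lemma \ref{hco13} by using that, for $\mathcal{C}$-cofibrant $X$, the diagonal $\Delta: X \to X \times X$ is a \emph{cofibration} in the model-categorical sense. Then the relevant homotopies can be built by lifting against acyclic fibrations rather than by an ad hoc NDR deformation. First we restrict to $H$. By Lemma \ref{hco17}, $X$ is $(\mathcal{C} \cap H)$-cofibrant as an $H$-space. By Theorem \ref{hco15} (with $n=2$, restricting the $G \times \Sigma_2$ action to $H$ via Lemma \ref{hco17}), $\Delta: X \to X \times X$ is then a $\mathcal{D}$-cofibration of $H$-spaces for a suitable family $\mathcal{D}$ of subgroups of $H$. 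In particular $\Delta$ is a cofibration of $H$-spaces between cofibrant $H$-spaces in a topological model structure. We also know that $\Delta$ is an $hH$-cofibration: cofibrations in a topological model structure on spaces are retracts of relative cell complexes, hence $h$-cofibrations. So we may fix an $H$-NDR pair $(K,\lambda)$ for $(X \times X, \Delta)$.

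Next we choose $U$. We set $W = \lambda^{-1}([0,1))$, an $H$-invariant open neighbourhood of $\Delta$ in $X \times X$. Then we let $U$ be an $H$-invariant open neighbourhood of $x$ with $U \times U \subset W$; one exists by a tube-lemma argument, intersecting over the compact group $H$ if necessary. By Lemma \ref{hco10}, $\Delta: U \to U \times U$ (the restriction of $\Delta$ to the open set $U\times U$) is again a $\mathcal{D}$-cofibration of $H$-spaces.

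The homotopy is then produced by a lifting problem. We need an $H$-map $U \times U \times I \to X$ restricting to $\pi_1$ and $\pi_2$ at the ends and to $u \mapsto x'$-independent constant paths $(u,u,t) \mapsto u$ on the diagonal. Equivalently, we need a lift of $U \times U \to X \times X$, $(a,b)\mapsto(a,b)$, through the path fibration $ev_{0,1}: X^I \to X \times X$, extending the constant-path map on $\Delta_U$. The map $ev_{0,1}$ is an $H$-fibration in the $\mathcal{D}$-structure, since the structure is topological and $\{0,1\} \to I$ is a cofibration, but it is not acyclic, so the lift does not come for free. Here we would use the NDR data, as in the proof of Lemma \ref{hco13}. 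The deformation $K$ restricted to $U \times U$ gives, at time $1$, a homotopy from the inclusion $U \times U \to X \times X$ into $X \times X$ relative to nothing, ending in $\Delta$ whenever $\lambda<1$. Composing its two coordinates produces an explicit path in $X$ from $a$ to $b$: run $\pi_1 K$ forward from $a$ to the common endpoint, then $\pi_2 K$ backward to $b$. On $\Delta$ the NDR homotopy is stationary, so this path is constant, giving the relative condition directly. If that explicit formula works (it should, since $K$ is rel $\Delta$), the cofibration property is only needed to guarantee that $K$ exists $H$-equivariantly.

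We expect the main obstacle to be exactly this point: ensuring that the NDR representation of $\Delta$ is stationary on $\Delta$ and $H$-equivariant. Only then is the concatenated path genuinely constant on the diagonal, rather than a loop that is merely null-homotopic. If the naive concatenation fails to be constant, the fallback is Lemma \ref{hco1}/\ref{hco2}-style correction. We would take the homotopy $\pi_1 \simeq \pi_2$ from Lemma \ref{hco13}, whose restriction to $\Delta_U$ is a family of loops, and use that $\Delta_U \to U\times U$ is an $hH$-cofibration to deform it rel endpoints to one constant on $\Delta_U$. This requires first contracting those loops via the contracting homotopy available near $x$, after possibly shrinking $U$.
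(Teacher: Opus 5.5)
\textbf{The gap: choosing $U$ with $U\times U\subset W$.}
Your explicit formula is correct as far as it goes. For $(a,b)\in W=\lambda^{-1}([0,1))$, take the path $\pi_1K((a,b),-)$ followed by the reverse of $\pi_2K((a,b),-)$. It runs from $a$ to $b$, depends continuously and $H$-equivariantly on $(a,b)$, and is constant on $\Delta$. The ``main obstacle'' you single out is not one: a $G$-NDR deformation is by definition equivariant and stationary on $\Delta$. So you have proved the statement with $U\times U$ replaced by the neighbourhood $W$ of the diagonal.

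The step that fails is ``let $U$ be an open neighbourhood of $x$ with $U\times U\subset W$; one exists by a tube-lemma argument''. In this paper all spaces are CGWH, so $X\times X$ carries the $k$-ification of the product topology. An open neighbourhood of $(x,x)$ need not contain any $U\times U$ once $X$ is not locally compact. Take the classical Dowker example: $X$ is the wedge at $x$ of countably many arcs and of continuum many arcs, with the CW topology. Then $X\times X$ has open neighbourhoods of $\Delta$ containing no $U\times U$ with $U\ni x$ open. You may replace $\lambda$ by $\max(\lambda,\mu)$ for any Urysohn function $\mu$ vanishing on $\Delta$ and equal to $1$ off such a neighbourhood. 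Hence a fixed but arbitrary NDR pair gives you no control over $W$. Your argument is complete only when the tube lemma is available, e.g.\ $X$ locally compact, or $X\times X$ carrying the product topology.

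\textbf{How the paper avoids this.}
Lemma \ref{hco13} uses only the slice $U=\lambda^{-1}([0,1))\cap(\{x\}\times X)$. This set is open in $X$ and $H$-contractible in $X$ to $x$. It yields an $H$-homotopy $L$ from $\pi_1$ to $\pi_2$ on $U\times U$, but with nonconstant loops $\Gamma$ on $\Delta_U$. The paper then concatenates $L$ with $\Gamma^{-1}\pi_2$, so the diagonal restriction becomes $\Gamma*\Gamma^{-1}$, which is null-homotopic rel endpoints via a canonical $\alpha$. It straightens the homotopy by extending over $U\times U\times I\times I$, using that
\[
U\times U\times I\times\{0\}\cup\Delta_U\times I\times I\cup U\times U\times\{0,1\}\times I\to U\times U\times I\times I
\]
is an acyclic $hH$-cofibration. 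This needs $U$ itself to be $H$-LEC, which is where cofibrancy of $X$ is genuinely used: Corollary \ref{hco11}, Lemma \ref{hco17} and Theorem \ref{hco15}. Your appeal to Lemma \ref{hco10} on the open set $U\times U$ would serve equally well here.

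Your fallback paragraph points in this direction but is vague exactly at the decisive step. ``Contracting those loops via the contracting homotopy near $x$'' does not by itself produce null-homotopies rel endpoints that depend continuously and equivariantly on $u$. One of two things makes it work. Either use the $\Gamma*\Gamma^{-1}$ trick. Or observe that the diagonal loops of the homotopy built in the proof of Lemma \ref{hco13} are already of the form $c_u*\overline{c_u}$, where $c_u$ is the contraction path from $u$ to $x$.
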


\begin{proof}
	By Lemma \ref{hco15} and Lemma \ref{hco13}, we know that $\pi_1$ and $\pi_2$ are $H$-homotopic, say via an $H$-homotopy $L:  U \times U \times I \to X$. Define $\Gamma: \Delta \times I \to X$ to be the restriction of $L$ to $\Delta \times I$. Let $\Gamma^{-1}$ denote the inverse homotopy from $\pi_2$ to $\pi_1$. Define $K$ to be composite (via concatenation) homotopy of $L$ and $\Gamma^{-1} \pi_2$:	
	\[  U \times U \times I \xrightarrow{\pi_2}  U \times I \cong  \Delta \times I \xrightarrow{\Gamma^{-1}} X	
\]

Then the restriction of $K$ to $\Delta \times I$ is the composite of $\Gamma$ and $\Gamma^{-1}$, and $K$ is an $H$-homotopy from $\pi_1$ to $\pi_2$. The composite of $\Gamma$ and $\Gamma^{-1}$ is itself $H$-homotopic, say via $\alpha: \Delta \times I \times I \to X$, to the constant homotopy on $\pi_1 |_\Delta$, and $\alpha$ can be made relative to $ \Delta \times \{0,1\} \times I$. We can then define a lifting problem:

	\[	\begin{tikzcd}
 U \times U \times I \times \{0\} \cup \Delta \times I \times I \cup U \times U \times \{0,1\} \times I \arrow{d} \arrow{rrrr}{K \cup \alpha \cup cnst(\{\pi_1, \pi_2\})} & & & & X \arrow{d} \\
 U \times U \times I \times I \arrow[swap]{rrrr} & & & & *		
\end{tikzcd} \]

A lift $\tilde{K}: U \times U \times I \times I \to X$ exists since $U$ is $(\mathcal{C} \cap H)$-cofibrant, by Corollary \ref{hco11} and Lemma \ref{hco17}, and, therefore, $H$-LEC by Lemma \ref{hco15}. This implies the left hand vertical map is an $hH$-acyclic $hH$-cofibration. Restricting $\tilde{K}$ to $ U \times U \times I \times \{1\}$ gives the desired $H$-homotopy.	
	\end{proof}

We will use the following consequence of \cite[Lemma 2.8]{L82}, which was noted in \cite[Lemma 6.1]{BF15}:

\begin{lemma} \label{hco18}
	If $p: E \to B$ is an $hH$-fibration, then $1 \times_H p: G \times_H E \to G \times_H B$ is an $hG$-fibration.
\end{lemma}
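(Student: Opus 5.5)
The plan is to verify the covering homotopy property directly. The idea is to use the equivalence between $G$-spaces over $G/H$ and $H$-spaces, $Z \mapsto Z_{eH}$, with inverse $P \mapsto G \times_H P$. Suppose we are given a $G$-space $A$, a $G$-map $f: A \to G \times_H E$ and a $G$-homotopy $h: A \times I \to G \times_H B$ with $h_0 = (1 \times_H p) f$. Let $\pi: G \times_H B \to G/H$ be the projection, and set $k = \pi h : A \times I \to G/H$ and $k_0 = k|_{A \times \{0\}}$.

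If $k$ were stationary, i.e.\ $k(a,t) = k_0(a)$, the argument would be immediate. Put $P = k_0^{-1}(eH)$, an $H$-space. Then $A \cong G \times_H P$ over $G/H$, and $h$ and $f$ restrict to $H$-maps $P \times I \to B$ and $P \to E$ over the fibres at $eH$. Since $p$ is an $hH$-fibration, we can lift this $H$-homotopy to $\tilde{h}: P \times I \to E$ starting at $f|_P$. Extending $G$-equivariantly gives $1 \times_H \tilde{h}: G \times_H (P \times I) \cong A \times I \to G \times_H E$, which is the required lift. One checks directly that it starts at $f$ and covers $h$.

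So the proof reduces to a straightening step, which is the main obstacle. Here one proves that $\pi: G \times_H E \to G/H$ and $\pi: G \times_H B \to G/H$ behave compatibly under homotopies of the base point in $G/H$. Concretely, the aim is to find a $G$-homeomorphism $\Phi: A \times I \to A \times I$ over $I$ with $\Phi_0 = 1$ and $k \Phi = k_0 \circ \mathrm{pr}_A$, together with compatible $G$-homeomorphisms of $(G \times_H E) \times I$ and $(G \times_H B) \times I$ commuting with $1 \times_H p$.

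The first thing to try is the $G$-equivariant covering homotopy theorem applied to the principal $H$-bundle $G \to G/H$. Since $H$ is a closed subgroup of the compact Lie group $G$, this bundle has local sections. A homotopy $k: A \times I \to G/H$ therefore lifts (using a partition of unity on the paracompact $G/H$, or by Palais' slice theorem) to a continuous $g: A \times I \to G$ with $k(a,t) = g(a,t)\, k_0(a)$ and $g(a,0)=e$. To make $g$ satisfy the equivariance $g(xa,t) = x g(a,t) x^{-1}$, I would define it on $P \times I$ and extend using $A \cong G\times_H P$, checking well-definedness.

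Left multiplication by $g(a,t)$ then untwists $h$ and $f$, reducing everything to the stationary case above. After lifting there, translating back by $g$ produces the desired lift of $h$. This step is the content of \cite[Lemma 2.8]{L82}, which I would either quote or reprove along these lines. The remaining verifications (continuity in the CGWH setting, and that the translated lift starts at $f$ and covers $h$) are routine.
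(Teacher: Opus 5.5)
Your proposal is essentially the paper's proof. The paper writes $X\times I\cong G\times_H V$ with $V=\alpha^{-1}(B)$ and quotes \cite[Lemma 2.8]{L82} for an $H$-homeomorphism $V_0\times I\cong V$ that restricts to the inclusion at $0$; this is your straightening $\Phi$, with $P=V_0$. It then solves the adjoint $H$-equivariant lifting problem over the fibre at $eH$, exactly as in your stationary case.

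Rely on the citation rather than on your sketched reproof, though. Extending $g$ from $P\times I$ via $A\cong G\times_H P$ is well defined only if $g(yp,t)=y\,g(p,t)\,y^{-1}$ for all $y\in H$. A lift built from local sections of $G\to G/H$ or a partition of unity need not satisfy this. A self-contained argument would therefore need an extra input, such as an $H$-equivariant covering homotopy property for $G\to G/H$ with $H$ acting on $G$ by conjugation.
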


\begin{proof}
	Consider a lifting problem of $G$-spaces as in the right hand square below:	
\[
\begin{tikzcd}
G \times_H V_0 \arrow{r}{\cong} \arrow{d} &	X \arrow{r}{f} \arrow{d} & G \times_H E \arrow{d} \\
G \times_H (V_0 \times I) \arrow[swap]{r}{\cong} &	X \times I \arrow[swap]{r}{\alpha} & G \times_H B 
\end{tikzcd}
\]

Then $X \times I \cong G \times_H \alpha^{-1}(B)$ and so \cite[Lemma 2.8]{L82} tells us that there is an $H$-homeomorphism $\psi: V_0 \times I \to V$, where $V = \alpha^{-1}(B)$, $V_0 = V \cap (X \times \{0\})$ and $\psi|_{V_0 \times \{0\}}$ is the inclusion of $V_0$ into $V$. Note that $G \times_H V_0 \cong X$. The $H$-homeomorphism, $\psi$, induces a $G$-homeomorphism $G \times_H (V_0 \times I) \to X \times I$, as in the diagram above, which restricts at $0 \in I$ to a homeomorphism onto $X \times \{0\}$. So to solve the lifting problem of the right hand square, it suffices to solve the adjoint lifting problem of $H$-spaces:

\[
\begin{tikzcd}
	V_0 \arrow{r}{f} \arrow{d} & G \times_H E \arrow{d} \\
		V_0 \times I \arrow[swap]{r}{\alpha \psi} & G \times_H B 
\end{tikzcd}
\]

Since $V = \alpha^{-1}(B)$, $\alpha \psi: V_0 \times I \to G \times_H B$ factors through $B$, and $f: V_0 \to G \times_H E$ factors through $E$. Since $p$ is an $hH$-fibration, it follows that the lifting problem can be solved.
\end{proof}

A version of the following theorem was first proved in an equivariant context by Gevorgyan and Jimenez in \cite[Theorem 6]{GJ19}, using the $qG$-model structure and under the assumption that both $E$ and $B$ are $G$-CW complexes. The proof below corrects a minor gap in that proof, since it is not necessarily true that a $G$-CW complex, $X$, can be covered by $G$-equivariant open subspaces which are non-equivariantly contractible in $X$.

\begin{theorem} \label{hco16}
Suppose that $\mathcal{C}$ is closed under conjugacy and intersections.	If $p: E \to B$ is a $\mathcal{C}$-fibration between $\mathcal{C}$-cofibrant spaces, then $p$ is an $hG$-fibration.
\end{theorem}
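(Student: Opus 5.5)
Following Steinberger--West and Gevorgyan--Jimenez, I will reduce the problem to a local statement over a $G$-numerable cover of $B$ by tubes $G\times_H U$. On each tube I will show that $p$ is $hG$-fibre homotopy trivial, or at least has a local $hG$-lifting property. Gluing will then be done with the equivariant Dold gluing theorem for $hG$-fibrations recorded at the end of Section \ref{hco82}.

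\textbf{Step 1: local neighbourhoods.} Fix $b\in B$ with isotropy $H$. Since $B$ is $\mathcal{C}$-cofibrant, I claim it is $G$-LEC. Indeed, Theorem \ref{hco15} (its corollary, which needs $\mathcal{C}$ closed under intersections) makes $\Delta: B\to B\times B$ a $\mathcal{C}$-cofibration, hence a $qG$-cofibration, hence an $hG$-cofibration. Lemma \ref{hco14} then gives an $H$-invariant open $U\ni b$ such that $\pi_1,\pi_2:U\times U\to B$ are $H$-homotopic rel $\Delta$. Shrinking via a slice, I take $U$ to be an $H$-slice, so that $G\times_H U\to B$ is an open embedding onto a $G$-invariant neighbourhood $V$ of $Gb$. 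The sets $V$ cover $B$. Since $B$ is $\mathcal{C}$-cofibrant, it is a retract of a cell complex and so paracompact in the CGWH sense. I will therefore pass to a $G$-numerable refinement, which is again of the form $G\times_H U'$ with $U'\subset U$ open and $H$-invariant.

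\textbf{Step 2: $p$ is an $hH$-fibration over $U$.} I use the standard Dold-type criterion: a map over a base admitting a ``local contraction with a fibrewise transport'' is a fibration. By Lemma \ref{hco17}, the restriction of $p$ to $p^{-1}(U)\to U$ is a $(\mathcal{C}\cap H)$-fibration between $(\mathcal{C}\cap H)$-cofibrant $H$-spaces; cofibrancy comes from Corollary \ref{hco11}. The $H$-homotopy $L:U\times U\times I\to B$ from $\pi_1$ to $\pi_2$ rel $\Delta$ is a homotopy of the map $E_U\times_B\ldots$ More concretely, I consider $E_U\times U\to U\times U\xrightarrow{L} B$ and lift it, starting from the projection $E_U\times U\to E$, using the $\mathcal{C}\cap H$-homotopy lifting property. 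The lift is allowed because $E_U\times U$ is $(\mathcal{C}\cap H)$-cofibrant: it is an open subspace of a product of cofibrant spaces, by Lemma \ref{hco59}, Lemma \ref{hco17} and Corollary \ref{hco11}.

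Because $L$ is constant on $\Delta$, and $E_U\times_U\Delta\to E_U\times U$ is an $hH$-cofibration (a pullback of the LEC diagonal along an $hH$-fibration-free projection), I can make the lift constant over the diagonal. This is exactly the relative lifting of $\mathcal{C}$-fibrations against the $\mathcal{C}$-cofibration of Theorem \ref{hco6}. The resulting map $\Gamma: E_U\times U\to E$ satisfies $p\Gamma(e,u)=u$ and $\Gamma(e,p(e))=e$. It is a fibrewise transport, so it gives an $H$-lifting function for the path space $U^I$ restricted to short paths. By the classical argument of Steinberger--West / Dold, $p|_U$ is then an $hH$-fibration. Alternatively, $p|_U$ is $H$-fibre homotopy equivalent to a product, using $\Gamma$ with base point $b$.

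\textbf{Step 3: conclude.} Lemma \ref{hco18} now shows that $G\times_H p^{-1}(U)\to G\times_H U$, which is $p^{-1}(V)\to V$, is an $hG$-fibration. Gluing over the $G$-numerable cover with the $hG$ gluing theorem finishes the proof.

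The main obstacle I expect is Step 2: converting the ``transport rel diagonal'' into a genuine $hH$-lifting function. This requires a careful relative lift, so I must check that the relevant inclusion into $E_U\times U\times I$ is a $(\mathcal{C}\cap H)$-cofibration. I will establish this by expressing it as a pushout product of $E_U\times_U\Delta\to E_U\times U$ with $\{0\}\to I$, and applying Lemma \ref{hco2}/Theorem \ref{hco6}.
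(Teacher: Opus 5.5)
Your overall architecture matches the paper's. Lemma \ref{hco14} gives an $H$-invariant open $U\ni b$ with $\pi_1\simeq\pi_2$ rel $\Delta$. The $(\mathcal{C}\cap H)$-fibration property of $p$ (which comes from $H\in\mathcal{C}$ and closure under intersections, not from Lemma \ref{hco17}) gives a lift over the cofibrant $H$-space $p^{-1}(U)\times U$. This makes $p$ an $hH$-fibration over $U$, a slice plus Lemma \ref{hco18} upgrades this to an $hG$-fibration over a tube, and paracompactness glues. However, Step 2 does not go through in the order you wrote it. In Step 1 you shrink $U$ to an $H$-slice, and in Step 2 you get cofibrancy of $U$ and of $p^{-1}(U)\times U$ from Corollary \ref{hco11} by calling them open subspaces. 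A slice is in general not open in $B$: if $S$ were open, its image $\{eH\}$ under the open map $G\times_H S\to G/H$ would be open, forcing $G/H$ to be discrete. So that citation fails. Reverse the order, as the paper does. Prove the $hH$-fibration property over the open $U$ from Lemma \ref{hco14}, and only then restrict to $S\cap U$; this costs nothing, since the restriction of an $hH$-fibration over $U$ to any $H$-subspace of $U$ is again an $hH$-fibration.

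The second problem is the relative lift. You need the graph inclusion $p^{-1}(U)\times_U\Delta\to p^{-1}(U)\times U$ to be an $hH$-cofibration, and ``pullback of the LEC diagonal'' does not justify this. Pullbacks of $h$-cofibrations along arbitrary maps need not be $h$-cofibrations: every zero set of a map to $I$ is a pullback of $\{0\}\to I$, and zero sets are typically not $h$-cofibrations. Moreover, $p\times 1$ is not known to be a fibration, since that is what you are proving. The claim is nonetheless true for a different reason. $U$ is $H$-LEC by Theorem \ref{hco15}, so an $H$-NDR pair $(\phi,\lambda)$ for $(U\times U,\Delta_U)$ yields an equiconnecting homotopy $D(v,u,t)$ from $u$ to $v$, defined where $\lambda<1$ and constant on $\Delta_U$. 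Moving only the second coordinate, $(e,u,t)\mapsto(e,D(p(e),u,t))$, cut off using $\lambda(p(e),u)$, exhibits the graph as an $H$-NDR pair. Theorem \ref{hco6} and the topological model structure then give your relative lift.

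The paper avoids the relative problem altogether. It normalises the homotopy via Lemma \ref{hco50} so that $K(u,v,t)=K(u,v,1)$ for $t\ge\lambda(u,v)$, lifts with no rel condition to a slicing function $\tilde K$, and sets $\tilde L(x,t)=\tilde K(L(x,t),f(x),\lambda(L(x,t),pf(x)))$. This starts at $f(x)$ because $\lambda$ vanishes on the diagonal, and it covers $L$ because $K$ has reached its endpoint by time $\lambda$. Finally, once you have $\Gamma$ with $p\Gamma(e,u)=u$ and $\Gamma(e,p(e))=e$, no ``short paths'' or Dold-type argument is needed: $\Gamma(f(x),L(x,t))$ is already the required lift. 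Your alternative via fibre homotopy triviality would not, on its own, make $p|_U$ an $hH$-fibration.
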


\begin{proof}
	Let $(H, \lambda)$ represent $(B \times B, \Delta_B)$ as a $G$-NDR pair. As in Lemma \ref{hco14}, if $b \in B$ has isotropy group $H$ there is an $H$-equivariant open neighbourhood $U$ of $b$ and an $H$-homotopy $K: U \times U \times I \to B$ between $\pi_1$ and $\pi_2$ such that $K(u,v,t) = K(u,v,1)$ for all $t \geq \lambda(u,v)$. Note that $H \in \mathcal{C}$ since there exists an inclusion of $B$ into a $\mathcal{C}$-cell complex. Since $p: E \to B$ is a $(\mathcal{C} \cap H)$-fibration, $\mathcal{C}$ is closed under intersections and both $U$ and $p^{-1}(U)$ are $(\mathcal{C} \cap H)$-cofibrant, we can solve the lifting problem:
	
	\[
	\begin{tikzcd}
		U \times p^{-1}(U) \arrow{rr}{\pi_2} \arrow{d} & & E \arrow{d}{p} \\
		U \times p^{-1}(U) \times I  \arrow[swap]{rr}{K \circ (1 \times p \times 1)} & & B 
	\end{tikzcd}
	\]
	
	to define a slicing function $\tilde{K}: U \times p^{-1}(U) \times I \to E$. We now show that $p: p^{-1}(U) \to U$ is an $hH$-fibration. Indeed, given a lifting problem of $H$-spaces:
	
	\[
	\begin{tikzcd}
		X \arrow{r}{f} \arrow{d} & p^{-1}(U) \arrow{d}{p} \\
		X \times I \arrow[swap]{r}{L} & U 
	\end{tikzcd}
	\]
	
	a lift can be defined via $\tilde{L}(x,t) = \tilde{K}(L(x,t) ,f(x), \lambda(L(x,t), pf(x)))$.
	
	Since $B$ is completely regular, $b$ also has a $G$-equivariant	open neighbourhood $G$-homeomorphic to $G \times_H S$, where $S$ is an $H$-equivariant subspace of $B$ containing $b$, by \cite[Theorem 5.4]{B72}. Replacing $S$ by $S \cap U$ if necessary, we may as well assume $S \subset U$. In this case, it follows from the above that $p: p^{-1}(S) \to S$ is an $hH$-fibration and so $p: G \times_H p^{-1}(S) \to G \times_H S$ is an $hG$-fibration by Lemma \ref{hco18}. It follows that for every $b \in B$ there is a $G$-equivariant neighbourhood, $V$, of $b$ for which $p: p^{-1}(V) \to V$ is an $hG$-fibration. Since $B$ is paracompact, it follows that $p$ is an $hG$-fibration.
\end{proof}

\section{An alternative proof of Waner's Theorem} \label{hco85}

So far we have considered the $\mathcal{C}$-model structure and the $hG$-model structure on the category of $G$-spaces - however, we now have reason to consider the $m \mathcal{C}$-model structure, obtained by mixing the $\mathcal{C}$ and $hG$ model structures as in \cite[Theorem 2.1]{C06}. By \cite[Corollary 3.7]{C06}, a $G$-space is $m \mathcal{C}$-cofibrant iff it has the $G$-homotopy type of a $\mathcal{C}$-cofibrant $G$-space. Note that if $X$ is $\mathcal{C}$-cofibrant then just using subgroups $H \in \mathcal{C}$ in the proof of the $G$-CW approximation theorem, \cite[Ch. I, Theorem 3.6]{M96}, we obtain a $\mathcal{C}$-CW approximation $\gamma: Z \to X$ where $\gamma$ is a $G$-homotopy equivalence since a $\mathcal{C}$-equivalence between $\mathcal{C}$-cofibrant $G$-spaces is a $G$-homotopy equivalence. Therefore, a $G$-space is $m \mathcal{C}$-cofibrant iff it has the $G$-homotopy type of a $\mathcal{C}$-CW complex. Note that we are referring to unbased homotopy types throughout.

Using the results of the previous sections, we can give an alternative proof of Waner's theorem on $G$-spaces of the $G$-homotopy type of a $G$-CW complex, \cite[Corollary 4.14]{W80}:

\begin{theorem} \label{hco54}
Let $\mathcal{C}$ be closed under conjugacy and intersections. Suppose that $p: E \to B$ is an $hG$-fibration and $* \in B$ is a $G$-fixed basepoint. If $E$ and $B$ are $m \mathcal{C}$-cofibrant, then $F := p^{-1}(*)$ is $m \mathcal{C}$-cofibrant.
\end{theorem}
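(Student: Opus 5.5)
We plan to reduce to the case where $E$ and $B$ are themselves $\mathcal{C}$-cofibrant. Then $p$ can be replaced by a $\mathcal{C}$-fibration, and the fibre is realised as a pullback that can be analysed using Theorem \ref{hco6}.

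\textbf{Step 1: make $B$ cofibrant.} Since $B$ is $m\mathcal{C}$-cofibrant, we choose a $\mathcal{C}$-CW complex $B'$ and a $G$-homotopy equivalence $g: B' \to B$. We may arrange that $B'$ has a $G$-fixed $0$-cell $*'$ with $g(*') = *$, adjusting $g$ by a homotopy if necessary. This uses that $G/G$-cells are allowed, which we would need to check from the hypotheses on $\mathcal{C}$; alternatively, we take $*$ to have isotropy in $\mathcal{C}$. We pull $p$ back along $g$ to get $p': E' \to B'$. Since $p$ is an $hG$-fibration and $g$ is a $G$-homotopy equivalence, the induced map $E' \to E$ is a $G$-homotopy equivalence, by the equivariant version of Dold's theorem on pullbacks of Hurewicz fibrations along homotopy equivalences. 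Hence $E'$ is $m\mathcal{C}$-cofibrant, and the fibre of $p'$ over $*'$ equals $F$. So we may assume $B$ is a $\mathcal{C}$-CW complex with $*$ a vertex.

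\textbf{Step 2: make $E$ cofibrant.} We factor $p$ in the $\mathcal{C}$-model structure, starting from a $\mathcal{C}$-cofibrant replacement $E'' \to E$ that is a $G$-homotopy equivalence (possible because $E$ is $m\mathcal{C}$-cofibrant). We then factor the composite $E'' \to B$ as a $\mathcal{C}$-acyclic $\mathcal{C}$-cofibration $E'' \to \tilde E$ followed by a $\mathcal{C}$-fibration $\tilde p: \tilde E \to B$. Now $\tilde E$ is $\mathcal{C}$-cofibrant, so $\tilde p$ is a $\mathcal{C}$-fibration between $\mathcal{C}$-cofibrant spaces and hence an $hG$-fibration by Theorem \ref{hco16}. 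Moreover, $E'' \to \tilde E$ is a $\mathcal{C}$-equivalence between $\mathcal{C}$-cofibrant spaces, hence a $G$-homotopy equivalence. Thus $E \simeq \tilde E$ over $B$ up to homotopy, and $p$ and $\tilde p$ are $hG$-fibrations over $B$. Dold's fibrewise homotopy equivalence theorem (equivariant form) then shows that $E$ and $\tilde E$ are fibre homotopy equivalent over $B$. In particular, $F$ is $G$-homotopy equivalent to $\tilde F := \tilde p^{-1}(*)$.

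\textbf{Step 3: show $\tilde F$ is $\mathcal{C}$-cofibrant.} Since $*$ is a vertex of the $\mathcal{C}$-CW complex $B$, the inclusion $* \to B$ is an $hG$-cofibration. Hurewicz fibrations pull cofibrations back to cofibrations (the equivariant version of Str\o m's result), so $\tilde F \to \tilde E$ is an $hG$-cofibration. As $\tilde E$ is $\mathcal{C}$-cofibrant, Theorem \ref{hco6} shows $\tilde F$ is $\mathcal{C}$-cofibrant. Therefore $F$ is $m\mathcal{C}$-cofibrant.

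\textbf{Main obstacle.} The hardest part is justifying the equivariant homotopy-theoretic inputs: Dold's fibre homotopy equivalence theorem, pullback along $G$-homotopy equivalences, and Str\o m's pullback-of-cofibration lemma, all in the $G$-setting with a $G$-fixed basepoint. The closure of $\mathcal{C}$ under conjugacy and intersections is what allows Theorem \ref{hco16} to be invoked. If the pullback-of-cofibration step is awkward, we would first try an alternative. Take a $G$-invariant contractible open neighbourhood $U$ of $*$, available since $B$ is $G$-LEC by Theorem \ref{hco15}. Then $\tilde p^{-1}(U)$ is $\mathcal{C}$-cofibrant by Corollary \ref{hco11} and fibre homotopy equivalent to $U \times \tilde F$, and $\tilde F$ is a retract of it.
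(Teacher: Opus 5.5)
Your proposal is correct and is essentially the paper's argument. Both replace $p$ by a $\mathcal{C}$-fibration between $\mathcal{C}$-cofibrant spaces (an $hG$-fibration by Theorem \ref{hco16}), identify its fibre over a $G$-fixed basepoint with $F$ up to $G$-homotopy, and apply the pullback lemma for $hG$-cofibrations along $hG$-fibrations together with Theorem \ref{hco6}. The paper does the replacement in one step, as a fibrant-cofibrant approximation in the slice $\mathcal{C}$-model structure over $B$, and compares fibres by right properness of the $hG$-model structure rather than by Dold's theorem; your choice of $*$ as a vertex gives a cleaner reason than the paper's for $* \to B$ being an $hG$-cofibration.

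The loose ends you flag are routine:
\begin{itemize}
\item $G \in \mathcal{C}$, because $B'$ has a $G$-fixed point and points of a $\mathcal{C}$-cell complex have isotropy conjugate to the orbit type of their cell.
\item The strict map over $B$ that Dold's theorem needs comes from lifting the $\mathcal{C}$-acyclic $\mathcal{C}$-cofibration $E'' \to \tilde E$ against $p$, which is a $\mathcal{C}$-fibration. This gives a map $\tilde E \to E$ over $B$, which is a $G$-homotopy equivalence by two-out-of-three.
\end{itemize}
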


\begin{proof}
	The $\mathcal{C}$-model structure induces a model structure on the category $(\textbf{G-Sp}) \downarrow B$, \cite[Theorem 7.6.5]{H03}. Let $\tilde{p}$ be a fibrant cofibrant approximation to $p$ in this model category, \cite[Definition 8.1.22]{H03}. We can write $\tilde{p} = qi$, where $i$ is a trivial cofibration and $q$ is a fibration in $(\textbf{G-Sp}) \downarrow B$. Since $i$ has fibrant domain, since $p$ is also a $\mathcal{C}$-fibration, there is a weak equivalence $r$ such that $ri = 1$. Since we are working in $(G-Sp) \downarrow B$, this construction yields a commutative square:
	
\[	\begin{tikzcd}
		\tilde{E} \arrow[two heads]{r}{q} \arrow[swap, "\simeq" ']{d}{fr} & \tilde{B} \arrow[two heads, "\simeq" ']{d}{g} \\
		E \arrow[two heads, swap]{r}{p} & B
	\end{tikzcd} \]

Since $\tilde{E}$ and $\tilde{B}$ are $\mathcal{C}$-cofibrant, $q$ is an $hG$-fibration by Theorem \ref{hco16}. We know that $p$ is an $hG$-fibration by assumption. The vertical maps are $hG$-equivalences since $E$ and $B$ are $m \mathcal{C}$-cofibrant. Therefore, $\tilde{B}$ contains a $G$-fixed point and so $G \in \mathcal{C}$, since $\tilde{B}$ is a retract of a $\mathcal{C}$-cell complex. Since $g$ is a $\mathcal{C}$-acyclic $\mathcal{C}$-fibration, we can choose a $G$-fixed basepoint for $\tilde{B}$ which maps to $*$ under $g$. We also denote this basepoint by $*$. Since the $hG$-model structure is right proper, the induced map $q^{-1}(*) \to F$ is a $G$-homotopy equivalence. Since $* \to \tilde{B}$ is an $hG$-cofibration and $q$ is an $hG$-fibration, $q^{-1}(*) \to \tilde{E}$ is an $hG$-cofibration by \cite[Lemma 1.3.1]{MP12}. Since $\tilde{E}$ is $ \mathcal{C}$-cofibrant, it follows that $q^{-1}(*)$ is $\mathcal{C}$-cofibrant by Theorem \ref{hco6}. Therefore, $F$ is $m \mathcal{C}$-cofibrant as desired.
\end{proof}

Before moving on to the converse of Waner's theorem, we record some routine lemmas:

\begin{lemma} \label{hco60}
	If $X$ is an $m \mathcal{C}$-cofibrant $G$-space and $H$ is a subgroup of $G$, then $X$ is $m (H \cap \mathcal{C})$-cofibrant as an $H$-space.
\end{lemma}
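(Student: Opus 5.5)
By \cite[Corollary 3.7]{C06} and the discussion at the start of this section, $X$ is $m\mathcal{C}$-cofibrant exactly when it is $G$-homotopy equivalent to a $\mathcal{C}$-cofibrant $G$-space. I would therefore fix a $\mathcal{C}$-cofibrant $G$-space $Y$ and $G$-maps $f: X \to Y$ and $g: Y \to X$ with $G$-homotopies $gf \simeq 1_X$ and $fg \simeq 1_Y$.

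Next I would restrict the action along $H \subset G$. A $G$-map is in particular an $H$-map, and a $G$-homotopy $X \times I \to X$ is in particular an $H$-homotopy. Hence $f$ and $g$ exhibit $X$ and $Y$ as $H$-homotopy equivalent $H$-spaces.

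By Lemma \ref{hco17}, $Y$ is $(\mathcal{C} \cap H)$-cofibrant as an $H$-space. Two points need checking before applying the $H$-version of \cite[Corollary 3.7]{C06}.

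First, $\mathcal{C} \cap H = \{K \cap H \mid K \in \mathcal{C}\}$ should be closed under $H$-conjugacy, as the conventions require. This holds because $h(K\cap H)h^{-1} = hKh^{-1} \cap H$ for $h \in H$, and $\mathcal{C}$ is closed under $G$-conjugacy.

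Second, the $m(\mathcal{C} \cap H)$-model structure on $H$-spaces must exist. It is the mixing of the $(\mathcal{C}\cap H)$-model structure on $H$-spaces with the $hH$-model structure. Cole's hypotheses hold, since $hH$-equivalences are $(\mathcal{C}\cap H)$-equivalences and $hH$-fibrations are $(\mathcal{C}\cap H)$-fibrations, exactly as for $G$.

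With both in place, \cite[Corollary 3.7]{C06} applied to $H$-spaces gives that an $H$-space is $m(\mathcal{C}\cap H)$-cofibrant iff it has the $H$-homotopy type of a $(\mathcal{C}\cap H)$-cofibrant $H$-space. Since $X$ is $H$-homotopy equivalent to $Y$, it follows that $X$ is $m(H\cap\mathcal{C})$-cofibrant.

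The argument is routine. The only real content is Lemma \ref{hco17}, together with the bookkeeping that the mixed model structure and Cole's characterisation remain available for the subgroup $H$ and the family $\mathcal{C}\cap H$.
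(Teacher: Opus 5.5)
Your proposal is correct and matches the paper's proof: both take a $G$-homotopy equivalence from $X$ to a $\mathcal{C}$-cofibrant $G$-space $Y$ (the paper uses a $\mathcal{C}$-cell complex), regard it as an $H$-homotopy equivalence, and apply Lemma \ref{hco17} to $Y$. Your extra checks are the bookkeeping the paper leaves implicit: that $\mathcal{C}\cap H$ is closed under $H$-conjugacy, and that Cole's mixed model structure and his Corollary 3.7 are available for $H$-spaces.
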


\begin{proof}
	We know that $X$ is $G$-homotopy equivalent to a $\mathcal{C}$-cell complex, $Y$, which in turn is $(H \cap \mathcal{C})$-cofibrant as an $H$-space by Lemma \ref{hco17}.
\end{proof}

The next lemma follows as in \cite[Lemma 2.4]{M75}:

\begin{lemma} \label{hco61}
	If $p: E \to B \times I$ is an $hG$-fibration, then there is a $G$-homotopy equivalence over $B$, $E_0 \simeq E_1$, where $E_t$ is the preimage of $B \times \{t\}$.
\end{lemma}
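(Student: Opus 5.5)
We will follow the classical argument of \cite[Lemma 2.4]{M75}, done $G$-equivariantly; the $hG$-fibration hypothesis supplies every lift we need. Write $i_t: B \to B \times I$ for the inclusion at $t$ and $E_t = p^{-1}(B \times \{t\})$, viewed as a $G$-space over $B$ via $\pi_B \circ p$.

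\emph{Transport maps.} The map $f: E_0 \to E_1$ comes from the lifting problem with top map the inclusion $E_0 \to E$ at time $0$ and bottom homotopy
\[
E_0 \times I \to B \times I, \qquad (e,s) \mapsto (\pi_B p(e), s).
\]
Since $p$ is an $hG$-fibration, there is a $G$-equivariant lift $\Phi: E_0 \times I \to E$, and we set $f = \Phi(-,1)$. By construction $p\Phi(e,s) = (\pi_B p(e), s)$, so $f$ lands in $E_1$ and is a map over $B$. Running the same construction with the reversed path $s \mapsto 1-s$ gives $g: E_1 \to E_0$ over $B$, together with a lift $\Psi$.

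\emph{The composite $gf$.} We need $gf \simeq 1_{E_0}$ through $G$-maps over $B$, that is, a fibrewise homotopy. Concatenating $\Phi$ with $\Psi \circ (f \times 1)$ gives a homotopy $E_0 \times I \to E$ from the inclusion of $E_0$ to $gf$. It lies over the loop $s \mapsto 0 \to 1 \to 0$ in the $I$-coordinate, with the $B$-coordinate held fixed at $\pi_B p(e)$. That loop is contracted rel endpoints by the $G$-map
\[
\Theta: E_0 \times I \times I \to B \times I,
\]
which shrinks the excursion linearly to the constant path at $0$, keeping the $B$-coordinate fixed.

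\emph{Main obstacle: the relative lift.} We now lift $\Theta$, using that $p$ has the RLP with respect to
\[
E_0 \times (I \times \{0\} \cup \partial I \times I) \to E_0 \times I \times I.
\]
This holds because the pair $(I\times I,\ I \times \{0\} \cup \partial I \times I)$ is homeomorphic to $(I \times I,\ I \times \{0\})$ with $G$ acting trivially. Prescribe the data on the subspace as follows: on $I \times \{0\}$ take the concatenated homotopy, on $\{0\} \times I$ the constant map at the inclusion, and on $\{1\} \times I$ the constant map at $gf$. These pieces agree at the corners and lie over $\Theta$ there, so the lifting problem is well posed.

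Restrict the resulting lift to the edge $I \times \{1\}$. It lies over the constant path at $0$ with $B$-coordinate fixed, so it is a $G$-homotopy from the inclusion to $gf$ inside $E_0$, over $B$.

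The symmetric argument gives $fg \simeq 1_{E_1}$ over $B$. Hence $f$ is a $G$-homotopy equivalence over $B$, which proves Lemma \ref{hco61}.
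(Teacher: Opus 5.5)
Your proof is correct. The transport maps $f=\Phi(-,1)$ and $g=\Psi(-,1)$ are $G$-maps over $B$. The relative lift against $E_0\times(I\times\{0\}\cup\partial I\times I)\to E_0\times I\times I$ is legitimate, because this map is isomorphic to $X\times\{0\}\to X\times I$ with $X=E_0\times I$ and $G$ acting trivially on the interval coordinates. That lift gives the fibrewise $G$-homotopy $1\simeq gf$ over $B$, and the symmetric argument handles $fg$.

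Your argument is essentially the same as the paper's: the paper gives no proof beyond saying the lemma follows as in \cite[Lemma 2.4]{M75}, and you carry out that classical argument equivariantly.
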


\begin{corollary} \label{hco62}
	If $p: E \to \frac{G}{H} \times D^n$ is an $hG$-fibration and $* \in D^n$, then $E$ is $G$-homotopy equivalent over $\frac{G}{H} \times D^n$ to $(G \times_H F) \times D^n$, where $F = p^{-1}(1,*)$.
\end{corollary}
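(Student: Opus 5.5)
Fix the point $(1,*) \in G/H \times D^n$. The plan is to first reduce to a fibration over $D^n$ alone, then trivialise it by contracting $D^n$ using Lemma \ref{hco61}, and finally induce back up from $H$ to $G$.

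\textbf{Reduction to $H$.} A $G$-space over $G/H$ is the same thing as $G \times_H E'$, where $E'$ is the fibre over the coset $eH$; here $E'$ is an $H$-space. Set $E' := p^{-1}(\{eH\} \times D^n)$. Then $E \cong G \times_H E'$ over $G/H \times D^n = G \times_H D^n$, where $H$ acts trivially on $D^n$. Next I would check that $p' : E' \to D^n$ is an $hH$-fibration. Take an $H$-lifting problem of $X \times \{0\} \to X \times I$ against $p'$. Apply $G \times_H -$ to it, solve it using the fact that $p$ is an $hG$-fibration, and restrict the solution to the part lying over $eH$. Restriction gives back the original problem, since $(G \times_H Y)$ over $eH$ is $Y$.

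\textbf{Trivialisation over $D^n$.} Use the straight-line contraction $c : D^n \times I \to D^n$, $c(x,t) = (1-t)x + t*$. This is $H$-equivariant because $H$ acts trivially on $D^n$. Pull $p'$ back along $c$ to get an $hH$-fibration $c^*E' \to D^n \times I$. At $t=0$ the fibre is $E'$ itself. At $t=1$ the fibre is $D^n \times p'^{-1}(*) = D^n \times F$, with the projection to $D^n$. Lemma \ref{hco61}, applied with $H$ in place of $G$, gives an $H$-homotopy equivalence over $D^n$
\[ E' \simeq F \times D^n . \]

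\textbf{Inducing back up.} Apply $G \times_H -$. This functor sends $H$-homotopies over $D^n$ to $G$-homotopies over $G \times_H D^n = G/H \times D^n$, because $G \times_H (Y \times I) \cong (G \times_H Y) \times I$. So we get a $G$-homotopy equivalence over $G/H \times D^n$
\[ E \cong G \times_H E' \simeq G \times_H (F \times D^n) \cong (G \times_H F) \times D^n , \]
where the last isomorphism uses again that $H$ acts trivially on $D^n$.

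\textbf{Main obstacle.} The delicate step is the first one: checking that restricting to the fibre over $eH$ preserves the $h$-fibration property. The whole argument depends on the equivalence between $G$-spaces over $G/H$ and $H$-spaces, and the point is that this equivalence respects products with $I$. If that step gives trouble, the fallback is to apply Lemma \ref{hco61} directly to $E$, pulled back along $1 \times c : G/H \times D^n \times I \to G/H \times D^n$. That gives $E \simeq (p^{-1}(G/H \times *)) \times D^n$ over the base, and then the same fibrewise identification shows $p^{-1}(G/H \times *) \cong G \times_H F$.
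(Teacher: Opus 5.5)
Your proposal is correct. Your fallback is exactly the paper's proof. The paper pulls $p$ back along the $G$-homotopy $1 \times c \colon G/H \times D^n \times I \to G/H \times D^n$ and applies Lemma \ref{hco61} directly in the $G$-equivariant setting, giving $E \simeq p^{-1}(G/H \times \{*\}) \times D^n$ over $G/H \times D^n$. It then identifies $p^{-1}(G/H \times \{*\}) \cong G \times_H F$ using \cite[Lemmas 3.1 and 3.2]{R25}.

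Your main route descends first. You use the same structural fact for the whole total space, $E \cong G \times_H E'$, check that $p' \colon E' \to D^n$ is an $hH$-fibration, apply Lemma \ref{hco61} $H$-equivariantly, and induce back up.

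The step you flagged as delicate is fine. Your adjunction argument works, and the lift lands in $E'$ because it lies over $(G \times_H L)$, which sends $X \times I$ into $\{eH\} \times D^n$. Equivalently, $p$ regarded as an $H$-map is an $hH$-fibration, a fact the paper itself uses in the proof of Theorem \ref{hco64}. Then $p'$ is its pullback along the $H$-map $\{eH\} \times D^n \to G/H \times D^n$. Your claim that $G \times_H -$ carries fibrewise $H$-homotopy equivalences over $D^n$ to fibrewise $G$-homotopy equivalences over $G/H \times D^n$ is also fine, since $G \times_H -$ commutes with $- \times I$.

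Comparing the two: the paper's order is more economical. It uses one application of Lemma \ref{hco61} and identifies a single fibre, so nothing needs checking about $p'$ or about induction. Your order costs those two routine checks but gives a slightly more structured conclusion: the equivalence is induced from an $H$-equivalence $E' \simeq F \times D^n$.

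Both versions rest on the homeomorphism $Y \cong G \times_H Y_{eH}$ for a $G$-space $Y$ over $G/H$, where $Y_{eH}$ is the fibre over $eH$. This uses local sections of $G \to G/H$ for compact Lie $G$. You assert it without proof, and the paper supplies it only by citation.
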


\begin{proof}
	Consider the pullback of $p$ along a $G$-homotopy $G/H \times D^n \times I \to G/H \times D^n$ between the identity map and the map induced by $D^n \to * \to D^n$. By Lemma \ref{hco61} and \cite[Lemmas 3.1 and 3.2]{R25}, $E$ is $G$-homotopy equivalent to $(G \times_H F) \times D^n$ over $\frac{G}{H} \times D^n$. 
\end{proof}

We will also need the following lemma, from which it follows that if $X$ is an $m \mathcal{D}$-cofibrant $H$-space, then $G \times_H X$ is $m\bar{\mathcal{D}}$-cofibrant as a $G$-space:

\begin{lemma} \label{hco63}
	If $H$ is a subgroup of $G$ and $X$ is a $\mathcal{D}$-cofibrant $H$-space, then $G \times_H X$ is $\bar{\mathcal{D}}$-cofibrant as a $G$-space, where $\bar{\mathcal{D}}$ is the conjugate closure of $\mathcal{D}$ viewed as a set of subgroups of $G$.
	\end{lemma}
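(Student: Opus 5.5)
The plan is to argue at the level of cell complexes and use that $G \times_H -$ preserves colimits. Since $G \times_H -$ is a left adjoint (to restriction from $G$-spaces to $H$-spaces), it preserves retracts, pushouts and transfinite composites. If $X$ is $\mathcal{D}$-cofibrant, it is a retract of a $\mathcal{D}$-cell complex $Y$. Then $G \times_H X$ is a retract of $G \times_H Y$, so it suffices to treat the case where $X$ is a $\mathcal{D}$-cell complex.

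For a cell complex $X$, the map $\emptyset \to X$ is a transfinite composite of pushouts of maps $H/K \times S^{n-1} \to H/K \times D^n$ with $K \in \mathcal{D}$. Applying $G \times_H -$ gives a transfinite composite of pushouts of the maps
\[
G \times_H (H/K \times S^{n-1}) \to G \times_H (H/K \times D^n).
\]
The key identification is $G \times_H (H/K \times A) \cong G/K \times A$ for a space $A$ with trivial action. This follows from $G \times_H H/K \cong G/K$ via $[g, hK] \mapsto ghK$, together with the fact that $G \times_H -$ commutes with products with trivial-action spaces. Hence each image map is $G/K \times S^{n-1} \to G/K \times D^n$, which is a generating cofibration for the $\bar{\mathcal{D}}$-model structure, since $K \in \mathcal{D} \subset \bar{\mathcal{D}}$. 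Consequently $G \times_H X$ is a $\bar{\mathcal{D}}$-cell complex and in particular $\bar{\mathcal{D}}$-cofibrant.

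The main point to check is that the pushouts and transfinite colimits computed in $H$-spaces (in CGWH) are carried to the corresponding colimits of $G$-spaces. Left adjointness gives this formally, so I only need that $G \times_H -$ really is left adjoint to restriction in the CGWH setting, which is standard.

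Alternatively, one could phrase the argument via lifting. By adjunction, $G \times_H i$ has the LLP against $p$ iff $i$ has the LLP against $\mathrm{res}^G_H p$. It then suffices to observe that if $p^{L}$ is an acyclic $q$-fibration for all $L \in \bar{\mathcal{D}}$, then the same holds for the $H$-fixed points of $\mathrm{res}\,p$ at every $K \in \mathcal{D}$. This is immediate because $\mathcal{D} \subset \bar{\mathcal{D}}$. Either route works, and I would present the lifting version as the cleaner proof.
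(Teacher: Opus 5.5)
Your first route is the paper's proof: reduce to a $\mathcal{D}$-cell complex, use that $G\times_H -$ preserves colimits, and identify the image of each cell $H/K\times S^{n-1}\to H/K\times D^n$ with the generating map $G/K\times S^{n-1}\to G/K\times D^n$. The paper records only $G\times_H H/K\cong G/K$. Your remark that $G\times_H-$ commutes with products by trivial-action spaces supplies the small step it leaves implicit.

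The lifting version you say you would present is a genuinely different argument, and it is also correct. It amounts to observing that restriction $\mathrm{res}^G_H$ carries $\bar{\mathcal D}$-acyclic $\bar{\mathcal D}$-fibrations to $\mathcal D$-acyclic $\mathcal D$-fibrations. This holds because $(\mathrm{res}^G_H p)^K=p^K$ for $K\le H$ and $\mathcal D\subseteq\bar{\mathcal D}$. By adjunction, $G\times_H-$ then sends $\emptyset\to X$ to a map with the LLP against all $\bar{\mathcal D}$-acyclic $\bar{\mathcal D}$-fibrations.

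This route buys you more for less. You need no retract reduction and no cell-by-cell identification. You also get that $G\times_H-$ sends every $\mathcal D$-cofibration to a $\bar{\mathcal D}$-cofibration. The same fixed-point check for fibrations shows it is left Quillen, so it also preserves acyclic cofibrations.

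The paper's cellular route gives slightly more concrete information in return. It shows that $G\times_H X$ is an honest $\bar{\mathcal D}$-cell complex whenever $X$ is a $\mathcal D$-cell complex, with cells $G/K\times D^n$ matching those of $X$.
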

\begin{proof}
	We may as well assume $X$ is a $\mathcal{D}$-cell complex, in which case it is a transfinite composite of pushouts of maps of the form $i: H/K \times S^{n-1} \to H/K \times D^n$ with $K \in \mathcal{D}$. Since $G \times_H - $ preserves colimits, it suffices to show that $G \times_H i$ is a $\bar{\mathcal{D}}$-cofibration. This follows from the observation that $G \times_H (H/K) \cong G/K$. 
\end{proof}

We can now prove the following general form of Waner's theorem and its converse:

\begin{theorem} \label{hco64}
	Let $\mathcal{C}$ be closed under conjugacy and intersections. Let $p: E \to B$ be an $hG$-fibration such that $B$ is $m \mathcal{C}$-cofibrant. Then $E$ is $m \mathcal{C}$-cofibrant iff for every $H$ and $b \in B$ with isotropy group $H$, $p^{-1}(b)$ is $m (H \cap \mathcal{C})$-cofibrant. \end{theorem}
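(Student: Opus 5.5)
We prove the two directions separately, working in the $m\mathcal{C}$-model structure and using that a $G$-space is $m\mathcal{C}$-cofibrant iff it has the $G$-homotopy type of a $\mathcal{C}$-cofibrant (equivalently $\mathcal{C}$-CW) $G$-space.

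\emph{Forward direction.} Suppose $E$ is $m\mathcal{C}$-cofibrant, and let $b \in B$ have isotropy group $H$. By Lemma \ref{hco60}, both $E$ and $B$ are $m(H \cap \mathcal{C})$-cofibrant as $H$-spaces. Restricting the action of a lifting problem shows that $p$ is an $hH$-fibration. The point $b$ is $H$-fixed, and $H \cap \mathcal{C}$ is closed under conjugacy in $H$ and under intersections. So Theorem \ref{hco54}, applied with $G$ replaced by $H$ and $\mathcal{C}$ by $H \cap \mathcal{C}$, gives that $p^{-1}(b)$ is $m(H \cap \mathcal{C})$-cofibrant.

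\emph{Converse.} Both hypotheses and conclusion are invariant under $G$-homotopy equivalence of the base: pull $p$ back along a $G$-homotopy equivalence $g: B' \to B$. Since the $hG$-model structure is right proper, $g^*E \to E$ is a $G$-homotopy equivalence. Fibres over $b' \in B'$ agree with fibres over $g(b')$, whose isotropy group contains that of $b'$, so Lemma \ref{hco60} keeps the fibre hypothesis. We may therefore assume $B$ is a $\mathcal{C}$-CW complex, filtered by skeleta $B_n$, and induct on cells.

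Suppose $E|_{B_n} := p^{-1}(B_n)$ is $m\mathcal{C}$-cofibrant, and $B_{n+1}$ is obtained from $B_n$ by attaching cells $G/H \times D^{n+1}$ with $H \in \mathcal{C}$. Then $E|_{B_{n+1}}$ is the pushout of $E|_{B_n} \leftarrow p^{-1}(\sqcup G/H \times S^n) \to p^{-1}(\sqcup G/H \times D^{n+1})$, where the pulled-back fibrations are taken along the characteristic maps. The right map is the restriction of an $hG$-fibration to $hG$-cofibration subspaces, so it is an $hG$-cofibration by \cite[Lemma 1.3.1]{MP12}; hence the pushout is a homotopy pushout in the $hG$ structure. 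By Corollary \ref{hco62}, $p^{-1}(G/H \times D^{n+1}) \simeq (G \times_H F) \times D^{n+1}$ over the cell, with $F$ the fibre over a point of isotropy $H$. This fibre is $m(H\cap\mathcal{C})$-cofibrant by hypothesis, so $G \times_H F$ is $m\mathcal{C}$-cofibrant by Lemma \ref{hco63}; here $\overline{H\cap\mathcal{C}} \subseteq \mathcal{C}$ because $\mathcal{C}$ is closed under intersections and conjugacy. Restricting this fibre homotopy equivalence over $G/H \times S^n$ shows $p^{-1}(G/H\times S^n) \simeq (G\times_H F)\times S^n$, which is $m\mathcal{C}$-cofibrant since it is a product with a CW complex. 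A pushout of $m\mathcal{C}$-cofibrant spaces along an $hG$-cofibration, i.e.\ an $m$-cofibration between $m$-cofibrant objects \cite[Proposition 3.11]{C06}, is $m\mathcal{C}$-cofibrant. Thus $E|_{B_{n+1}}$ is $m\mathcal{C}$-cofibrant. The base case $B_0$ is a disjoint union of orbits $G/H$, where $p^{-1}(G/H) \cong G\times_H F$ is handled by Lemma \ref{hco63}.

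\emph{Main obstacle.} The delicate step is the passage to the colimit $E = \operatorname{colim} E|_{B_n}$. The inclusions $E|_{B_n} \to E|_{B_{n+1}}$ are $hG$-cofibrations between $m\mathcal{C}$-cofibrant spaces, hence $m$-cofibrations by \cite[Proposition 3.11]{C06}. A sequential colimit of $m$-cofibrations starting from an $m$-cofibrant object is $m$-cofibrant, which gives the result. The care needed is to make the pushout step genuinely a pushout along an $m$-cofibration, not merely up to homotopy. This is arranged by the gluing argument above (homotopy invariance of homotopy pushouts in the $m$-structure, using that $m$-equivalences are $hG$-equivalences between $m$-cofibrant objects). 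It must also be checked that the fibre hypothesis survives the reduction of the base, and that every cell's isotropy group arises as the isotropy group of some point of $B$.
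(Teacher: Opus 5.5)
Your proposal is correct and is essentially the paper's proof: the forward direction goes via Lemma \ref{hco60} and Theorem \ref{hco54} applied to $H$ and $H \cap \mathcal{C}$, and the converse pulls back to a $\mathcal{C}$-CW base using right properness, then inducts over skeleta with Corollary \ref{hco62}, Lemma \ref{hco63}, \cite[Lemma 1.3.1]{MP12} and \cite[Proposition 3.11]{C06}. You leave implicit only the point-set facts that pulling back along $p$ preserves the cell-attaching pushouts and the skeletal colimit (the paper uses \cite[Lemma 6.2.3]{R23} and Lemma \ref{hco9}); your closing remark about homotopy pushouts is unnecessary, since the pushout is already a strict pushout along an $m\mathcal{C}$-cofibration.
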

\begin{proof}	
	($\implies$) If $E$ is $m \mathcal{C}$-cofibrant and $b \in B$ has isotropy group $H$, then both $E$ and $B$ are $m (H \cap \mathcal{C})$-cofibrant as $H$-spaces, by Lemma \ref{hco60}. Moreover, $p$ is an $hH$-fibration when viewed as an $H$-map, so $p^{-1}(b)$ is $m (H \cap \mathcal{C})$-cofibrant by Theorem \ref{hco54}.

	($\impliedby$) We'll first show that we can reduce to the case where $B$ is a $\mathcal{C}$-CW complex. Let $h: \tilde{B} \to B$ be a $\mathcal{C}$-CW approximation and consider the pullback:	
	\[	\begin{tikzcd}
		\tilde{E} \arrow{r}{k} \arrow[swap, two heads]{d}{q} & E \arrow[two heads]{d}{p} \\
		\tilde{B} \arrow[swap]{r}{h} & B 
	\end{tikzcd}
	\]	
	Since $B$ is $m \mathcal{C}$-cofibrant and the $hG$-model structure is proper, \cite[Theorem 6.1.2]{R23}, both $h$ and $k$ are $G$-homotopy equivalences. If $\tilde{b} \in \tilde{B}$, $b = h(\tilde{b})$, $\tilde{b}$ has isotropy group $\tilde{H}$, and $b$ has isotropy group $H$, then $H \supseteq \tilde{H}$. By assumption, $p^{-1}(b)$ is $m (H \cap \mathcal{C})$-cofibrant, so $q^{-1}(\tilde{b})$ is $m (\tilde{H} \cap \mathcal{C})$-cofibrant by Lemma \ref{hco60}, since it is equal to $p^{-1}(b)$ as an $\tilde{H}$-space. So we will assume from now on that $B$ is a $\mathcal{C}$-CW complex.
	
	Suppose that the $n$-skeleton, $B_{(n)}$, of $B$ is defined by attaching maps $\alpha_i: G/ H_i \times S^{n-1} \to B_{(n-1)}$, where $i$ ranges over some indexing set $I$ and for each $i$, $H_i \in \mathcal{C}$. Let $e_i$ denote the inclusion of the $i$th cell into $B_{(n)}$, $e_i: G / H_i \times D^n \to B_{(n)}$. Define $P_i^{S^{n-1}}$ to be the pullback of $\alpha_i$ along $p$, and $P_i^{D^n}$ to be the pullback of $e_i$ along $p$. Then, using Corollary \ref{hco62}, $P_i^{S^{n-1}} \simeq (G \times_{H_i} F_i) \times S^{n-1} $ and $P_i^{D^n} \simeq (G \times_{H_i} F_i) \times D^n$, where $F_i = p^{-1}(b_i)$ for some $b_i \in B$ with isotropy group $H_i$. Since $B$ is $\mathcal{C}$-cofibrant and $b_i \in B$, $H_i \in \mathcal{C}$. Moreover, by assumption, $F_i$ is $m (H_i \cap  \mathcal{C})$-cofibrant, so it follows that $P_i^{S^{n-1}}$ and $P_i^{D^n}$ are $m \mathcal{C}$-cofibrant by Lemma \ref{hco63} and the fact that $\mathcal{C}$ is closed under intersections. Using \cite[Lemma 6.2.3]{R23}, we have a pushout:
	
	\[ \begin{tikzcd}
		\sqcup_i P_i^{S^{n-1}} \arrow{d} \arrow{r} & p^{-1}(B_{(n-1)}) \arrow{d} \\ \sqcup_i P_i^{D^n} \arrow{r} & p^{-1}(B_{(n)}) \\
	\end{tikzcd} \]
	Since the pullback of an $hG$-cofibration along an $hG$-fibration is an $hG$-cofibration, \cite[Lemma 1.3.1]{MP12}, $P_i^{S^{n-1}} \to P_i^{D^n}$ is an $hG$-cofibration. Therefore, the LHS map is an $hG$-cofibration between $m \mathcal{C}$-cofibrant objects and, hence, an $m \mathcal{C}$-cofibration, by \cite[Proposition 3.11]{C06}. Therefore, the RHS is also an $m \mathcal{C}$-cofibration, and so, inductively, $p^{-1}(B) = E$ is $m \mathcal{C}$-cofibrant.		
\end{proof}

\section{Appendix of point set topology}

\begin{lemma} \label{hco9}
	Let $I$ be a filtered category and let $D: I \to \mathcal{U}$ be a functor which takes maps in $I$ to closed inclusions. Let $A := colim D$ and $f: X \to A$ be a map. Define $Y(i)$ to be the pullback of $D(i) \to A$ along $f$. Then the canonical map $colim Y \to X$ is a homeomorphism.
\end{lemma}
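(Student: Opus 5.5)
We need to prove: $I$ filtered, $D$ takes maps to closed inclusions, $A=\mathrm{colim}\,D$, $f\colon X\to A$, $Y(i)=f^{-1}(D(i))$ with the pullback topology. Then the canonical map $\mathrm{colim}\, Y\to X$ is a homeomorphism.

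The plan is to compare the colimit topology on $\bigcup_i Y(i)$ with the topology of $X$. We show in turn that the canonical map is a continuous bijection and that it is closed.

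\textbf{The canonical map is a continuous bijection.} First we identify the underlying sets.

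Since $I$ is filtered and the transition maps are injective, the underlying set of $A$ is the filtered union of the images of the $D(i)$. This needs a little care in CGWH, since colimits there are computed as the $k$-ification of a quotient of the colimit in $\textbf{Top}$, possibly followed by a weak Hausdorffification. For closed inclusions of weak Hausdorff spaces along a filtered diagram, the $\textbf{Top}$-colimit is already weak Hausdorff and compactly generated, and each $D(i)\to A$ is a closed inclusion. I would cite the standard fact, e.g. the analogue of \cite[Lemma 6.2.3]{R23} or Strickland's notes, for sequential and filtered colimits along closed inclusions. In particular, a subset $C\subseteq A$ is closed iff $C\cap D(i)$ is closed in $D(i)$ for all $i$.

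The pullback $Y(i)=f^{-1}(D(i))$ in CGWH is the $k$-ification of the subspace topology. Since $D(i)\subseteq A$ is closed, $Y(i)$ is a closed subspace of $X$, and closed subspaces of CGWH spaces are already CGWH. The maps $Y(i)\to Y(j)$ are therefore closed inclusions.

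Every $x\in X$ has $f(x)$ lying in some $D(i)$, so $X=\bigcup_i Y(i)$. Filteredness makes the colimit set the union, so the canonical map is a bijection. It is continuous by the universal property.

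\textbf{The canonical map is closed.} This is the main step. Let $C\subseteq X$ be such that $C\cap Y(i)$ is closed in $Y(i)$ for every $i$; we must show $C$ is closed in $X$. Since $X$ is compactly generated, it suffices to show that $u^{-1}(C)$ is closed in $K$ for every compact Hausdorff $K$ and every map $u\colon K\to X$.

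The key input is the standard compactness lemma: a compact Hausdorff space mapping into a filtered colimit along closed inclusions of weak Hausdorff spaces factors through some stage. Applying this to $f u\colon K\to A$, the map $fu$ factors through some $D(i)$. Hence $u$ lands in $f^{-1}(D(i))=Y(i)$. Then $u^{-1}(C)=u^{-1}(C\cap Y(i))$, which is closed because $u\colon K\to Y(i)$ is continuous; the $k$-topology on $Y(i)$ agrees with the subspace topology on compact test maps. So $C$ is closed in $X$.

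The main obstacle is justifying the compactness lemma for filtered (not merely sequential) diagrams in CGWH. I would prove it by contradiction. If the image of $K$ meets infinitely many "new" stages, choose points $k_n$ with $fu(k_n)\notin D(i_n)$ along a chosen increasing sequence of indices. The set $S=\{fu(k_n)\}$ meets each $D(j)$ in finitely many points, by a filteredness argument, so every subset of $S$ is closed in $A$. Thus $S$ is closed and discrete in the compact set $fu(K)$, hence finite, which is a contradiction. I would also remark that if the paper's intended colimit is sequential, this reduces to the classical argument of \cite[Lemma 6.2.3]{R23}.
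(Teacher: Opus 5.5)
Your first step is fine: the colimit is the union with the final topology, each $Y(i)=f^{-1}(D(i))$ is a closed subspace of $X$, and the canonical map is a continuous bijection. Reducing closedness to compact Hausdorff test maps $u\colon K\to X$ is also fine.

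The gap is your ``key input''. For a general filtered diagram it is \emph{false} that a map from a compact Hausdorff space into a colimit of closed inclusions factors through a stage. Take $I$ to be the poset of countable closed subsets $F\subseteq[0,1]$ under inclusion, with $D(F)=F$. This is filtered (take unions), and the transition maps are closed inclusions of compact metric spaces. The colimit is $[0,1]$ with its usual topology: if $C\cap F$ is closed in $F$ for every such $F$, then testing against $F=\{c_n\}\cup\{c\}$ for convergent sequences $c_n\to c$ shows that $C$ is sequentially closed. Yet the identity of $[0,1]$ factors through no countable stage.

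Your sketch breaks exactly at ``by a filteredness argument''. Nothing prevents a single stage from containing infinitely many points of $S$; here a convergent subsequence of $S$ together with its limit is such a stage. Once that happens, the subsets of $S$ need not be closed. Factorization does hold for $\lambda$-sequences, which is all Lemma~\ref{hco10} uses. However, the standard proof there relies on continuity at limit ordinals, which your sketch never invokes. Your finiteness claim also fails in that case, since any stage at or beyond $\sup_n i_n$ contains all of $S$.

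The lemma itself holds for every filtered $I$, and no factorization is needed. The paper notes that $\coprod_i D(i)\to A$ is a quotient map. Pulling it back along $f$ gives $\coprod_i Y(i)\to X$, which is again a quotient map because quotient maps in CGWH are pullback-stable.

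If you prefer to keep your test-map route, use weak Hausdorffness directly. Set $g=fu$, $E_i=g^{-1}(D(i))$ and $C'=u^{-1}(C)$, so that each $C'\cap E_i$ is closed in $K$. Let $c\in\overline{C'}$ with $g(c)\in D(i_0)$.
\begin{itemize}
\item For each closed neighbourhood $V$ of $c$, the set $g(C'\cap V)\cap D(i)=g(C'\cap V\cap E_i)$ is the image of a compact Hausdorff space in the weak Hausdorff space $D(i)$, hence closed.
\item Therefore $g(C'\cap V)$ is closed in $A$. So $g^{-1}g(C'\cap V)$ is a closed set containing $C'\cap V$, and hence containing $c$.
\item Choose $c_V\in C'\cap V$ with $g(c_V)=g(c)$. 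These points form a net in the closed set $C'\cap E_{i_0}$ converging to $c$, so $c\in C'$.
\end{itemize}
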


\begin{proof}
	We can express filtered colimits of closed inclusions, such as $A$, as quotients of the disjoint union of the colimiting subspaces, $\sqcup_{i \in I} D(i) \to A$, \cite[Lemma 3.3]{S09}. We have a pullback:
	
	\[	\begin{tikzcd}
		\sqcup_{i \in I} Y_i  \arrow{d} \arrow{r} & X \arrow{d}{f} \\
		\sqcup_{i \in I} D_i \arrow[swap]{r} & A 		
	\end{tikzcd} \]
	
	Since the pullback of a quotient map is a quotient map, \cite[Propostion 2.36]{S09}, we obtain the result.
\end{proof}

\begin{lemma}
	The following properties of a space $X$ are closed under retracts: being i) Hausdorff, ii) paracompact, iii) completely regular.
\end{lemma}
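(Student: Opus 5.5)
Let $r: Y \to X$ and $s: X \to Y$ be maps with $rs = 1_X$. After composing with a homeomorphism we may regard $X$ as a subspace of $Y$: the map $s$ is a closed embedding when $Y$ is Hausdorff, and a topological embedding in general, since $r$ gives a continuous left inverse on the image. The plan is to handle the three properties one at a time, using the fact that $X \cong s(X)$ is a retract of $Y$.

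\textbf{(i) Hausdorff.} A subspace of a Hausdorff space is Hausdorff, so this case is immediate. Alternatively, $s(X) = \{y \in Y : sr(y) = y\}$ is the equaliser of $1_Y$ and $sr$, hence closed; this closedness is what (ii) needs.

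\textbf{(iii) Completely regular.} Complete regularity is hereditary to arbitrary subspaces. Directly: given $x \in X$ and a closed set $C \subset X$ with $x \notin C$, the set $r^{-1}(C)$ is closed in $Y$ and does not contain $s(x)$. Choose $\phi: Y \to I$ with $\phi(s(x)) = 0$ and $\phi = 1$ on $r^{-1}(C)$. Then $\phi s$ separates $x$ from $C$, because $s(C) \subset r^{-1}(C)$. If the paper's convention includes points being closed, this follows from (i).

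\textbf{(ii) Paracompact.} This is the only step needing care, since paracompactness is not hereditary. Given an open cover $\{U_j\}$ of $X$, pull it back to the open cover $\{r^{-1}(U_j)\}$ of $Y$. Take a locally finite open refinement $\{V_k\}$ in $Y$, so each $V_k \subset r^{-1}(U_{j(k)})$. Then $\{s^{-1}(V_k)\}$ is an open cover of $X$. It refines $\{U_j\}$, since $s^{-1}(V_k) \subset s^{-1}r^{-1}(U_{j(k)}) = U_{j(k)}$. It is locally finite: a neighbourhood $W$ of $s(x)$ meeting only finitely many $V_k$ pulls back to a neighbourhood $s^{-1}(W)$ of $x$ meeting only finitely many $s^{-1}(V_k)$. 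This argument avoids closedness entirely. If the convention for paracompact includes Hausdorff, combine with (i).

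I expect the main obstacle to be remembering that (ii) does not follow from heredity. The pullback along $r$ and then along $s$ is the right fix, and every other step is routine.
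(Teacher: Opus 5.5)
Your proposal is correct and is essentially the paper's proof. The paper likewise handles paracompactness by pulling the cover back along $r$, taking a locally finite refinement and pulling it back along the section. It handles complete regularity by composing with the section a function that separates the image point from $r^{-1}(C)$, and the Hausdorff case by pulling back disjoint open sets along the injective section; your stray remark in (i) that closedness of $s(X)$ is what (ii) needs is harmless but unnecessary, as your own argument for (ii) shows.
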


\begin{proof}
	
	Suppose that we have maps $i: A \to X, r: X \to A$ with $ri = 1$. If $X$ is Hausdorff, let $a$ and $b$ be distinct points in $A$. Then there exists disjoint open sets $U$ and $V$ in $X$ separating $i(a)$ and $i(b)$. So $i^{-1}(U)$ and $i^{-1}(V)$ are disjoint and separate $a$ and $b$. So $A$ is Hausdorff.
	
	 If $X$ is paracompact and $\{U_i\}_{i \in I}$ is an open cover of $A$, then $\{r^{-1}(U_i)\}$ is an open cover of $X$ which admits a locally finite open refinement $\{V_j\}_{j \in J}$. Then, for each $j$ $i^{-1}(V_j)$ is contained within $i^{-1}r^{-1}(U_i) = U_i$ for some $i$, so $\{i^{-1}(V_j)\}$ is an open refinement of $\{U_i\}$. Moreover, for every $a \in A$ there is an open neighbourhood, $W$, of $i(a)$ which intersects only finitely many of the $V_j$. Then $i^{-1}(W)$ is an open neighbourhood of $a$, intersecting only finitely many of the $i^{-1}(V_j)$. So $A$ is paracompact.
	
	Next suppose that $X$ is completely regular. Let $a \in A$ and $C$ be a closed subspace of $A$ not containing $a$. Then $r^{-1}(C)$ is a closed subspace of $X$ not containing $i(a)$. So there exists a continuous function $\lambda: X \to [0,1]$ such that $\lambda i (a) = 0$ and $\lambda(r^{-1}(C)) = 1 \implies \lambda i (C) = 1$. So $A$ is completely regular.
\end{proof}

\begin{lemma} \label{hco50}
	Let $C$ be a closed subset of $X$ and define an equivalence relation on $X \times I$ by $(x,t) \sim (y,s)$ iff $x = y$ and either $s = t$ or $x \in C$. Let $f: A \to X$ and $g: f^{-1}(X / C) \to I$ be continuous maps. Define a function $F: A \to (X \times I)/ \sim$ by:
	
	\begin{equation*}
		F(a) =
		\begin{cases}
			(f(a), g(a)) & \text{if} \ \ a \in f^{-1}(X/C) \\
			(f(a), 1) & \text{otherwise}
		\end{cases}
	\end{equation*}
	
Then $F$ is continuous.
\end{lemma}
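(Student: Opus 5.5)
The plan is to check continuity of $F$ directly from the quotient topology on $Q := (X \times I)/\sim$. Let $q : X \times I \to Q$ be the quotient map. We show that $F^{-1}(W)$ is open in $A$ for every open $W \subset Q$, and we do this pointwise: for each $a \in F^{-1}(W)$ we find an open neighbourhood of $a$ that $F$ maps into $W$. Here $X/C$ means the complement $X \setminus C$, which is open because $C$ is closed. Hence $f^{-1}(X \setminus C)$ is open in $A$.

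\emph{Case $f(a) \notin C$.} On the open set $f^{-1}(X \setminus C)$ we have $F = q \circ (f,g)$. This is a composite of continuous maps, so $F$ is continuous on that open neighbourhood of $a$, and we are done at $a$.

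\emph{Case $f(a) \in C$.} Then $F(a) = q(f(a),1)$. Since $f(a) \in C$, the whole segment $\{f(a)\} \times I$ is a single equivalence class. Because $q^{-1}(W)$ is open and saturated, it therefore contains $\{f(a)\} \times I$. By the tube lemma, using compactness of $I$, there is an open $V \ni f(a)$ with $V \times I \subset q^{-1}(W)$. Now take $a' \in f^{-1}(V)$. In either branch of the definition, $F(a') = q(f(a'),s)$ for some $s \in I$, so $F(a') \in q(V \times I) \subset W$. Thus $f^{-1}(V)$ is an open neighbourhood of $a$ that $F$ maps into $W$.

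The main point requiring care is the point-set convention: all spaces are CGWH, so $X \times I$ and $Q$ carry the $k$-ified topologies. Since $I$ is compact Hausdorff, hence locally compact, the $k$-product $X \times I$ agrees with the ordinary product. So the tube lemma applies in the form used above. The quotient of a $k$-space by an equivalence relation is again a $k$-space, with open sets detected by their saturated preimages as above. The question of weak Hausdorffness of $Q$ does not affect the continuity argument; at worst one passes to the weak Hausdorff reflection, which receives a continuous map from $Q$. Finally, continuity of a map out of the $k$-space $A$ is checked exactly by the openness of preimages, which is what we verified.
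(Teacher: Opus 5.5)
Your proof is correct and is essentially the paper's argument in dual form. The paper works with a closed set $E \subset (X\times I)/{\sim}$ and uses that the projection $\pi\colon (X\times I)/{\sim} \to X$ is closed, by compactness of $I$, to write $F^{-1}(E)$ as the union of a closed set lying over $X \setminus C$ and $f^{-1}(C \cap \pi(E))$; you instead argue pointwise with open sets via the tube lemma, which is the same compactness fact, with the same case split along $f^{-1}(C)$ and the same use of continuity of $(f,g)$ on the open set $f^{-1}(X\setminus C)$.
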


\begin{proof}
	Let $V = X / C$ and $\pi: (X \times I) / \sim \ \to X$ be the projection. Since $\sim$ is a closed equivalence relation, $(X \times I) / \sim$ is CGWH with the usual quotient topology, \cite[pg. 40]{M99}. Since $I$ is compact Hausdorff, $\pi$ is a closed map. Let $E \subset (X \times I) / \sim$ be closed. Then $F^{-1}(E) \cap f^{-1}(V)$ is closed in $f^{-1}(V)$ by the continuity of $f$ and $g$. Moreover, $F^{-1}(E) \cap f^{-1}(V) \subset f^{-1}(\pi(E))$, which is closed since $\pi$ is a closed map and $f$ is continuous. Therefore, there exists a closed subset $B$ of $f^{-1}(\pi(E))$ such that $B \cap f^{-1}(V) = F^{-1}(E) \cap f^{-1}(V)$. Now $F^{-1}(E) \cap f^{-1}(C) = f^{-1}(C \cap \pi(E))$ is closed, since $C$ is closed. So:
	\[
	F^{-1}(E) = B \cup f^{-1}(C \cap \pi(E))
	\]
	is closed, so $F$ is continuous.
\end{proof}

\begin{lemma} \label{hco51}
	Let $i: A \to B$ be an $hG$-cofibration represented by an $G$-NDR-pair $(H, \lambda)$ and suppose that $j:B \to C$ is an $hG$-cofibration. Then there exists a $G$-NDR-pair, $(L, \tau)$, representing the composite $k:= ji: A \to C$ as an $hG$-cofibration such that $L|_{B \times I} = H$ and $\tau|_B = \lambda$.
\end{lemma}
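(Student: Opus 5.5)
We need to construct a $G$-NDR pair $(L,\tau)$ for $A \to C$ extending $(H,\lambda)$ on $B$. Let $(K,\mu)$ be a $G$-NDR pair representing $j: B \to C$; following the conventions of \cite{MP12}, $\mu: C \to I$ with $\mu^{-1}(0) = B$, and $K: C \times I \to C$ with $K(c,0)=c$, $K(b,t)=b$ for $b \in B$, and $K(c,1) \in B$ whenever $\mu(c)<1$. The plan is to follow the standard proof that a composite of NDR pairs is an NDR pair (e.g.\ the Strøm-type argument), arranging the formulas so that they literally restrict to $(H,\lambda)$ on $B$.

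For the function I would first retract a neighbourhood of $B$ into $B$ and then pull back $\lambda$. Set $r(c) = K(c,1)$ on $U = \mu^{-1}([0,1))$, and define
\[
\tau(c) = \min\bigl(1, \lambda(r(c)) + \phi(\mu(c))\bigr) \ \text{for } c \in U, \qquad \tau(c)=1 \ \text{otherwise},
\]
where $\phi: I \to I$ is continuous, $\phi(0)=0$, $\phi(s)>0$ for $s>0$, and $\phi(s)=1$ for $s \geq 1/2$; for instance $\phi(s)=\min(1,2s)$. This is continuous because it equals $1$ on the open set $\mu^{-1}((1/2,1])$. It also satisfies $\tau|_B=\lambda$, since $\mu=0$ and $r=\mathrm{id}$ on $B$. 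Moreover $\tau^{-1}(0) = \{c : \mu(c)=0, \lambda(c)=0\} = A$. Equivariance is automatic, since all ingredients are $G$-maps.

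For the homotopy I would first deform by $K$ and then by $H$ applied to $r(c)$. Define $L(c,t) = K(c,2t)$ for $t \le 1/2$, and $L(c,t)=H(K(c,1), 2t-1)$ for $t \ge 1/2$. The second formula makes sense only when $K(c,1)\in B$, which is where the difficulty lies. The fix is to reparametrise so that the $H$-stage is only switched on where $\mu$ is small. Concretely, set
\[
L(c,t) = H\bigl(K(c,\min(1,2t)),\ \psi(c)\max(0,2t-1)\bigr),
\]
where $\psi(c)=\max(0,1-2\mu(c))$. Whenever $\psi(c)>0$ we have $\mu(c)<1/2$, so $K(c,1)\in B$ and $H$ is applied to a point of $B$; when $\psi(c)=0$ the second coordinate is $0$, and $H(x,0)=x$ is read as the identity on all of $C$. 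Continuity of $L$ on the closure of the region $\mu<1/2$ then needs care, and Lemma \ref{hco50} is designed exactly to handle this kind of piecewise definition with a collapse.

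We then check the required properties of $L$:
\begin{enumerate}
\item On $B\times I$, $K$ is stationary and $\mu=0$, so $\psi=1$. This gives $L(b,t)=H(b,\max(0,2t-1))$, which is $H$ up to the reparametrisation $t\mapsto \max(0,2t-1)$. To obtain literally $L|_{B\times I}=H$, I would instead run the two stages simultaneously, via $L(c,t) = H(K(c,t),\psi(c)t)$ where this is defined. Here $K(c,t)\in B$ is not guaranteed for intermediate $t$, so it is better to use $H(r_t(c),\cdot)$ with a $B$-valued path. Alternatively, the harmless approach is to note that the statement only requires $L|_{B\times I} = H$, and to replace $(H,\lambda)$ at the start by its reparametrised version. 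The cleanest route is to require $K$ to satisfy $K(c,t)\in B$ for $t\ge \mu(c)$-type normalisation (as in Lemma \ref{hco50}), and then define $L(c,t) = H(K(c,1), \psi(c)t)$ composed appropriately.
\item $L(a,t)=a$ for $a\in A$ holds because $A\subset B$.
\item $L(c,1)\in A$ when $\tau(c)<1$: in that case $\mu(c)<1/2$, so $\psi(c)>0$, $r(c)\in B$ and $\lambda(r(c))<1$. With the timing arranged so that $H$ runs to time $1$ on such $c$, this gives $H(r(c),1)\in A$.
\end{enumerate}

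The main obstacle is simultaneously achieving the exact equality $L|_{B\times I}=H$, rather than a reparametrisation, and continuity of $L$. My first attempt would use the normalisation of Lemma \ref{hco50}: replace $K$ by a homotopy with $K(c,s)=K(c,1)\in B$ for $s\ge 2\mu(c)$ when $\mu(c)<1/2$. Then define
\[
L(c,t)=H\bigl(K(c,\min(1,t/\theta(c))),\ \psi(c)\,\beta_c(t)\bigr),
\]
with $\theta(c)$ tending to $0$ as $\mu(c)\to 0$. On $B$ this reduces to $H(b,t)$, and continuity at $B$ would be verified via Lemma \ref{hco50}.
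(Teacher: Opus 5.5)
Your $\tau$ is fine; it is essentially the paper's $\tau=\min\{1,\max\{2\mu,\lambda(K(-,1))\}\}$ with a sum in place of the max. The gap is that the proposal never actually produces $L$, and constructing $L$ is the whole content of the lemma.

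The one formula you write down completely is $L(c,t)=H\bigl(K(c,\min(1,2t)),\psi(c)\max(0,2t-1)\bigr)$ with $\psi=\max(0,1-2\mu)$. It fails two requirements:
\begin{enumerate}
\item On $B$ it gives $H(b,\max(0,2t-1))\neq H(b,t)$.
\item For $0<\mu(c)<1/2$ it gives $L(c,1)=H(r(c),1-2\mu(c))$, which need not lie in $A$ even though $\tau(c)$ can be $<1$ there. You need $\psi\equiv 1$ on all of $\{\tau<1\}$.
\end{enumerate}
Your escape of replacing $(H,\lambda)$ by a reparametrised pair is not available. The lemma must extend the \emph{given} $H$; this is exactly how it is used in Theorem \ref{hco15}, where the NDR pairs must be compatible along the cell filtration to pass to the colimit. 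Nor can a reparametrisation help: no $H'$ satisfies $H'(b,\max(0,2t-1))=H(b,t)$, since the right side generally moves for $t<1/2$.

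Your final template $H\bigl(K(c,\min(1,t/\theta(c))),\psi(c)\beta_c(t)\bigr)$ points in the right direction, but $\theta$, $\beta_c$ and $\psi$ are left unspecified. They must be chosen compatibly, and none of the following is checked:
\begin{itemize}
\item $\beta_c$ must vanish until the $K$-path has entered $B$.
\item $\psi(c)\beta_c(1)=1$ wherever $\tau(c)<1$.
\item $\psi=0$ where $K(c,1)$ need not lie in $B$.
\item $L$ must be continuous across $\mu=0$ and across the region where $\psi$ drops from $1$ to $0$.
\end{itemize}

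The missing concrete step is the paper's single reparametrisation.
\begin{enumerate}
\item Use Lemma \ref{hco50} to normalise $K$ so that $K(c,t)=c$ for $t\le 1-\mu(c)$. All the motion of $K$ is then squeezed into the last $\mu(c)$ of time.
\item Concatenate $K$ and $H$ into $E:C\times I\cup D\times[0,2]\to C$, where $D=\overline{\mu^{-1}([0,1))}$, so that $K(c,1)\in B$ on $D$.
\item Set $\tilde\mu=\max(0,2\mu-1)$ and
\[
L(c,t)=E\bigl(c,(1-t)(1-\mu(c))+t(2-\tilde\mu(c))\bigr).
\]
\end{enumerate}
Each requirement then follows directly:
\begin{itemize}
\item The starting time $1-\mu(c)$ gives $L(c,0)=c$ by the normalisation.
\item On $B$ the time interval is exactly $[1,2]$, so $L|_{B\times I}=H$ on the nose.
\item For $\mu\le 1/2$ the end time is $2$, so $H$ runs to completion wherever $\tau<1$.
\item Where $\mu=1$ the end time is $1$, so $H$ is never evaluated off $B$.
\end{itemize}
This is precisely the ``$K$-stage shrinks to length zero as $\mu\to 0$'' mechanism you were reaching for. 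The paper resolves all the constraints at once by one linear change of time, instead of leaving several auxiliary functions to be tuned.
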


\begin{proof} Let $(K,\mu)$ be an $G$-NDR-pair representing $j$ such that $K(c,t) = K(c,0)$ for all $t \leq 1 - \mu(c)$, which is possible by Lemma \ref{hco50}. Let $D$ be the closure of $\mu^{-1}([0,1))$ and define a map $E: C \times I \cup D \times [0,2] \to C$ by the concatenation of $K$ and $H$. Define:
	
\[ \tilde{\mu}(c) =	
\begin{cases}
		0 & \text{if} \ \ \mu(c) \leq \frac{1}{2} \\
		2(\mu(c) - \frac{1}{2}) & \text{if} \ \ \mu(c) \geq \frac{1}{2}
	\end{cases} \]

Now define $L(c,t) = E(c, (1-t)(1- \mu(c)) + t(2 - \tilde{\mu}(c)))$ and $\tau(c) = \min \{1, \max \{2\mu(c), \lambda(K(c,1))\} \}$, if $\mu(c) < 1$, and $\tau(c) = 1$ otherwise.
\end{proof}

\begin{lemma} \label{hco52}
	Given a pushout square:
	
		\[
	\begin{tikzcd}
		A \arrow{r}{f} \arrow[swap]{d}{i} & C \arrow{d}{j} \\
		B \arrow[swap]{r}{g} & D 
	\end{tikzcd}
	\]
	
	where $i$ is an $hG$-cofibration represented by the $G$-NDR-pair $(H, \lambda)$, then there exists a $G$-NDR-pair $(K, \mu)$ representing $j$ as an $hG$-cofibration such that $K \circ (g \times 1) = H$ and $\mu g = \lambda$.
\end{lemma}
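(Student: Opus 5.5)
We must prove Lemma \ref{hco52}: given the pushout $D = C \cup_A B$ of $i$ along $f$, with $(H,\lambda)$ a $G$-NDR-pair for $(B,A)$, we want a $G$-NDR-pair $(K,\mu)$ for $(D,C)$ compatible with $g$. The plan is to define $\mu$ and $K$ directly on the two pieces of the pushout and check that they glue. We will do this both for $D$ and for $D \times I$, using that $-\times I$ preserves pushouts in CGWH spaces because $I$ is compact Hausdorff.

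\textbf{Defining $\mu$.} The map $\mu: D \to I$ is determined by $\mu \circ j = 0$ on $C$ and $\mu \circ g = \lambda$ on $B$. These agree on $A$ because $\lambda|_A = 0$, so $\mu$ is a well-defined continuous $G$-map by the universal property of the pushout. It is equivariant because $\lambda$ and the constant map are.

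\textbf{Defining $K$.} The map $K: D \times I \to D$ is given by:
\begin{itemize}
\item $j \circ \pi_C$ on $C \times I$, i.e.\ the constant homotopy;
\item $g \circ H$ on $B \times I$.
\end{itemize}
These agree on $A \times I$ because $H(a,t) = a$ for $a \in A$, so $gH(a,t) = g(i(a)) = j(f(a))$. Since $D \times I$ is the pushout of $C \times I \leftarrow A \times I \to B \times I$, the map $K$ is well defined and continuous, and $K \circ (g \times 1) = gH$ by construction.

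There is a discrepancy to note here. The statement literally reads $K \circ (g \times 1) = H$, which only typechecks if $B$ is identified with its image; it should be read as $gH$, as in the intended use in Theorem \ref{hco15}.

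\textbf{Checking the NDR axioms.} Each axiom is checked pointwise on the two pieces.
\begin{itemize}
\item $K(d,0) = d$: this holds on both pieces.
\item $K(d,t) = d$ for $d \in C$: this holds by the definition on $C \times I$.
\item $\mu^{-1}(0) = C$: this needs care. On the image of $B \setminus A$, $\mu = \lambda$ vanishes exactly on $A$, and $g$ maps $B \setminus A$ bijectively onto $D \setminus j(C)$. Hence $\mu^{-1}(0) = j(C)$, assuming the pair convention $\lambda^{-1}(0) = A$. If the convention is instead $A \subset \lambda^{-1}(0)$, the same argument gives the analogous inclusion.
\item $K(d,1) \in C$ whenever $\mu(d) < 1$: for $d = g(b)$ with $\lambda(b) < 1$ we have $H(b,1) \in A$, so $K(d,1) = g(H(b,1)) = j(f(H(b,1))) \in j(C)$. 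Points of $C$ are trivially fine.
\end{itemize}

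\textbf{Main obstacle.} The only real obstacle is point-set. We need that $D \times I$ is really the pushout in CGWH, and that $j$ is a closed inclusion so that $C$ can be identified with a subspace of $D$. The first holds because $I$ is locally compact Hausdorff, so $-\times I$ is a left adjoint. The second is standard for pushouts of closed cofibrations in CGWH.
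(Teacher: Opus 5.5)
Your proposal is correct and follows the paper's own argument. You define $\mu$ by $\mu j = 0$ and $\mu g = \lambda$, define $K$ by $K(j\times 1) = j\pi_C$ and $K(g\times 1) = gH$, glue both via the universal property of the pushout (using that $-\times I$ preserves pushouts), and then check the NDR axioms. Your reading of the stated relation $K\circ(g\times 1)=H$ as $gH$ is also correct; it, together with the axiom check, just fills in details the paper leaves implicit.
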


\begin{proof}
	We can define $K$ using the relations $K \circ (g \times 1) = H$ and $K \circ (j \times 1) = \pi_C$ and the universal property of pushouts. Similarly for $\mu$.
\end{proof}

We also record the following lemma which follows from the fact that the $\Sigma_n$-equivariant NDR-pair given in the proof of \cite[Lemmma A.4]{M72} is also $G^n$-equivariant:

\begin{lemma} \label{hco56}
	If $i:A \to B$ is an $hG$-cofibration, then the $n$-fold pushout-product $i^{\hat{\times}n} : A \times B^{n-1} \cup ... \cup B^{n-1} \times A \to B^n$ is an $h(G \wr \Sigma_n)$-cofibration.
\end{lemma}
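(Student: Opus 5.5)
The plan is to use the $G$-NDR-pair description of $hG$-cofibrations (as in \cite[Lemma A.4]{M72}) and to write down an explicit NDR-pair for the $n$-fold pushout-product that is equivariant for the full wreath product $G \wr \Sigma_n = G^n \rtimes \Sigma_n$.

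First, let $(H,\lambda)$ represent $(B,A)$ as a $G$-NDR-pair. Thus $\lambda\colon B \to I$ is $G$-invariant with $A = \lambda^{-1}(0)$, and $H\colon B \times I \to B$ is a $G$-homotopy rel $A$ starting at the identity with $H(b,1) \in A$ whenever $\lambda(b) < 1$. Using Lemma \ref{hco50}, I would first normalise $H$ so that $H(b,t) = H(b,1)$ for $t \ge \lambda(b)$ (or some similar normalisation). Then I would define
\[
\mu(b_1,\dots,b_n) = \min_k \lambda(b_k).
\]
This vanishes exactly on the subspace $Q = A \times B^{n-1} \cup \dots \cup B^{n-1} \times A$. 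It is invariant under $G^n$, since each $\lambda$ is $G$-invariant, and under $\Sigma_n$, since $\min$ is symmetric.

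Next I would define the homotopy by May's formula. Put $K(b_1,\dots,b_n,t) = (H(b_1,t_1),\dots,H(b_n,t_n))$, where each $t_k$ is a continuous symmetric function of $t$ and of $(\lambda(b_1),\dots,\lambda(b_n))$. For example, take $t_k = t\cdot \min\{1, \mu(b)/\lambda(b_k)\}$ in a suitably continuous form, or use the formula in \cite[Lemma A.4]{M72} directly. The times should be chosen so that:
\begin{itemize}
\item the coordinate realising the minimum gets pushed fully into $A$ when $\mu < 1$;
\item the homotopy is stationary on $Q$;
\item the times depend on the $k$th coordinate only through $\lambda(b_k)$ and the symmetric data.
\end{itemize}
The only point is equivariance. $G^n$ acts coordinatewise, each $H$ is $G$-equivariant, and each $\lambda$ is invariant, so the $k$th component $H(b_k,t_k)$ commutes with the action of $g_k$. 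A permutation $\sigma$ permutes the coordinates together with their times, because the rule assigning $t_k$ is the same symmetric function for every $k$. Hence $K$ commutes with $(g_1,\dots,g_n;\sigma)$.

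In effect, May's $\Sigma_n$-equivariant NDR-pair is built functorially from $(H,\lambda)$ using only coordinatewise applications of $H$ and $\lambda$. So its $G^n$-equivariance comes for free once $H$ is $G$-equivariant and $\lambda$ is $G$-invariant, and I would present the proof as exactly that observation.

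The main obstacle is continuity of the time functions near points where some $\lambda(b_k) = 0$ or where the minimum switches between coordinates. This is handled either by the normalisation from Lemma \ref{hco50} or by citing May's verification verbatim, since the formulas are unchanged.
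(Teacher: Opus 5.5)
Your proposal is correct and is the paper's argument: the paper notes that May's $\Sigma_n$-equivariant NDR-pair from \cite[Lemma A.4]{M72}, which is built coordinatewise from $(H,\lambda)$, is automatically $G^n$-equivariant, and that is exactly your key observation. Your explicit times $t_k = t\,\mu(b)/\lambda(b_k)$ work as written, with continuity at points where some $\lambda(b_k)=0$ coming from the stationarity of $H$ on $A$ via Lemma \ref{hco50}, so the preliminary normalisation of $H$ is not needed.
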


	\bibliography{References}
	\bibliographystyle{alpha}

\end{document}